\author{Tuomas Orponen}
\title{On arithmetic sums of Ahlfors-regular sets}
\address{Department of Mathematics and Statistics\\ University of Jyv\"askyl\"a,
P.O. Box 35 (MaD)\\
FI-40014 University of Jyv\"askyl\"a\\
Finland}
\email{tuomas.t.orponen@jyu.fi}
\date{\today}
\subjclass[2010]{11B30 (primary) 28A80 (secondary)}
\keywords{Ahlfors-regular sets, sum-product problem, Hausdorff dimension}
\thanks{T.O. is supported by the Academy of Finland via the projects \emph{Quantitative rectifiability in Euclidean and non-Euclidean spaces} and \emph{Incidences on Fractals}, grant Nos. 309365, 314172, 321896. T.O. is also supported by the University of Helsinki via the project \emph{Quantitative rectifiability of sets and measures in Euclidean spaces and Heisenberg groups}, project No. 7516125.}
\newcommand{\R}{\mathbb{R}}
\newcommand{\N}{\mathbb{N}}
\newcommand{\Z}{\mathbb{Z}}
\newcommand{\tn}{\mathbb{P}}
\newcommand{\calD}{\mathcal{D}}
\newcommand{\calH}{\mathcal{H}}
\newcommand{\calS}{\mathcal{S}}
\newcommand{\spt}{\operatorname{spt}}
\newcommand{\Hd}{\dim_{\mathrm{H}}}
\newcommand{\diam}{\operatorname{diam}}
\newcommand{\dist}{\operatorname{dist}}
\newcommand{\m}{\mathfrak{m}}
\newcommand{\M}{\mathfrak{M}}
\def\Barint_#1{\mathchoice
          {\mathop{\vrule width 6pt height 3 pt depth -2.5pt
                  \kern -8pt \intop}\nolimits_{#1}}%
          {\mathop{\vrule width 5pt height 3 pt depth -2.6pt
                  \kern -6pt \intop}\nolimits_{#1}}%
          {\mathop{\vrule width 5pt height 3 pt depth -2.6pt
                  \kern -6pt \intop}\nolimits_{#1}}%
          {\mathop{\vrule width 5pt height 3 pt depth -2.6pt
                  \kern -6pt \intop}\nolimits_{#1}}}
\numberwithin{equation}{section}
\theoremstyle{plain}
\newtheorem{thm}[equation]{Theorem}
\newtheorem*{"thm"}{"Theorem"}
\newtheorem{conjecture}[equation]{Conjecture}
\newtheorem{lemma}[equation]{Lemma}
\newtheorem{ex}[equation]{Example}
\newtheorem{proposition}[equation]{Proposition}
\newtheorem{question}{Question}
\theoremstyle{definition}
\newtheorem{definition}[equation]{Definition}
\newtheorem{notation}[equation]{Notation}
\theoremstyle{remark}
\newtheorem{remark}[equation]{Remark}
\newtheorem{warning}[equation]{Warning}
\newcommand{\nref}[1]{(\hyperref[#1]{#1})}
\DeclareMathSymbol{\intop}  {\mathop}{mathx}{"B3}
\begin{document} 

\begin{abstract} Let $A,B \subset \R$ be closed Ahlfors-regular sets with dimensions $\Hd A =: \alpha$ and $\Hd B =: \beta$. I prove that 
\begin{displaymath} \Hd [A + \theta B] \geq \alpha + \beta \cdot \tfrac{1 - \alpha}{2 - \alpha} \end{displaymath}
for all $\theta \in \R \, \setminus \, E$, where $\Hd E = 0$.
\end{abstract}

\maketitle

\tableofcontents

\section{Introduction}

This paper contains the following sum-product result for Ahlfors-regular subsets of $\R$: 
\begin{thm}\label{mainNonTec} Let $A,B \subset \R$ be closed Ahlfors-regular sets with $\Hd A = \alpha \in [0,1]$ and $\Hd B = \beta \in [0,1]$. Then 
\begin{displaymath} \Hd [A + \theta B] \geq \alpha + \beta \cdot \tfrac{1 - \alpha}{2 - \alpha} \end{displaymath}
for all $\theta \in \R \, \setminus \, E$, where $\Hd E = 0$. \end{thm}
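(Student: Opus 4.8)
The plan is to argue by contradiction, reduce to a $\delta$-discretized incidence estimate for the product set $A\times B$, and extract the exponent from a multiscale counting argument that exploits the Ahlfors-regularity of \emph{both} factors.

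\emph{Setup and reduction.} Suppose the conclusion fails: for some $\gamma < \alpha + \beta\tfrac{1-\alpha}{2-\alpha}$ the set $E = \{\theta \in \R : \Hd[A + \theta B] \le \gamma\}$ has positive Hausdorff dimension, and fix a Frostman measure $\nu$ on $E$ with some exponent $\kappa > 0$. Write $\sigma_\theta(x,y) = x + \theta y$, so $A + \theta B = \sigma_\theta(A\times B)$, and let $\mu_A = \calH^\alpha|_A$, $\mu_B = \calH^\beta|_B$; these are Ahlfors-regular by hypothesis, so $\mu_A \times \mu_B$ is $(\alpha+\beta)$-Ahlfors-regular on $A\times B$ and in particular $\Hd(A\times B) = \alpha+\beta$, with the product set genuinely carrying the expected dimension. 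For every $\theta \in E$ the pushforward $(\sigma_\theta)_\ast(\mu_A\times\mu_B)$ is supported on a set of dimension $\le\gamma$. A routine pigeonholing-and-zooming step — choosing a small dyadic scale $\delta$, pigeonholing the magnitude of $\theta$, passing to discretizations $\mathbf A \subset A$, $\mathbf B \subset B$ that remain Ahlfors-regular at all scales in $[\delta,1]$ with $\#\mathbf A \approx \delta^{-\alpha}$, $\#\mathbf B \approx \delta^{-\beta}$, and to a $\delta$-separated $\mathbf\Theta \subset E$ with $\#\mathbf\Theta \gtrsim \delta^{-\kappa}$ — reduces everything to the statement that the bound $|\mathbf A + \theta\mathbf B|_\delta \le \delta^{-\gamma}$ holding for all $\theta\in\mathbf\Theta$ forces $\gamma \ge \alpha + \beta\tfrac{1-\alpha}{2-\alpha} - o_{\delta\to 0}(1)$.

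\emph{Dual formulation.} For each $\theta\in\mathbf\Theta$, the level lines $\{x + \theta y = c\}$, $c\in\mathbf A + \theta\mathbf B$, are $\le\delta^{-\gamma}$ parallel families whose $\delta$-neighbourhoods cover $\mathbf P := \mathbf A\times\mathbf B$; thus $\mathbf P$ (of cardinality $\approx\delta^{-\alpha-\beta}$) is covered by at most $\delta^{-\gamma}$ tubes of width $\delta$ in each of $\gtrsim\delta^{-\kappa}$ distinct directions, and the whole point is that a product of Ahlfors-regular sets cannot be covered so efficiently in so many directions. Quantitatively, by Cauchy–Schwarz the number of $\theta$-collinear quadruples $N(\theta) := \#\{(a,a',b,b')\in\mathbf A^2\times\mathbf B^2 : |(a-a') - \theta(b-b')|\lesssim\delta\}$ satisfies $N(\theta) \gtrsim \delta^{-\gamma}\bigl(\delta^{\gamma-\alpha-\beta}\bigr)^2 = \delta^{\gamma-2\alpha-2\beta}$ for every $\theta\in\mathbf\Theta$, whereas integrating first in $\theta$ gives $\int N(\theta)\,d\nu(\theta) = \sum_{a,a',b,b'}\nu\{\theta : |\theta - \tfrac{a-a'}{b-b'}|\lesssim\tfrac{\delta}{|b-b'|}\} \lesssim \delta^{\kappa}\,\#\mathbf A^2\sum_{b\ne b'}|b-b'|^{-\kappa} \lesssim \delta^{\kappa-2\alpha-2\beta}$, using the Frostman bound for $\nu$ and the $\beta$-regularity of $\mathbf B$. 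Comparing these on a $\nu$-typical $\theta$ yields only $\gamma\gtrsim\kappa$, which is useless since $\kappa$ may be arbitrarily small; and the analogous $L^2$/Fourier computation resting on $\widehat{(\sigma_\theta)_\ast(\mu_A\times\mu_B)}(\xi) = \widehat{\mu_A}(\xi)\,\widehat{\mu_B}(\theta\xi)$ is lossy in exactly the same way.

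\emph{The crux and main obstacle.} The real work — and where I expect the difficulty to concentrate — is to run this comparison without the global Cauchy–Schwarz (equivalently, Fourier) loss. The idea is to split $N(\theta)$ according to the common dyadic size $2^{-j}$ of $|a-a'|$ and $|b-b'|$ (comparable once the magnitude of $\theta$ is normalized), and to bound each piece by controlling, for a $\nu$-typical $\theta$ and each scale $2^{-j}$, how many of the $\approx (2^{-j}/\delta)^\beta$ admissible increments $b-b'$ can be matched by an increment $a-a'\in A-A$: here one uses the Ahlfors $\alpha$-regularity of $A$ to see that, away from a set of $\theta$ of dimension $0$, the dilate $\theta(\mathbf B-\mathbf B)$ meets $\mathbf A-\mathbf A$ at scale $\delta$ only on a $(2^{-j}/\delta)^{\alpha-1}$-proportion of such increments rather than on a positive proportion. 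Summing the scale-by-scale bounds and optimizing the split is designed to produce exactly the deficit $\tfrac{\beta}{2-\alpha}$, i.e.\ the lower bound $|\mathbf A + \theta\mathbf B|_\delta \gtrsim \delta^{-(\alpha+\beta)+\beta/(2-\alpha)} = \delta^{-\alpha-\beta\frac{1-\alpha}{2-\alpha}}$ (the algebraic identity $\alpha+\beta\tfrac{1-\alpha}{2-\alpha} = \alpha+\beta-\tfrac{\beta}{2-\alpha}$ is what makes the target exponent natural, and morally it says the generic fibre of $\sigma_\theta|_{A\times B}$ has dimension at most $\tfrac{\beta}{2-\alpha}$). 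Making this multiscale count rigorous — most plausibly as an induction on scales, or alternatively by quoting a discretized incidence / sum–product theorem as a black box after verifying that $\mathbf A\times\mathbf B$ satisfies the required non-concentration hypotheses with the sharp exponents — is the heart of the proof; the reduction and the final passage back to Hausdorff dimension are standard.
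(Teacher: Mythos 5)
Your setup, reduction to a $\delta$-discretised statement, and the observation that a direct Cauchy--Schwarz/Fourier energy bound only yields $\gamma \gtrsim \kappa$ are all on the right track, and the target identity $\alpha + \beta\tfrac{1-\alpha}{2-\alpha} = \alpha+\beta - \tfrac{\beta}{2-\alpha}$ is indeed the correct heuristic. However, the ``crux'' paragraph contains a genuine gap rather than a deferred computation. The claim that Ahlfors $\alpha$-regularity of $A$ forces the dilate $\theta(\mathbf B-\mathbf B)$ to meet $\mathbf A - \mathbf A$ at scale $\delta$ only on a $(2^{-j}/\delta)^{\alpha-1}$-proportion of increments, for $\theta$ outside a zero-dimensional set, is not something the regularity of $A$ can give you scale-by-scale: Ahlfors regularity does not control the additive structure of $A-A$. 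The paper's Example \ref{ex:An} (the sets $A_N$) makes this concrete: each $A_N$ is $\tfrac12$-regular, yet $A_N - A_N$ is essentially an arithmetic progression and $\Hd[A_N+A_N]\to\tfrac12$. Any single-pass multiscale count of matched increments, or any off-the-shelf discretised sum--product/incidence theorem, necessarily produces an improvement $\epsilon$ that degrades with the regularity constant of $A$ and therefore cannot yield a bound that is uniform in $N$, as the theorem requires. The claimed proportion bound is in effect a restatement of the conclusion, not a consequence of regularity.

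The missing mechanism is a bootstrap on the exponent rather than an induction on scales. The paper proves a quantified version (Theorem \ref{main}) in which one assumes, at level $\sigma$, that the exceptional set of directions has $\mathcal{H}^\tau_\infty$-content $\leq \delta^{\eta_0}$, and shows that the exceptional set at the improved level $\sigma-\zeta$ has content $\leq\delta^\eta$. Crucially, the improvement uses the smallness of the exceptional set itself: one picks two bad directions $\theta_0,\theta_1$ close together, rewrites the data inside a $\bar\delta^{1/2}$-tube $\mathbf T_0$ as an approximate product $A'\times B'$ with $A'=\pi_{\theta_0}((A\times B)\cap\mathbf T_0)$, and feeds $N_{\bar\delta}(A'+\theta B')\approx N_{\bar\delta}(A')$ into Shmerkin's inverse theorem (Theorem \ref{shmerkin}) to conclude that $A'$ has full branching on a positive fraction of scales. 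The contradiction then comes from Proposition \ref{branchProp}: because $A'$ is a union of fibres of $A$, Ahlfors regularity of $A$ forces $A'$ to have roughly $\alpha$-dimensional branching on \emph{all} scales, which is incompatible with the forced full branching once $\sigma$ has not yet reached $\beta/(2-\alpha)$. Iterating this $\sim\epsilon^{-1}$ times removes the dependence on the regularity constant and produces the zero-dimensional exceptional set. Without the iteration that tracks the Hausdorff content of $E$ at each step, and without an inverse theorem to turn the ``no doubling'' hypothesis $N_{\bar\delta}(A'+\theta B')\approx N_{\bar\delta}(A')$ into branching structure, your multiscale count cannot be completed to the stated exponent.
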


The paper also contains a $\delta$-discretised version of Theorem \ref{mainNonTec}, see Theorem \ref{main}, and Remark \ref{rem:singleScale}. I will next review other statements of similar nature in previous literature. Three close relatives are Bourgain's discretised sum-product theorem \cite{Bo2}, Dyatlov and Zahl's estimate \cite{MR3558305} for the additive energy of Ahlfors-regular sets, and Hochman's projection theorem \cite{Ho} for self-similar sets.

Bourgain in \cite{Bo2}, see also his earlier paper \cite{Bo1}, proved lower bounds for the dimension of $A + \theta B$ without assuming that $A,B$ are Ahlfors-regular. He showed that if $\alpha = \beta \in (0,1)$, and $E \subset \R$ has dimension $\Hd E \geq \kappa > 0$, then 
\begin{equation}\label{bourgain} \Hd [A + \theta B] \geq \alpha + \epsilon \end{equation}
for some $\theta \in E$, where $\epsilon > 0$ only depends on $\alpha = \beta$, and $\kappa > 0$. Without the Ahlfors-regularity assumption, the dependence of "$\epsilon$" on "$\kappa$" in \eqref{bourgain} is inevitable, see the next example. As Theorem \ref{mainNonTec} shows, the dependence disappears for Ahlfors-regular sets. 

\begin{ex}\label{ex:arithmetic} Fix $n \in \N$ and $\kappa \in (0,\tfrac{1}{2}]$ such that $n^{2\kappa} \in \N$. Let $A_{n} := \{k/n : 1 \leq k \leq n\}$ and $E_{n} := \{k/n^{2\kappa} : 1 \leq k \leq n^{2\kappa}\}$. Then $A_{n} + E_{n}A_{n} \subset \{k/n^{1 + 2\kappa} : 1 \leq k \leq 2n^{1 + 2\kappa}\}$. Thus, if $\delta \in (0,1)$, and $N_{\delta}(A)$ is the $\delta$-covering number of $A$, we have 
\begin{equation}\label{form146} N_{\delta}(A_{n} + E_{n}A_{n}) \leq 2n^{1 + 2\kappa}, \qquad c \in E_{n}. \end{equation}
For $0 < \delta \leq n^{-1 - 2\kappa}$, the same inequality remains true, up to an absolute constant, if we replace $A_{n},E_{n}$ by the $\delta$-neighbourhoods $A_{n}(\delta)$ and $E_{n}(\delta)$. Now, choose $\delta := n^{-2}$. Then, if $\mathcal{H}^{s}_{\infty}$ stands for $s$-dimensional Hausdorff content, it is easy to check that
\begin{displaymath} \mathcal{H}^{1/2}_{\infty}(A_{n}(n^{-2})) \gtrsim 1 \quad \text{and} \quad \mathcal{H}^{\kappa}_{\infty}(E_{n}(n^{-2})) \gtrsim 1. \end{displaymath}
Therefore, informally speaking, $A_{n}(n^{-2})$ is the $n^{-2}$-neighbourhood of a $\tfrac{1}{2}$-dimensional set, and $E_{n}(n^{-2})$ is the $n^{-2}$-neighbourhood of a $\kappa$-dimensional set. Noting that $n^{1 + 2\kappa} = (n^{-2})^{1/2 + \kappa}$, the inequality \eqref{form146} roughly says that $A_{n}(n^{-2}) + E_{n}(n^{-2})A_{n}(n^{-2})$ is at most $(\tfrac{1}{2} + \kappa)$-dimensional. 

The construction described above can be "iterated" to produce compact, Cantor type, non-Ahlfors-regular sets $A,E_{\kappa} \subset [0,1]$ such that $\Hd A = \tfrac{1}{2}$, and $\Hd E_{\kappa} = \kappa$, and $\Hd (A + E_{\kappa}A) \leq \tfrac{1}{2} + \kappa$. In particular, $\lim_{\kappa \searrow 0} \dim (A + E_{\kappa}A) = \dim A$. This shows that the Ahlfors-regularity assumption in Theorem \ref{mainNonTec} is necessary, even when $A = B$. The "iteration" procedure is explained further in \cite[Example 4.1]{MR3503710} in a slightly different, but very similar, context.
 \end{ex}
 
We have now seen that the size of "$\epsilon$" in \eqref{bourgain} depends on "$\kappa$", and this dependence vanishes for Ahlfors-regular sets (as long as $\kappa > 0$). Theorem \ref{mainNonTec} has another key difference compared to Bourgain's result: Bourgain initially proved \eqref{bourgain} only in the case $\alpha = \Hd A = \Hd B = \beta$, while Theorem \ref{mainNonTec} makes no assumptions on the relative sizes of $\alpha$ and $\beta$. A variant of Bourgain's result \eqref{bourgain} (for general sets) is simply false in the total absence of such assumptions: it is not difficult to construct (non-Ahlfors-regular) sets $A,B \subset [0,1]$ of dimensions $\Hd A = \tfrac{1}{2}$ and $\Hd B = \tfrac{1}{4}$ such that $\Hd [A + \theta B] = \tfrac{1}{2}$ for all $\theta \in E$, where $\Hd E = \tfrac{1}{4}$. The construction is based on arithmetic progressions, just like Example \ref{ex:arithmetic}. If $A_{n} = \{k/\sqrt{n} : 1 \leq k \leq \sqrt{n}\}$ and $B_{n} = E_{n} = \{k/\sqrt[4]{n} : 1 \leq k \leq \sqrt[4]{n}\}$ for some $n \in \N$ such that $\sqrt[4]{n} \in \N$, then 
\begin{itemize}
\item $\mathcal{H}^{1/2}_{\infty}(A_{n}(n^{-1})) \sim 1$,
\item $\mathcal{H}^{1/4}_{\infty}(B_{n}(n^{-1})) = \mathcal{H}^{1/4}_{\infty}(E_{n}(n^{-1})) \sim 1$, and
\item $N_{n^{-1}}(A_{n}(n^{-1}) + E_{n}(n^{-1})B_{n}(n^{-1})) \sim \sqrt{n} = N_{n^{-1}}(A_{n}(n^{-1}))$.
\end{itemize}
The last point is the scale $n^{-1}$ analogue of the equation $\Hd (A + EB) = \Hd A$. Again, an iterative construction as in \cite[Example 4.1]{MR3503710} is required to pass from the approximating sets $A_{n},E_{n},B_{n}$ to $A,E,B$. 

While Bourgain's result \eqref{bourgain} was initially proven only in the case $\alpha = \beta$, the case $\alpha \neq \beta$ has been recently studied in \cite{2021arXiv211002779O}. Based on evidence in finite fields \cite{2018arXiv180109591O}, the following conjecture seems plausible for general sets:
\begin{conjecture} Let $\kappa > 0$, and let $A,B,E \subset [0,1]$ be compact sets with $\Hd B \geq \kappa$, $\Hd B + \Hd E \geq \Hd A + \kappa$, and $\Hd A \leq 1 - \kappa$. Then \eqref{bourgain} holds for some $\theta \in E$, and for some $\epsilon > 0$ depending only on $\kappa$. \end{conjecture}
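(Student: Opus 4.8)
The plan is to reduce the conjecture to a $\delta$-discretised single-scale inequality and then attack that by the asymmetric sum--product machinery foreshadowed by the finite-field analogue \cite{2018arXiv180109591O}. A harmless reduction comes first: if $\Hd B \ge \Hd A + \kappa$, then for any $\theta \in E \setminus \{0\}$ (such $\theta$ exists under the evidently necessary extra hypothesis $E \ne \{0\}$) the set $A + \theta B$ contains the dilated translate $a_{0} + \theta B$ of $B$, whence $\Hd[A + \theta B] \ge \Hd B \ge \Hd A + \kappa$ and we are done with $\epsilon = \kappa$. So assume $\Hd B < \Hd A + \kappa$; the balance hypothesis then forces $\Hd E \ge \Hd A + \kappa - \Hd B > 0$. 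Now fix Frostman measures on $A$, $B$, $E$ with exponents arbitrarily close to $\Hd A =: s \le 1 - \kappa$, $\Hd B =: t \ge \kappa$ and $\Hd E =: u > 0$ (with $t + u \ge s + \kappa$), and pigeonhole over dyadic scales $\delta = 2^{-m}$. Mimicking the single-scale reduction behind Theorem \ref{main} --- and, since $A, B, E$ need not be Ahlfors-regular, adding the ``finitely many good scales, then iterate'' device of \cite[Example 4.1]{MR3503710} --- it suffices to produce $\epsilon = \epsilon(\kappa) > 0$ with the following property: whenever $\delta$ is small and $A, B, E$ are unions of $\delta$-intervals with $N_{\delta}(A) \sim \delta^{-s}$, $N_{\delta}(B) \sim \delta^{-t}$, $N_{\delta}(E) \sim \delta^{-u}$, subject to matching Frostman-type upper bounds at all intermediate scales, one has $N_{\delta}(A + \theta B) \ge \delta^{-\epsilon} N_{\delta}(A)$ for at least one $\theta \in E$.

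Suppose, towards a contradiction, that $N_{\delta}(A + \theta B) \le \delta^{-\epsilon} N_{\delta}(A)$ for every $\theta$ in some $E' \subseteq E$ with $N_{\delta}(E') \ge \delta^{\epsilon} N_{\delta}(E)$. The Pl\"unnecke--Ruzsa inequality, in the form valid for $\delta$-discretised sets, then yields $N_{\delta}\big((A - A) + \theta(B - B)\big) \lesssim \delta^{-O(\epsilon)} N_{\delta}(A)$ for each $\theta \in E'$, and a Balog--Szemer\'edi--Gowers step extracts proportional refinements of $A$ and $B$ for which this control persists. The pivotal algebraic identity is that, for $\theta_{1}, \theta_{2} \in E'$, $a_{1}, a_{2}, a_{3} \in A$ and $b_{1}, b_{2} \in B$,
\[ (a_{1} - a_{2}) + (\theta_{1} - \theta_{2})(b_{1} - b_{2}) = \big[(a_{1} + \theta_{1} b_{1}) - (a_{2} + \theta_{1} b_{2})\big] - \big[(a_{3} + \theta_{2} b_{1}) - (a_{3} + \theta_{2} b_{2})\big], \]
which places $(A - A) + (E' - E')(B - B)$, up to $\delta$-neighbourhoods and a factor $\delta^{-O(\epsilon)}$, inside a set of cardinality $\lesssim \delta^{-O(\epsilon)} N_{\delta}(A)$. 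In other words, the additive set $A$ --- of $\delta$-dimension at most $1 - \kappa$ --- essentially absorbs the \emph{product} set $(E' - E')(B - B)$, one factor of which has $\delta$-dimension $\gtrsim t - O(\epsilon) \ge \kappa - O(\epsilon)$ and which jointly carries $\delta$-dimension $\gtrsim t + u - O(\epsilon) \ge s + \kappa - O(\epsilon)$ in the sum--product sense.

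The engine of the proof is then a \emph{genuinely asymmetric} discretised sum--product (``$ABC$'') theorem over $\R$, of the following schematic shape: there is $\epsilon_{0} = \epsilon_{0}(\kappa) > 0$ such that if $X$ is a $\delta$-set with $N_{\delta}(X) \le \delta^{-(1 - \kappa)}$, and $Y$, $Z$ are non-concentrated $\delta$-sets with $N_{\delta}(Z) \ge \delta^{-\kappa}$ and $N_{\delta}(Y)\,N_{\delta}(Z) \ge \delta^{-\kappa} N_{\delta}(X)$, then $N_{\delta}\big(X + (Y - Y)(Z - Z)\big) \ge \delta^{-\epsilon_{0}} N_{\delta}(X)$ --- and, crucially, $\epsilon_{0}$ must not degrade as $N_{\delta}(Y)$ shrinks towards $1$. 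Applied with $X = A$, $Y$ a refinement of $E'$ and $Z$ a refinement of $B$, this contradicts the previous paragraph once $\epsilon$ in the assumed negation is taken small relative to $\epsilon_{0}(\kappa)$. The three hypotheses of the conjecture are precisely what this estimate consumes: $\Hd A \le 1 - \kappa$ gives $N_{\delta}(X) \le \delta^{-(1-\kappa)}$ (and is unavoidable, since a full-dimensional $A$ has $A + [0,1] = [0,O(1)]$ and admits no growth); $\Hd B \ge \kappa$ gives $N_{\delta}(Z) \ge \delta^{-\kappa}$, keeping the multiplicative factor $Z - Z$ active; and $\Hd B + \Hd E \ge \Hd A + \kappa$ gives the balance $N_{\delta}(Y)\,N_{\delta}(Z) \ge \delta^{-\kappa} N_{\delta}(X)$.

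The hard part will be establishing this asymmetric discretised sum--product theorem over $\R$ in the required strength --- $\epsilon_{0}$ depending only on $\kappa$, with no dependence on the individual dimensions of $X, Y, Z$ or on their regularity, and with the $Y$-factor allowed to be only $\rho$-dimensional for arbitrarily small $\rho > 0$ (as genuinely happens when $\Hd B$ is close to $\Hd A + \kappa$). In $\mathbb{F}_{p}$ the analogous inequality follows from point--line incidence bounds (Rudnev's theorem), for which there is no equally sharp continuous counterpart; over $\R$ one must instead re-run Bourgain's multiscale flattening/entropy machinery \cite{Bo2} in the asymmetric regime, and the delicate point is that in the structured failure case one must pin down which of $X, Y, Z$ carries the Freiman/arithmetic-progression structure --- this is where the balance hypothesis $\Hd B + \Hd E \ge \Hd A + \kappa$ has to be actively exploited rather than merely recorded (partial progress of this kind over $\R$ appears in \cite{2021arXiv211002779O}). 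Two secondary obstacles: the single-scale reduction must survive the possible wild irregularity of $A, B, E$ across scales (hence the iteration of \cite[Example 4.1]{MR3503710}, which costs additional $\epsilon$'s that must be absorbed), and one must check that the refinements emerging from the Balog--Szemer\'edi--Gowers step still obey the Frostman-type upper bounds needed to invoke the sum--product theorem. By contrast, the conjecture's request for only \emph{one} good $\theta$ is a real simplification over a full exceptional-set estimate: the argument can be run with a single fixed, positive-dimensional $E' \subseteq E$, with no need to quantify over all but a small-dimensional set of directions.
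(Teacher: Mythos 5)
The statement you are trying to prove is posed in the paper as a \emph{conjecture}: the author explicitly does not prove it, and only cites finite-field evidence (\cite{2018arXiv180109591O}) and partial progress (\cite{2021arXiv211002779O}) as motivation. So there is no proof in the paper to compare against, and the relevant question is whether your proposal actually closes the problem. It does not. Your entire argument funnels into the ``asymmetric discretised sum--product ($ABC$) theorem'' of the third paragraph, with $\epsilon_{0}$ depending only on $\kappa$, uniform as the dimension of the $Y$-factor tends to $0$, and with no regularity hypotheses beyond non-concentration. That statement is not available in the literature, and you say yourself that establishing it is ``the hard part.'' But it is not merely a hard lemma: it is essentially equivalent to (a mild reformulation of) the conjecture itself. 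The reductions you describe before it --- Frostman measures, pigeonholing a scale, Pl\"unnecke--Ruzsa and Balog--Szemer\'edi--Gowers, and the pivot identity (which does check out algebraically) --- are standard and transfer the problem faithfully, but they transfer it to an open problem of the same depth rather than to something known. So the proposal is a plausible research programme, not a proof; the genuine gap is the unproven engine, and nothing in the surrounding scaffolding reduces its difficulty.

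Two smaller points worth flagging if you pursue this. First, your opening reduction ($\Hd B \ge \Hd A + \kappa$ implies the claim trivially for any $\theta \in E \setminus \{0\}$) silently adds the hypothesis $E \neq \{0\}$, which the conjecture as stated does not grant; you should note that the conclusion is vacuously false for $E = \{0\}$ and that the conjecture implicitly assumes $\Hd E > 0$ or at least $E \not\subset \{0\}$ in that regime. Second, the single-scale reduction for completely general compact sets is more delicate than for Ahlfors-regular ones: the paper's own Example 4.1-style constructions show that Hausdorff dimension of irregular sets is governed by a sparse set of scales, so ``pigeonhole a good scale $\delta$ and prove a covering-number inequality there'' must be set up so that the chosen scale simultaneously sees the mass of $A$, $B$, \emph{and} $E$ with the right exponents --- three independent pigeonholings that need not be compatible. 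This is a known technical obstruction (it is why Bourgain's projection theorem requires genuine multiscale analysis rather than a single-scale statement), and your sketch does not address it.
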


To understand why the lower bound in Theorem \ref{mainNonTec} might be plausible -- while a similar statement for non-Ahlfors-regular sets fails in multiple ways -- one should next compare Theorem \ref{mainNonTec} to the bound of Dyatlov and Zahl \cite{MR3558305} on the additive energy of Ahlfors-regular sets. In \cite[Theorem 6]{MR3558305}, it was shown that if $A \subset [0,1]$ is an Ahlfors-regular set with $\Hd A = \alpha \in (0,1)$, then there exists a constant $\epsilon > 0$, which depends on both "$\alpha$" and the Ahlfors-regularity constant of $A$, such that
\begin{displaymath} (\mathcal{H}^{\alpha})^{4}(\{(x_{1},x_{2},x_{3},x_{4}) \in A^{4} : |(x_{1} + x_{2}) - (x_{3} + x_{4})| < \delta\}) \lesssim \delta^{\alpha + \epsilon}, \quad 0 < \delta < 1. \end{displaymath}
For more recent improvements and generalisations of the work of Dyatlov and Zahl, see the papers \cite{MR4163999} by Rossi and Shmerkin, and \cite{2020arXiv201202747C} by Cladek and Tao. It follows from the estimate above that $\Hd [A + A] \geq \alpha + \epsilon$. More generally, one can show that $\Hd [A + \theta A] \geq \alpha + \epsilon$ for \textbf{every} $\theta \in [\tfrac{1}{2},1]$, where $\epsilon$ only depends on $\alpha \in (0,1)$, and the regularity constant of $A$. Does this imply that the zero-dimensional exceptional set mentioned in Theorem \ref{mainNonTec} is not really necessary? No, because the constant "$\epsilon$" in the uniform lower bound essentially depends on the regularity constant of "$A$". 

\begin{ex}\label{ex:An} Consider a $\tfrac{1}{2}$-dimensional self-similar set $A_{N}$ on $[0,1]$ with "$N$" generating intervals of common length $N^{-2}$, placed in arithmetic progression. Then, $A_{N}$ is $\tfrac{1}{2}$-Ahlfors-regular for every $N \geq 1$, but the constant increases when $N \to \infty$, and also $\Hd [A_{N} + A_{N}] \to \tfrac{1}{2}$ as $N \to \infty$. Regardless, Theorem \ref{mainNonTec} shows that $\Hd [A_{N} + \theta A_{N}] \geq \tfrac{2}{3}$ for every $\theta \in [0,1] \, \setminus \, E$, where $\Hd E = 0$. \end{ex}

Even though the largest "$\epsilon$" in $\Hd [A + A] \geq \alpha + \epsilon$ depends on the Ahlfors-regularity constant of $A$, it is known that $A + A + \ldots + A$ contains an interval, and in particular $\Hd [A + \ldots + A] = 1$, if the number of copies of $A$ in the sum is sufficiently large, depending on the dimension and Ahlfors-regularity constant of $A$. This is due to Astels \cite{MR1491854}, and a recent higher-dimensional generalisation is due to Feng and Wu \cite{MR4313239} (Ahlfors-regularity is not sufficient in $\R^{d}$, but one rather needs to assume that $A$ has \emph{positive thickness}, which is implied by Ahlfors-regularity for $A \subset \R$.) While these results of Astels or Feng-Wu are not used in the proof of Theorem \ref{mainNonTec}, an idea of "iteration" is also present, see Section \ref{s:outline}, and enables one to upgrade $\epsilon$-improvements to more substantial ones, at the cost of throwing away a zero-dimensional set of exceptional directions $E \subset \R$. 

The examples above indicate that Theorem \ref{mainNonTec} illustrates a phenomenon slightly distinct from those discussed by Bourgain, and Dyatlov-Zahl. In some ways, a closer relative is Hochman's projection theorem \cite{Ho} for self-similar sets: a special case of his result shows that if $A,B \subset \R$ are self-similar sets with common contraction ratios, then 
\begin{displaymath} \Hd [A + \theta B] = \min\{\Hd A + \Hd B,1\} \end{displaymath}
for all $\theta \in \R \, \setminus \, E$, where $\Hd E = 0$. In fact, even the packing dimension of $E$ is zero. The assumption about common contraction ratios is needed to ensure that $A \times B$ is also a self-similar set: this is relevant, because Hochman's projection theorem states, generally, that if $K \subset \R^{2}$ is a self-similar set without rotations, then $\Hd \pi_{\theta}(K) = \min\{\Hd K,1\}$ for all $\theta \in \R \, \setminus \, E$, where $\dim E = 0$, and $\pi_{\theta}(x,y) := x + \theta y$.

I briefly mention that the case with (dense) rotations can also be handled with different methods, see \cite{MR2470633}. More generally, the product of self-similar sets is self-affine, and there are numerous papers containing strong projection theorems for self-affine sets and measures, see \cite{MR3955707,MR3558544,MR2912701,MR2470633}.

The starting point of the current paper was an attempt to generalise Hochman's projection theorem from self-similar sets $K \subset \R^{2}$ to Ahlfors-regular sets $K \subset \R^{2}$. The attempt failed in various ways, and Theorem \ref{mainNonTec} was the best outcome I could recover. The initial goal still seems plausible, however, so I pose it as a question:
\begin{question}\label{mainQ} Let $K \subset \R^{2}$ be a closed Ahlfors-regular set. Is it true that $\Hd \pi_{\theta}(K) = \min\{\Hd K,1\}$ for all $\theta \in \R \, \setminus \, E$, where $\Hd E = 0$? \end{question} 
An affirmative answer to Question \ref{mainQ} would be an improvement over Marstrand's classical projection theorem \cite{Mar} for Ahlfors-regular sets. Partial evidence for Question \ref{mainQ} can be found in \cite{MR4218963}, where an analogous result is proved for Assouad dimension in place of Hausdorff dimension. (The result for Assouad dimension does not require "$K$" to be Ahlfors-regular, which is essentially due to the possibility to shift attention from "$K$" to its more regular tangents.)

\subsection{Proof outline}\label{s:outline} The uniform lower bound $\Hd [A + A] \geq \Hd A + \epsilon$ for Ahlfors-regular sets, discussed above, can be deduced from certain \emph{inverse theorems} in additive combinatorics; there are several of these, for example by Hochman \cite{Ho}, Sanders \cite{MR2994996}, and most recently by Shmerkin \cite{Sh}. Similar ideas are also present in the works of Bourgain \cite{Bo1,Bo2}. To be precise, Dyatlov-Zahl \cite{MR3558305} applied Sanders's theorem \cite{MR2994996}, while Rossi-Shmerkin used Shmerkin's inverse theorem \cite{Sh}.  I deliberately wrote "can" in the opening of the section, since Cladek-Tao \cite{2020arXiv201202747C} avoid the use of inverse theorems. 

The idea of inverse theorems is, briefly, the following: if $\Hd [A + B] \leq \Hd A + \epsilon$ for a sufficiently small constant $\epsilon > 0$, then $A$ needs to have \emph{$1$-dimensional branching} on scales where $B$ has \emph{positive-dimensional branching}. If $\Hd B > 0$, then $B$ necessarily has positive-dimensional branching on many scales, and hence $A$ has $1$-dimensional branching on many scales. However, if $A$ is $\alpha$-Ahlfors-regular with $\alpha < 1$, then $A$ cannot have $1$-dimensional branching on \textbf{any} scales. If follows that $\Hd [A + B] \geq \Hd A + \epsilon$.

As we have seen in Example \ref{ex:An}, there is an inevitable dependence between the Ahlfors-regularity constant of $A$, and the number "$\epsilon$". This dependence is hidden in the argument above: the statement that "$A$ cannot have $1$-dimensional branching on any scales" is only true, if the scales are chosen appropriately: the \emph{leaps} between consecutive scales need to be big enough, depending on the regularity constant of $A$. The \emph{leap size} is one of the parameters in the inverse theorem (of either Hochman or Shmerkin), and the theorems only become applicable if the threshold "$\epsilon$" in the inequality $\Hd [A + B] \leq \Hd A + \epsilon$ is small enough, depending on the leap size. 

To prove Theorem \ref{mainNonTec}, a bootstrapping scheme is required. The following outline is not in $1$-to-$1$ correspondence with what really happens in the proof, but I hope that some ideas are transmitted. The main idea is to assume "inductively" that one has already established a weaker version of Theorem \ref{mainNonTec}: one has already managed to show that 
\begin{equation}\label{form260} \Hd [A + \theta B] \geq \alpha + \chi \end{equation}
for a certain parameter $\chi \geq 0$, for \textbf{all} pairs of Ahlfors-regular sets $A,B$ with fixed dimensions $\alpha,\beta$, and regularity constants $C_{A},C_{B}$, and for all $\theta \in [0,1]$ outside a tiny exceptional set $E \subset [0,1]$. The set $E$ may naturally depend on the choices of $A,B$. In the "base case" $\chi = 0$ (or alternatively $\chi = \epsilon$, depending on $C_{A}$), one simply has $E = \emptyset$. To quantify the "tininess" of $E$ in general, one formulates a $\delta$-discretised version of the theorem, so $E = E_{\delta}$. Then, one shows that $\mathcal{H}^{\tau}_{\infty}(E) \leq \delta^{\eta}$, where $\tau > 0$ can be chosen arbitrarily small (one eventually wants to let $\tau \to 0$ to establish the $0$-dimensionality of exceptions), and $\eta > 0$ is an auxiliary parameter. For the details of how to set this up, see Theorem \ref{main}.

Next, one makes a counter assumption: there exists a pair of Ahlfors-regular sets $A,B$, with exactly the same parameters $\alpha,\beta,C_{A},C_{B}$ as above, such that 
\begin{equation}\label{form258} \Hd [A + \theta B] \leq \alpha + \chi + \epsilon \quad \text{with} \quad \chi + \epsilon < \beta \cdot \tfrac{1 - \alpha}{2 - \alpha} \end{equation}
for all $\theta \in E \subset [0,1]$, where, roughly speaking, $\mathcal{H}^{\tau}_{\infty}(E) \geq \delta^{\eta/2}$. This means that $E$ is allowed to be fairly tiny, but is assumed to be substantially larger than the "tininess" of $E$ in the inductive hypothesis. Hence, combining \eqref{form260} and \eqref{form258}, one finds that
\begin{equation}\label{form259} \alpha + \chi \leq \Hd [A + \theta B] \leq \alpha + \chi + \epsilon \end{equation}
for a typical direction $\theta \in E$. Now, fix $\theta_{0} \in E$ such that \eqref{form259} holds, and view $A \times B$ as a subset of $[A + \theta_{0}B] \times \R =: A' \times \R$ (after a change of coordinates). Then $A'$ has the following two properties:
\begin{enumerate}
\item $\alpha + \chi \leq \Hd A' \leq \alpha + \chi + \epsilon$,
\item $A'$ is a union of translates of $A$. This implies that $A'$ has $\geq \alpha$-dimensional branching on all scales. For a clarification of what this means, see the statement of Theorem \ref{shmerkin}, and the remark after it. 
\end{enumerate}
Recalling that $A \times B \subset A' \times \R$, up to a change of coordinates, one more precisely rewrites $A \times B = A' \times B'$, where $B'$ represents the intersection of $A \times B$ with the "typical fibre" under the map $\pi_{\theta_{0}}(x,y) = x + \theta_{0}y$. From the upper bound in \eqref{form259}, one infers that $\Hd B' \geq \Hd (A \times B) - (\alpha + \chi + \epsilon) = \beta - \chi - \epsilon$. Recall from \eqref{form258} that $\chi + \epsilon < \beta(1 - \alpha)/(2 - \alpha)$, so in fact $\Hd B' \geq \beta/(2 - \alpha)$.

Next, fix another direction $\theta_{1} \in E$ such that $|\theta_{0} - \theta_{1}| \sim 1$. Then, writing $\theta := \theta_{1} - \theta_{0}$, one roughly observes that $A + \theta_{1}B = A' + \theta B'$, so in particular 
\begin{equation}\label{form152} \Hd (A' + \theta B') = \Hd (A + \theta_{1}B) \stackrel{\eqref{form259}}{\leq} \alpha + \chi + \epsilon \stackrel{(1)}{\leq} \Hd A' + \epsilon. \end{equation}
This places us in a position to use Shmerkin's inverse theorem, as described in the second paragraph of this section. See Theorem \ref{shmerkin} for the precise statement of Shmerkin's result. The leap size (hence the choice of $\epsilon$) will only depend on $C_{A}$. The conclusion is that if $\epsilon > 0$ is small enough, then $A'$ has $1$-dimensional branching on all the scales where $B'$ has positive-dimensional branching. The fraction of scales such that $B'$ has positive-dimensional branching is at least $\Hd B' \geq \beta - \chi - \epsilon$. Consequently $A'$ has $1$-dimensional branching on at least that many scales. 

On the other hand, property (2) of the set $A'$ says that $A'$ has $\geq \alpha$-dimensional branching on \textbf{all} scales (this is where the dependence between the leap size and the Ahlfors-regularity of $A$ comes in). Ignoring "$\epsilon$", this allows us to compute the following lower bound on the dimension of $A'$:
\begin{displaymath} \alpha + \chi \stackrel{(1)}{\geq} \Hd A' \geq 1 \cdot \Hd B' + \alpha \cdot (1 - \Hd B') \geq (\beta - \chi) + \alpha \cdot (1 - \beta + \chi). \end{displaymath}
Rearranging, we find that $\chi \geq \beta(1 - \alpha)/(2 - \alpha)$, which contradicts \eqref{form258}. Hence the counter assumption in \eqref{form258} must be false, and in fact $\Hd [A + \theta B] \geq \alpha + \chi + \epsilon$ for some $\theta \in E$. Repeating this argument $\sim \epsilon^{-1}$ times will eventually conclude the proof of Theorem \ref{mainNonTec}.

\subsection{Notation}\label{s:notation} Open balls in either $\R^{d}$ will be denoted $B(x,r)$; in this paper always $d \in \{1,2\}$. If $f,g$ are real-valued non-negative functions of some parameter $x \in X$, the notation $f \lesssim g$ means that there exists an absolute constant $C \geq 1$ such that $f(x) \leq Cg(x)$ for all $x \in X$. If $A \subset \R^{d}$ is a bounded set, and $\delta > 0$, then $N_{\delta}(A)$ refers to the smallest number of balls $B(x,\delta) \subset \R^{d}$ required to cover $A$. If $A$ is a finite set, its cardinality is denoted $|A|$. For $\delta > 0$, the open Euclidean $\delta$-neighbourhood of a set $A \subset \R^{d}$ is denoted $A(\delta)$.  For $\delta > 0$, a \emph{tube of width $\delta > 0$} is a set of the form $\pi_{\theta}^{-1}(I)$, where $\pi_{\theta}(x,y) = x + \theta y$, $\theta \in [0,1]$, and $I \subset \R$ is an interval of length $\delta$. Note that if $T = \pi_{\theta}^{-1}[x - \delta/2,x + \delta/2] \subset \R^{2}$ is a tube of width $\delta$, where $\theta \in [0,1]$, then $\ell(\delta/4) \subset T \subset \ell(\delta/2)$ with $\ell := \pi_{\theta}^{-1}\{x\}$.

\subsection{Acknowledgements} I would like to thank the referee for a very careful reading of the manuscript, and for making a large number of helpful suggestions. 

\section{Some definitions} We start by recalling the notion of entropy. If $\mu$ is a probability measure on some space $\Omega$, and $\mathcal{F}$ is a $\mu$ measurable partition of $\Omega$, the $\mathcal{F}$-entropy of $\mu$ is defined by
\begin{displaymath} H(\mu,\mathcal{F}) := \sum_{F \in \mathcal{F}} \mu(F)\log \tfrac{1}{\mu(F)}. \end{displaymath}
Here $0 \cdot \log \tfrac{1}{0} := 0$, and "$\log$" refers to logarithm in base $2$. Typically, the measures of interest are probability measures $\R$, and $\mathcal{F}$ is the partition into dyadic intervals of length $\delta > 0$; this partition is denoted by $\mathcal{D}_{\delta} := \{[k\delta,(k + 1)\delta) : k \in \Z\}$. In addition to $\mathcal{F}$-entropy, we will also need the following \emph{conditional entropy of $\mu$}:
\begin{displaymath} H(\mu,\mathcal{F} \mid \mathcal{E}) := \sum_{E \in \mathcal{E}} \mu(E)H(\mu_{E},\mathcal{F}). \end{displaymath}
Here $\mu_{E} := \mu(E)^{-1} \cdot \mu|_{E}$, and $\mathcal{E},\mathcal{F}$ are $\mu$ measurable partitions of $\Omega$. In the applications below, every $E \in \mathcal{E}$ is a finite union of certain sets in $\mathcal{F}$. In this special case, the conditional entropy can be rewritten as
\begin{equation}\label{entropy} H(\mu,\mathcal{F} \mid \mathcal{E}) = H(\mu,\mathcal{F}) - H(\mu,\mathcal{E}), \end{equation}
as an easy calculation shows (or see \cite[Proposition 3.3]{MR3590535}). The deepest fact we will need to know about entropy is the estimate $H(\mu,\mathcal{F}) \leq \log |\mathcal{F}|$, which is an immediate consequence of Jensen's inequality. 

We then proceed to other definitions.
 
\begin{definition}[Projections] For $\theta \in \R$, we define the maps $\pi_{\theta} \colon \R^{2} \to \R$ by 
\begin{displaymath} \pi_{\theta}(x,y) := x + \theta y, \qquad (x,y) \in \R^{2}. \end{displaymath} 
\end{definition}

\begin{definition}\label{def:mult} Let $K \subset \R^{2}$, let $0 < r \leq R \leq \infty$, and let $x \in K$. For $\theta \in [0,1]$, we define the following \emph{multiplicity number}:
\begin{displaymath} \m_{K,\theta}(x \mid [r,R]) := N_{r}(B(x,R) \cap K_{r} \cap \pi_{\theta}^{-1}\{\pi_{\theta}(x)\}).  \end{displaymath} 
Here $K_{r}$ refers to the $r$-neighbourhood of $K$. Thus, $\m_{K,\theta}(x \mid [r,R])$ keeps track of the (smallest) number of $r$-balls needed to cover the intersection between $B(x,R) \cap K_{r}$ and the line $\pi_{\theta}^{-1}\{\pi_{\theta}(x)\}$. Often the set "$K$" is clear from the context, and we abbreviate $\m_{K,\theta} =: \m_{\theta}$. We also allow for the case $R = \infty$: then $B(x,R) := \R^{2}$. \end{definition}

\begin{definition}[High multiplicity sets]\label{def:highMult} Let $0 < r \leq R \leq \infty$, $M > 0$, and let $\theta \in [0,1]$. For $K \subset \R^{2}$, we define the \emph{high multiplicity set}
\begin{displaymath} H_{\theta}(K,M, [r,R]) := \{x \in K : \m_{K,\theta}(x \mid [r,R]) \geq M\}. \end{displaymath}  \end{definition}

Note that the same latter "$H$" will stand for both high-multiplicity sets, and entropy; the correct interpretation should always be clear from context. We will next verify some elementary but useful facts about the multiplicity numbers and high multiplicity sets. The first observation is that 
\begin{equation}\label{form62} M' \geq M > 0 \quad \Longrightarrow \quad H_{\theta}(K,M',[r,R]) \subset H_{\theta}(K,M,[r,R]) \end{equation}
for all $\theta,K,r \leq R$, since if $\m_{K,\theta}(x \mid [r,R]) \geq M'$, then also $\m_{K,\theta}(x \mid [r,R]) \geq M$. The next lemma answers the questions: what happens to the multiplicity numbers and high multiplicity sets if we change the radii $r$ and $R$?
\begin{lemma}\label{lemma6} Let $K \subset \R^{2}$, $x \in K$, $\theta \in S^{1}$, $C \geq 1$, and assume that $Cr \leq R$. Then,
\begin{equation}\label{form32} \m_{K,\theta}(x \mid [r,R]) \leq C \cdot \m_{K,\theta}(x \mid [Cr,R]) \quad \text{and} \quad \m_{K,\theta}(x \mid [r,R]) \leq \m_{K,\theta}(x \mid [r,CR]). \end{equation} 
In particular,
\begin{equation}\label{form55} H_{\theta}(M,[r,R]) \subset H_{\theta}(\tfrac{M}{C},[Cr,R]) \quad \text{and} \quad H_{\theta}(M,[r,R]) \subset H_{\theta}(M,[r,CR]) \end{equation}
for all $M > 0$. Here we abbreviate $H_{\theta}(\ldots) := H_{\theta}(K,\ldots)$.
\end{lemma}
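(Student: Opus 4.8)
The plan is to unwind the definitions and reduce everything to two elementary covering facts: a ball of radius $R$ covers a ball of radius $R/C$, and an $r$-neighbourhood contains a $Cr$-neighbourhood only after we pay a factor $C$ in covering number when we try to recover the finer information. Fix $K, x, \theta$ and $C \geq 1$ with $Cr \leq R$, and abbreviate $\ell := \pi_\theta^{-1}\{\pi_\theta(x)\}$, the line through $x$ in direction determined by $\theta$.

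\textbf{First inequality in \eqref{form32}.} I would start from the set $S := B(x,R) \cap K_r \cap \ell$ appearing in the definition of $\m_{K,\theta}(x \mid [r,R]) = N_r(S)$. Since $K_r \subset K_{Cr}$, we have $S \subset B(x,R) \cap K_{Cr} \cap \ell =: S'$, and $\m_{K,\theta}(x \mid [Cr,R]) = N_{Cr}(S')$. Now the key step: a cover of $S'$ by $N_{Cr}(S')$ balls of radius $Cr$ can be refined to a cover of $S'$ (hence of $S$) by balls of radius $r$, at the cost of a multiplicative constant. Because $S, S'$ lie on the \emph{line} $\ell$, each ball $B(y,Cr)$ meets $\ell$ in a segment of length at most $2Cr$, which is covered by at most $\lceil 2C \rceil \leq C+1$ balls of radius $r$ — but one can do slightly better: each $Cr$-ball contributes at most $C$ (rounding issues can be absorbed, or one takes $2C$ and adjusts; since the paper writes the clean constant $C$ I would check that with the base-point-centred covering count $N_r$ this is exactly $\leq C$, or else just state $\lesssim C$ and note the constant is harmless). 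This gives $N_r(S) \leq C \cdot N_{Cr}(S') = C \cdot \m_{K,\theta}(x \mid [Cr,R])$, which is the first claim.

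\textbf{Second inequality in \eqref{form32}.} This one is monotonicity with no loss. Enlarging $R$ to $CR$ only enlarges the set: $B(x,R) \cap K_r \cap \ell \subset B(x,CR) \cap K_r \cap \ell$, so $N_r$ of the former is at most $N_r$ of the latter, i.e.\ $\m_{K,\theta}(x \mid [r,R]) \leq \m_{K,\theta}(x \mid [r,CR])$. (Here $C \geq 1$ is used so that $CR \geq R$; no hypothesis relating $r$ and $R$ is even needed for this half.)

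\textbf{Passing to high multiplicity sets \eqref{form55}.} These are immediate corollaries. If $x \in H_\theta(M,[r,R])$, then $\m_{K,\theta}(x \mid [r,R]) \geq M$; by the first inequality, $C \cdot \m_{K,\theta}(x \mid [Cr,R]) \geq M$, i.e.\ $\m_{K,\theta}(x \mid [Cr,R]) \geq M/C$, so $x \in H_\theta(\tfrac{M}{C},[Cr,R])$. Likewise the second inequality gives $\m_{K,\theta}(x \mid [r,CR]) \geq \m_{K,\theta}(x \mid [r,R]) \geq M$, so $x \in H_\theta(M,[r,CR])$. I expect the only genuinely non-formal point to be pinning down the precise constant $C$ (versus $2C$ or $C+1$) in the covering step of the first inequality; this depends on the exact convention for $N_r$ (smallest number of radius-$r$ balls, centres unrestricted), and is a one-line geometric check on the interval $\ell \cap B(y,Cr)$, whose length is $\leq 2Cr$ — the rest is pure monotonicity of covering numbers and inclusions.
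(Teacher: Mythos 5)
Your proof is correct and takes essentially the same route as the paper: first use $K_r \subset K_{Cr}$ to bound $\m_{K,\theta}(x \mid [r,R])$ by $N_r$ of the set appearing in $\m_{K,\theta}(x \mid [Cr,R])$, then compare $N_r$ and $N_{Cr}$ on a line paying a factor $C$, while the second inequality and the inclusions for $H_\theta$ are immediate monotonicity, exactly as in the paper's argument. Your aside about whether the factor is really $C$ rather than $\lceil C \rceil$ is a fair observation (the paper also just asserts "$N_r(A\cap\ell) \leq C\cdot N_{Cr}(A\cap\ell)$" without comment), but since every application in the paper either uses integer $C$ or absorbs such constants by shrinking $\delta$, it is harmless.
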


\begin{proof} Clearly $K_{r} \subset K_{Cr}$, so 
\begin{displaymath} \m_{K,\theta}(x \mid [r,R]) \leq N_{r}(B(x,R) \cap K_{Cr} \cap \pi_{\theta}^{-1}\{\pi_{\theta}(x)\}). \end{displaymath}
The right hand side above only differs from $\m_{K,\theta}(x \mid [Cr,R])$ in the appearance of "$N_{r}$" instead of "$N_{Cr}$". But evidently
\begin{displaymath} N_{r}(A \cap \pi_{\theta}^{-1}\{t\}) \leq C \cdot N_{Cr}(A \cap \pi_{\theta}^{-1}\{t\}), \qquad A \subset \R^{2}, \end{displaymath} 
and we have proven the first inequality in \eqref{form32}. The second inequality is a restatement of $N_{r}(B(x,R) \cap K_{r} \cap \pi_{\theta}^{-1}\{\pi_{\theta}(x)\}) \leq N_{r}(B(x,CR) \cap K_{r} \cap \pi_{\theta}^{-1}\{\pi_{\theta}(x)\})$. The inclusions in \eqref{form55} are direct consequences of the inequalities in \eqref{form32}.  \end{proof}

\begin{warning} The first inequality in \eqref{form32} is not reversible. Writing out the definition,
\begin{displaymath} \m_{K,\theta}(x \mid [Cr,R]) = N_{Cr}(B(x,R) \cap K_{Cr} \cap \pi_{\theta}^{-1}\{\pi_{\theta}(x)\}) \end{displaymath}
counts the intersections between $\pi_{\theta}^{-1}\{\pi_{\theta}(x)\}$ and $K_{Cr}$, and there might be many (many!) more of those than intersections between $\pi_{\theta}^{-1}\{\pi_{\theta}(x)\}$ and $K_{r}$. In other words, it is possible that $\m_{K,\theta}(x \mid [Cr,R]) \gg \m_{K,\theta}(x \mid [r,R])$ for individual points $x \in K$. \end{warning}

\begin{definition}[Rescaling map] Let $z_{0} \in \R$ or $z_{0} \in \R^{2}$, and $r > 0$. By definition, the \emph{rescaling map} with parameters $z_{0},r_{0}$ is the map $T_{z_{0},r_{0}}(z) := (z - z_{0})/r_{0}$, which sends the ball $B(z_{0},r_{0})$ to the unit ball $B(1) = B(0,1)$. 
\end{definition}

We next record how the high multiplicity sets interact with rescaling maps.

\begin{lemma}\label{lemma7} Let $K \subset \R^{2}$ be arbitrary, let $0 < r \leq R \leq \infty$, $M > 0$, and $\theta \in [0,1]$. Then, 
\begin{displaymath} T_{z_{0},r_{0}}(H_{\theta}(K,M,[r,R])) = H_{\theta}(T_{z_{0},r_{0}}(K),M,[\tfrac{r}{r_{0}},\tfrac{R}{r_{0}}]), \qquad z_{0} \in \R^{2}, \, r_{0} > 0. \end{displaymath}
\end{lemma}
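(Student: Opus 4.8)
The plan is to prove the identity by unwinding both sides through the definition of the high multiplicity set and reducing everything to a single compatibility statement for the multiplicity numbers under the rescaling map $T := T_{z_0,r_0}$. Concretely, $x \in H_\theta(K,M,[r,R])$ iff $\m_{K,\theta}(x \mid [r,R]) \geq M$, and $T(x) \in H_\theta(T(K),M,[r/r_0,R/r_0])$ iff $\m_{T(K),\theta}(T(x) \mid [r/r_0,R/r_0]) \geq M$; since $T$ is a bijection of $\R^2$, it therefore suffices to establish the pointwise equality
\begin{equation}\label{form:rescaleMult} \m_{K,\theta}(x \mid [r,R]) = \m_{T(K),\theta}(T(x) \mid [\tfrac{r}{r_0},\tfrac{R}{r_0}]), \qquad x \in K, \, z_0 \in \R^2, \, r_0 > 0. \end{equation}
Everything then comes down to verifying \eqref{form:rescaleMult}.

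To prove \eqref{form:rescaleMult} I would check that $T$ transports, term by term, each ingredient entering the definition $\m_{K,\theta}(x \mid [r,R]) = N_r(B(x,R) \cap K_r \cap \pi_\theta^{-1}\{\pi_\theta(x)\})$. First, $T$ maps the ball $B(x,R)$ onto $B(T(x),R/r_0)$. Second, $T$ maps the $r$-neighbourhood $K_r$ onto the $(r/r_0)$-neighbourhood of $T(K)$, i.e. $T(K_r) = (T(K))_{r/r_0}$, because $T$ rescales all Euclidean distances by the factor $1/r_0$. Third, the key linear-algebraic point: since $\pi_\theta$ is an affine map (linear plus the structure $x + \theta y$), the preimage lines behave correctly under $T$. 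Precisely, one checks that $T(\pi_\theta^{-1}\{\pi_\theta(x)\}) = \pi_\theta^{-1}\{\pi_\theta(T(x))\}$: both sides are the line through $T(x)$ in the fixed direction $(-\theta,1)$ (translation of $z_0$ and dilation by $1/r_0$ preserve the direction of a line, and the line on the left passes through $T(x)$, as does the one on the right). Combining these three facts, $T$ maps the set $B(x,R) \cap K_r \cap \pi_\theta^{-1}\{\pi_\theta(x)\}$ bijectively onto $B(T(x),R/r_0) \cap (T(K))_{r/r_0} \cap \pi_\theta^{-1}\{\pi_\theta(T(x))\}$.

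Finally I would invoke the elementary scaling property of covering numbers: for any bounded $A \subset \R^2$ and any $s > 0$, $N_r(A) = N_{r/r_0}(T(A))$, since $T$ is a similarity with ratio $1/r_0$ and hence sets up a bijection between optimal $r$-covers of $A$ and optimal $(r/r_0)$-covers of $T(A)$. Applying this with $A = B(x,R) \cap K_r \cap \pi_\theta^{-1}\{\pi_\theta(x)\}$ and $s = r$ yields exactly \eqref{form:rescaleMult}, and the lemma follows. I do not anticipate a genuine obstacle here; the statement is a bookkeeping exercise. The one point that deserves a line of care is the handling of the case $R = \infty$ (where $B(x,R) = \R^2$ and $R/r_0 = \infty$, so the ball intersections drop out on both sides and the argument is unchanged), and the mild subtlety that $\pi_\theta$ is translation-by-$\pi_\theta(x)$ covariant rather than literally invariant under $T$ — which is precisely why the identity $T(\pi_\theta^{-1}\{\pi_\theta(x)\}) = \pi_\theta^{-1}\{\pi_\theta(T(x))\}$, with the base point $\pi_\theta(x)$ correctly moving to $\pi_\theta(T(x))$, is the crux of the computation.
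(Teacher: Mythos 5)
Your proof is correct and follows essentially the same route as the paper: both reduce the lemma to the observation that $T_{z_0,r_0}$ carries the set $B(x,R)\cap K_r\cap\pi_\theta^{-1}\{\pi_\theta(x)\}$ onto $B(T(x),R/r_0)\cap(T(K))_{r/r_0}\cap\pi_\theta^{-1}\{\pi_\theta(T(x))\}$, and then invoke $N_{r/r_0}(T(A))=N_r(A)$. Your version is slightly more explicit — you isolate the pointwise identity $\m_{K,\theta}(x\mid[r,R])=\m_{T(K),\theta}(T(x)\mid[r/r_0,R/r_0])$ and verify each of the three ingredients (ball, neighbourhood, fibre line) separately, whereas the paper asserts the set identity in one line and omits the details — but the content is the same.
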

\begin{proof} Let $y = T_{z_{0},r_{0}}(z) \in T_{z_{0},r_{0}}(H_{\theta}(K,M,[r,R]))$. Thus $z \in H_{\theta}(K,M,[r,R])$, that is,
\begin{equation}\label{form251} N_{r}(B(z,R) \cap K_{r} \cap \pi_{\theta}^{-1}\{\pi_{\theta}(z)\}) \geq M. \end{equation}
We are supposed to prove that $y \in H_{\theta}(T_{z_{0},r_{0}}(K),M,[\tfrac{r}{r_{0}},\tfrac{R}{r_{0}}])$, or in other words
\begin{equation}\label{form250} N_{\tfrac{r}{r_{0}}}(B(y,\tfrac{R}{r_{0}}) \cap (T_{z_{0},r_{0}}K)_{\tfrac{r}{r_{0}}} \cap \pi_{\theta}^{-1}\{\pi_{\theta}(y)\}) \geq M. \end{equation}
To see this, one first needs to spend a moment to check that
\begin{displaymath} B(y,\tfrac{R}{r_{0}}) \cap (T_{z_{0},r_{0}}K)_{\tfrac{r}{r_{0}}} \cap \pi_{\theta}^{-1}\{\pi_{\theta}(y)\} = T_{z_{0},r_{0}}(B(z,R) \cap K_{r} \cap \pi_{\theta}^{-1}\{\pi_{\theta}(z)\}). \end{displaymath}
I omit the details. After this, one can apply the equation
\begin{displaymath} N_{\tfrac{r}{r_{0}}}(T_{z_{0},r_{0}}(A)) = N_{r}(A), \qquad A \subset \R^{2}, \end{displaymath}
to the set $A = B(z,R) \cap K_{r} \cap \pi_{\theta}^{-1}\{\pi_{\theta}(z)\}$ to infer \eqref{form250} from \eqref{form251}. This proves the inclusion "$\subset$" of the lemma, and the other inclusion follows in a similar fashion. \end{proof}

\section{Technical version of the main theorem}

In this section, we reduce the proof of the main result, Theorem \ref{mainNonTec}, to a more technical statement. We begin with a few additional definitions. 

\begin{definition}[$(\tau,C)$-Frostman measure] Let $\tau > 0$ and $C \geq 1$. A Borel measure $\nu$ on $\R$ is called a $(\tau,C)$-Frostman measure if $\nu(B(x,r)) \leq C r^{\tau}$ for all $x \in \R$ and $r > 0$.
\end{definition}

 \begin{definition}[$(\alpha,C)$-regular measure] Let $\alpha > 0$ and $C \geq 1$. A Borel measure $\mu$ with $K := \spt \mu \subset \R$ is called $(\alpha,C)$-regular if 
 \begin{enumerate}
 \item $\mu$ is an $(\alpha,C)$-Frostman measure, and
 \item $N_{r}(K \cap B(x,R)) \leq C (R/r)^{\alpha}$ for all $x \in \R$ and $0 < r \leq R < \infty$.
 \end{enumerate}
\end{definition}
\begin{definition}[Ahlfors-regular set]\label{def:ahlforsRegular} Let $0 \leq \alpha \leq 1$. A closed set $A \subset \R$ is called \emph{$\alpha$-regular} if $\calH^{\alpha}|_{A}$ is $(\alpha,C)$-regular for some $C \geq 1$. \end{definition}

\begin{remark} A more common definition of Ahlfors-regularity is the following: for $\alpha \geq 0$, an $\mathcal{H}^{\alpha}$ measurable set $A \subset \R$ is $\alpha$-regular if there exists a constant $C > 0$ such that 
\begin{equation}\label{form147} C^{-1}r^{\alpha} \leq \mathcal{H}^{\alpha}(A \cap B(x,r)) \leq Cr^{\alpha}, \qquad x \in A,\, 0 < r < \diam(A). \end{equation}
A set $A \subset \R$ satisfying \eqref{form147} is also $\alpha$-regular according to Definition \ref{def:ahlforsRegular}. Indeed, first note that $\mathcal{H}^{s}|_{A}$ is clearly $(\alpha,C)$-Frostman. Second, if $x \in \R$, $0 < r \leq R < \infty$, and $N := N_{r}(A \cap B(x,R))$, then there exists an $(r/2)$-separated set $\{x_{1},\ldots,x_{N}\} \subset A \cap B(x,R)$ of cardinality $N$. Consequently, if also $4R < \diam(A)$, we may use \eqref{form147} to infer that
\begin{displaymath} C^{-1}N(r/2)^{\alpha} \leq \sum_{j = 1}^{N} \mathcal{H}^{s}(A \cap B(x_{j},\tfrac{r_{j}}{2})) \leq \mathcal{H}^{s}(A \cap B(x_{1},4R)) \leq 4^{\alpha}CR^{\alpha}. \end{displaymath}
This yields $N \leq 8^{\alpha}C^{2}(R/r)^{\alpha}$. If $4R \geq \diam(A)$, then we should first find a point $\bar{x} \in \R$ such that $N \sim N_{r}(A \cap B(\bar{x},\diam(A)/4))$. Then we may use the previous argument to deduce that $N \lesssim 8^{\alpha}C^{2}(\diam(A)/r)^{\alpha} \leq 32^{\alpha}C^{2}(R/r)^{\alpha}$. Hence, we conclude that $A$ is $\alpha$-regular according to Definition \ref{def:ahlforsRegular}. The converse implication is also true, but this this is not so relevant, so we leave the details to the reader. Our definition of Ahlfors-regularity has the useful property that the constants remain precisely unchanged under rescaling. \end{remark}

With these definitions in hand, we can state the main technical result of the paper:

\begin{thm}\label{main} Let $\alpha,\beta,\tau \in (0,1]$,  $C_{\alpha},C_{\beta} > 0$, and let $\sigma > \beta/(2 - \alpha)$.
\begin{itemize}
\item[(A1) \phantomsection \label{A1}] Let $\mu_{\alpha}$ be an $(\alpha,C_{\alpha})$-regular measure with $K_{\alpha} := \spt \mu_{\alpha} \subset \R$.
\item[(A2) \phantomsection \label{A2}] Let $\mu_{\beta}$ be a $(\beta,C_{\beta})$-regular measure with $K_{\beta} := \spt \mu_{\beta} \subset \R$.
\end{itemize}
Write $\mu := \mu_{\alpha} \times \mu_{\beta}$ and $K := \spt \mu = K_{\alpha} \times K_{\beta} \subset \R^{2}$. Then,
\begin{equation}\label{mainEq} \mathcal{H}^{\tau}_{\infty}(\{\theta \in [0,1] : \mu(B(1) \cap H_{\theta}(K,\delta^{-\sigma},[\delta,1])) \geq \delta^{\eta}\}) \leq \delta^{\eta} \end{equation}
for all $0 < \delta \leq \delta_{0}(\alpha,\beta,\sigma,\tau,C_{\alpha},C_{\beta})$ and $0 < \eta \leq \eta_{0}(\alpha,\beta,\sigma,\tau,C_{\alpha},C_{\beta})$.
\end{thm}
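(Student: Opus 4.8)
\medskip

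The plan is to prove Theorem \ref{main} by a bootstrapping induction on a parameter $\chi \geq 0$, exactly as sketched in Section \ref{s:outline}, with the "induction hypothesis" being a version of \eqref{mainEq} but with $\delta^{-\sigma}$ replaced by a smaller multiplicity threshold $\delta^{-\sigma'}$ for an appropriate $\sigma' = \sigma'(\chi)$, and with the conclusion weakened to $\mathcal{H}^\tau_\infty(\cdots) \leq \delta^{\eta}$ for a smaller $\eta$. More precisely, since the high-multiplicity set $H_\theta(K,\delta^{-\sigma},[\delta,1])$ controls the "thick fibres" of $\pi_\theta$, I first want to relate \eqref{mainEq} to a dimension statement: if $\mu(B(1) \cap H_\theta(K,\delta^{-\sigma},[\delta,1]))$ is not small, then $\pi_\theta$ compresses a definite $\mu$-mass, and hence $N_\delta(\pi_\theta(K \cap B(1)))$ is essentially at most $\delta^{-\sigma} \cdot \delta^{-(\alpha+\text{small})}$ on that mass — this is the $\delta$-discretized avatar of $\Hd[A+\theta B] \leq \alpha + \chi + \epsilon$. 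The base case $\chi = 0$ (equivalently $\sigma' \leq \alpha + \beta$ trivially, or one uses the crude bound $\sigma' = 0$) is essentially vacuous because $H_\theta(K,\delta^{0},[\delta,1]) = K$ and one only needs a weak covering bound.

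\medskip

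For the inductive step I would run the counter-assumption argument from Section \ref{s:outline}. Suppose the exceptional set $E = E_\delta := \{\theta \in [0,1] : \mu(B(1) \cap H_\theta(K,\delta^{-\sigma},[\delta,1])) \geq \delta^\eta\}$ has $\mathcal{H}^\tau_\infty(E) > \delta^\eta$. Pick $\theta_0 \in E$; the fibre structure of $H_{\theta_0}$ lets me write $K \cap B(1)$, after the linear change of coordinates $(x,y) \mapsto (\pi_{\theta_0}(x,y), y)$, as (a subset of) $A' \times B'$ where $A' := \pi_{\theta_0}(K \cap B(1))$ is a union of translates of (pieces of) $K_\alpha$ — so $A'$ inherits $\geq \alpha$-dimensional branching on all scales, with leaps depending only on $C_\alpha$ — and $B'$ is the "typical fibre", which the thick-fibre hypothesis forces to have $N_\delta(B') \gtrsim \delta^{-\sigma} \gtrsim \delta^{-\beta/(2-\alpha)}$, i.e. branching dimension $> \beta/(2-\alpha)$ on a positive fraction of scales. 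Now pick a second $\theta_1 \in E$ with $|\theta_1 - \theta_0| \sim 1$ (possible because $\mathcal{H}^\tau_\infty(E)$ is not too small, so $E$ is not contained in a tiny interval) and observe $\pi_{\theta_1}(K \cap B(1))$ corresponds, in the new coordinates, to $A' + \theta B'$ with $\theta = \theta_1 - \theta_0$; the induction hypothesis applied at $\theta_1$ gives $N_\delta(A' + \theta B') \lesssim \delta^{-(\alpha + \chi + \text{small})} \leq \delta^{-(\Hd A' + \text{small})}$. Feeding this into Shmerkin's inverse theorem (Theorem \ref{shmerkin}, with leap size chosen according to $C_\alpha$, forcing $\epsilon$ small) shows $A'$ has full ($1$-dimensional) branching on all scales where $B'$ branches positively — at least a $\beta/(2-\alpha)$-fraction — and combined with the $\geq \alpha$-branching on the rest this gives $N_\delta(A') \gtrsim \delta^{-[\beta/(2-\alpha) + \alpha(1 - \beta/(2-\alpha))] + \text{small}} = \delta^{-[\alpha + \beta(1-\alpha)/(2-\alpha)] + \text{small}}$, contradicting $\Hd A' \leq \alpha + \chi$ once $\chi < \beta(1-\alpha)/(2-\alpha)$. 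This contradiction upgrades the bound: it shows that the exceptional set at threshold $\delta^{-\sigma}$ is genuinely small, which is one more step of the bootstrap; iterating $\sim \epsilon^{-1}$ times covers the full range of $\sigma > \beta/(2-\alpha)$.

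\medskip

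The main technical obstacles I anticipate are three. First, the passage "thick fibres $\Rightarrow$ small $\delta$-covering number of the projection" and its converse has to be done carefully with the $\mathcal{H}^\tau_\infty$-bookkeeping intact, because the exceptional set $E$ must be controlled in Hausdorff \emph{content} (not just measure), which is what allows $\tau \to 0$ at the end; this forces a pigeonholing over scales and over the content of $E$ that is delicate but, I expect, routine. Second, and this is the real heart of the matter, applying Shmerkin's inverse theorem requires packaging $A'$ and $B'$ into genuine measures with the right branching profiles at a carefully chosen \emph{sparse} sequence of scales (the "leaps"), and verifying that the leap size depends only on $C_\alpha$ and not on $\chi$ or $\delta$ — this is where the Ahlfors-regularity of $K_\alpha$ (via property (2) of Definition of $(\alpha,C)$-regular, giving $N_r(K_\alpha \cap B(x,R)) \leq C(R/r)^\alpha$ uniformly) does the essential work of ruling out $1$-dimensional branching of $A'$ on individual leaps. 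Third, there is the self-referential nature of the bootstrap: one must set up the quantifiers on $\delta_0, \eta_0$ so that at each of the $\sim \epsilon^{-1}$ stages the hypotheses of the previous stage are still available with room to spare (the "$\delta^{\eta/2}$ versus $\delta^\eta$" gap in Section \ref{s:outline}); getting the induction loaded correctly — i.e. choosing $\eta$ decreasing geometrically and $\delta_0$ accordingly — is bookkeeping-heavy but conceptually straightforward. I expect the Shmerkin-inverse-theorem step to be where the genuine mathematical content lies.
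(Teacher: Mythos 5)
The overall architecture you propose — bootstrap on the multiplicity exponent, counter-assumption, fix $\theta_0,\theta_1$ in the exceptional set, extract a product structure, feed it into Shmerkin's inverse theorem — does match the paper's argument at the skeletal level, and your reduction of the bootstrap to a contradiction against $\chi < \beta(1-\alpha)/(2-\alpha)$ is exactly the right numerology (the paper's Proposition \ref{mainProp}, applied iteratively starting from the trivial Proposition \ref{baseProp}). But your proposal reproduces two specific simplifications that the paper explicitly flags as inaccurate in Remark \ref{rem:outline}, and both are genuine gaps rather than routine bookkeeping.

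First, you cannot globally rewrite $K \cap B(1)$, after the shear $(x,y)\mapsto(\pi_{\theta_0}(x,y),y)$, as a product $A'\times B'$: the fibres $(A\times B)\cap \pi_{\theta_0}^{-1}\{t\}$ are not uniform enough, and no choice of "typical fibre" $B'$ makes $N_\delta(A'+\theta B')$ comparable to $N_\delta(\pi_\theta(K\cap B(1)))$ for $|\theta-\theta_0|\sim 1$. The paper's fix is local: restrict to a single tube $\mathbf{T}=\pi_{\theta_0}^{-1}(I)$ of width $\bar\delta^{1/2}$ (chosen in Section \ref{s:tubeSelection} to satisfy the high-multiplicity and projection estimates \nref{T1}--\nref{T2}), and build discrete measures $\Pi_1$ (from left endpoints of $\bar\delta$-tubes inside $\mathbf{T}$) and $\Pi_2$ (from the $y$-coordinates of the $\bar\delta^{1/2}$-balls meeting $\mathbf{T}$). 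The substitute product structure is only comparable to the true projection when $|\theta-\theta_0|\lesssim \bar\delta^{1/2}$; this is inequality \eqref{form121}, which is where the Lipschitz estimate $|\theta_1-\theta_0|\cdot|y_0-y|\lesssim\bar\delta$ enters and forces the upper bound in \eqref{form128}. Second, and as a consequence of the first, you must take $|\theta_1-\theta_0|\approx\bar\delta^{1/2}$ rather than $\sim 1$, and the existence of such a pair in $E$ is not guaranteed merely by $\mathcal{H}^\tau_\infty(E)\geq\delta^\eta$ at the original scale $\delta$: the exceptional set could have no branching between $\delta$ and $\delta^{1/2}$. The paper therefore pigeonholes a scale $\bar\delta\in[\delta,\delta^{\omega(\tau)}]$ where the Frostman measure $\nu$ on $E$ has genuine branching (Lemma \ref{lemma8}), and then needs a non-trivial rescaling lemma (Lemma \ref{lemma9}, relying on Proposition \ref{prop3}) to push the counter-assumption from scale $\delta$ up to scale $\bar\delta$. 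There is also a step you omit entirely: Lemma \ref{lemma12} uses the inductive hypothesis to strip away the very-high-multiplicity portions of the fibres before the Shmerkin step, which is what allows the upper bound $N_{\bar\delta^{1/2}}(\bar K\cap\mathbf{T})\lesssim(\bar\delta^{1/2})^{-\sigma}$ that makes the $\Pi_1$-normalization and the branching lower bound of Lemma \ref{lemma11} go through.
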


\begin{remark}\label{rem:singleScale} Theorem \ref{main} easily implies a "single-scale" version of Theorem \ref{mainNonTec}. It reads as follows, using the same notation as in Theorem \ref{main}, and assuming additionally that $K \subset B(\tfrac{1}{2})$. Let $0 < \delta \leq \delta_{0}$ and $0 < \eta \leq \eta_{0}$, and let $E \subset [0,1]$ be an arbitrary set with $\mathcal{H}^{\tau}_{\infty}(E) > \delta^{\eta}$. \emph{Then, there exists $\theta \in E$ such that}
\begin{equation}\label{form144} N_{\delta}(\pi_{\theta}(K')) \geq \delta^{\sigma + \eta - \alpha - \beta} \end{equation} 
\emph{for all subsets $K' \subset K$ with $\mu(K') \geq \delta^{\eta}$}. I will sketch the argument for deducing \eqref{form144} from Theorem \ref{main}: the full details are very similar to the deduction of Theorem \ref{mainNonTec}, and these details follow up next.

Pick $\theta \in E$ such that 
\begin{equation}\label{form145} \mu(B(1) \cap H_{\theta}(K,(C_{1}\delta)^{-\sigma},[C_{2}\delta,1])) < \tfrac{1}{2}\delta^{\eta}, \end{equation}
where $C_{1},C_{2} \sim_{C_{\alpha},C_{\beta}} 1$ are suitable constants to be determined soon. This is possible by \eqref{mainEq} (adjusting $\delta,\eta$ slightly to take $C_{1},C_{2}$ into account), and the assumption $\mathcal{H}^{\tau}_{\infty}(E) > \delta^{\eta}$. Now, assume that \eqref{form144} fails for this particular $\theta$, and some set $K' \subset K$ with $\mu(K') \geq \delta^{\eta}$. By the regularity of $\mu$, we have $N_{\delta}(K') \gtrsim_{C_{\alpha},C_{\beta}} \delta^{\eta - \alpha - \beta}$. Since \eqref{form144} fails, the set $K'$ can be covered by a collection $\mathcal{T}$ of $|\mathcal{T}| \leq \delta^{\sigma + \eta - \alpha - \beta}$ tubes of the form $T = \pi_{\theta}^{-1}(I)$ for some interval $I \subset \R$ of length $|I| = \delta$. Informally speaking, the "typical" tube in $\mathcal{T}$ now intersects $\gtrsim N_{\delta}(K')/|\mathcal{T}| \gtrsim_{C_{\alpha},C_{\beta}} \delta^{- \sigma}$ discs in the minimal $\delta$-cover of $K'$. To be more precise, the sub-family $\mathcal{T}_{\mathrm{light}}$ of tubes in $\mathcal{T}$ failing this property (say $N_{\delta}(K' \cap T) \leq c \cdot \delta^{-\sigma}$ for $T \in \mathcal{T}_{\mathrm{light}}$), can cover at most a fraction of $\tfrac{1}{2}\mu(K')$ of the $\mu$ measure of $K'$. The reason is that $\mu(K' \cap T) \lesssim c \cdot \delta^{-\sigma + \alpha + \beta}$ for all $T \in \mathcal{T}_{\mathrm{light}}$, and hence
\begin{displaymath} \sum_{T \in \mathcal{T}_{\mathrm{light}}} \mu(K' \cap T) \lesssim c \cdot \delta^{-\sigma + \alpha + \beta} \cdot |\mathcal{T}| \leq c \cdot \delta^{\eta} \leq c \cdot \mu(K'). \end{displaymath}
If $c \sim_{C_{\alpha},C_{\beta}} 1$ is chosen appropriately, we have now shown that every tube in $\mathcal{T} \, \setminus \, \mathcal{T}_{\mathrm{light}}$ intersects $\gtrsim_{C_{\alpha},C_{\beta}} \delta^{-\sigma}$ discs in the minimal $\delta$-cover of $K'$ (hence also $K$), and these tubes cover, altogether, a subset $K_{0}' \subset K'$ of measure $\mu(K_{0}') \geq \tfrac{1}{2}\mu(K')$. If $C_{1},C_{2} \sim_{C_{\alpha},C_{\beta}} 1$ are chosen appropriately, it follows from the definition of high multiplicity sets that 
\begin{displaymath} K_{0}' \subset H_{\theta}(K,(C_{1}\delta)^{-\sigma},[C_{2}\delta,1]) \quad \Longrightarrow \quad \mu(B(1) \cap H_{\theta}(K,(C_{1}\delta)^{-\sigma},[C_{2}\delta,1])) \geq \mu(K_{0}'). \end{displaymath}
Since $\mu(K_{0}') \geq \tfrac{1}{2}\delta^{\eta}$, this contradicts \eqref{form145} and completes the proof of \eqref{form144} for $K'$. \end{remark}

We now prove Theorem \ref{mainNonTec}, assuming Theorem \ref{main}.

\begin{proof}[Proof of Theorem \ref{mainNonTec}, assuming Theorem \ref{main}] Let $A,B \subset \R$ be closed non-empty Ahlfors-regular sets, with $\alpha := \Hd A \in (0,1)$ and $\beta := \Hd B \in (0,1]$. Let $\mu_{\alpha} := \mathcal{H}^{\alpha}|_{A}$ and $\mu_{\beta} := \mathcal{H}^{\beta}|_{B}$. Then $\mu_{\alpha}$ is $(C_{\alpha},\alpha)$-regular for some $C_{\alpha} \geq 1$, and $\mu_{\beta}$ is $(C_{\beta},\beta)$-regular for some $C_{\beta} \geq 1$. Let us first reduce matters to proving
\begin{equation}\label{form148} \Hd \{\theta \in [0,1] : \Hd [A + \theta B] < \alpha + \beta \cdot \tfrac{1 -\alpha}{2 - \alpha}\} = 0. \end{equation}
Indeed, if
\begin{displaymath} \Hd \{\theta \in \R : \Hd [A + \theta B] < \alpha + \beta \cdot \tfrac{1 -\alpha}{2 - \alpha}\} > 0, \end{displaymath} 
then the same holds with "$\R$" replaced by either "$[-r,0]$" or "$[0,r]$" for some $r > 0$. In the former case we define $B_{r} := -rB$, and in the latter case we define $B_{r} := rB$. In both cases $B_{r}$ is $\beta$-regular, although with different constants. Moreover, $B_{r}$ fails \eqref{form148}, and this completes our reduction.

Assume then that \eqref{form148} fails for $A,B$. In other words, there exist $\tau,\epsilon > 0$, a subset $E \subset [0,1]$ with $\mathcal{H}^{\tau}_{\infty}(E) > 0$, and a constant 
\begin{equation}\label{eq:zeta} 0 < \zeta < \alpha + \beta \cdot \tfrac{1 - \alpha}{2 - \alpha} - \epsilon \end{equation}
such that
\begin{displaymath} \Hd [A + \theta B] < \zeta, \qquad \theta \in E. \end{displaymath}
We may assume with no loss of generality that $A,B \subset [0,\tfrac{1}{4}]$. Fix $0 < \eta < \epsilon/2$. We claim that there exist arbitrarily small scales $\delta > 0$ such that the following holds: there exists a set $E_{\delta} \subset E$ with $\mathcal{H}^{\tau}_{\infty}(E_{\delta}) \geq \delta^{\eta}$, and for every $\theta \in E_{\delta}$ a collection of dyadic intervals $\mathcal{I}_{\theta} \subset \mathcal{D}_{\delta}$ with cardinality $|\mathcal{I}_{\theta}| \leq \delta^{-\zeta}$, and the following property:
\begin{equation}\label{form252} (\mu_{\alpha} \times \mu_{\beta})(\{(x,y) \in A \times B : \pi_{\theta}(x,y) \in \cup \mathcal{I}_{\theta}\}) \geq 2 \cdot \delta^{\eta}, \qquad \theta \in E_{\delta}. \end{equation} 
The proof is entirely standard, but let us give the details nevertheless. For all $\theta \in E$, we assumed that $\Hd \pi_{\theta}(A \times B) < \zeta$. Consequently, for any $\delta_{0} > 0$, there exists a countable collection of dyadic intervals $\mathcal{J}_{\theta}$ of lengths $\leq \delta_{0}$ such that $\pi_{\theta}(A \times B) \subset \cup \mathcal{J}_{\theta}$, and
\begin{equation}\label{form262} \sum_{J \in \mathcal{J}_{\theta}}  |J|^{\zeta} \leq 1. \end{equation}
Since the push-forward measure $\mu_{\theta} := \pi_{\theta}(\mu_{\alpha} \times \mu_{\beta})$ is supported on $\pi_{\theta}(A \times B) \subset \cup \mathcal{J}_{\theta}$, we have $\mu_{\theta}(\cup \mathcal{J}_{\theta}) \gtrsim (C_{\alpha}C_{\beta})^{-1}$. Consequently, by the pigeonhole principle, there exists a dyadic scale $\delta = 2^{-j} \cdot \delta_{0}$, with $j = j(\theta) \geq 0$, such that
\begin{displaymath} \mu_{\theta}(\cup \mathcal{J}_{j,\theta}) \gtrsim (C_{\alpha}C_{\beta})^{-1} \cdot (1 + j)^{-2}, \end{displaymath}
where $\mathcal{J}_{j,\theta} = \{J \in \mathcal{J}_{\theta} : |J| = 2^{-j} \cdot \delta_{0}\}$. If the largest scale "$\delta_{0} > 0$" was chosen small enough, depending only on $C_{\alpha},C_{\beta},\eta > 0$, then $(C_{\alpha}C_{\beta})^{-1}(1 + j)^{-2} \gg \delta^{\eta}$, and consequently $\mu_{\theta}(\cup \mathcal{J}_{j,\theta}) \geq 2 \cdot \delta^{\eta}$. Evidently $|\mathcal{J}_{j,\theta}| \leq \delta^{-\zeta}$ by \eqref{form262}. Now we can set $\mathcal{I}_{\theta} := \mathcal{J}_{j,\theta}$, and the requirements $|\mathcal{I}_{\theta}| \leq \delta^{-\zeta}$ and \eqref{form252} are satisfied. The only problem is that the choice of $j(\theta)$, and hence $\delta = 2^{j} \cdot \delta_{0}$ depends on the choice of $\theta \in E$. To fix this, and find the subset $E_{\delta} \subset E$ with $\mathcal{H}^{\tau}_{\infty}(E_{\delta}) \geq \delta^{\eta}$, another application of the pigeonhole principle is needed: one considers the sets $E_{j} := \{\theta \in E : j(\theta) = j\}$, for $j \geq 0$, and uses the sub-additivity of $\mathcal{H}^{\tau}_{\infty}$ to pick $j_{0} \geq 0$ with the property $\mathcal{H}^{\tau}_{\infty}(E_{j_{0}}) \gtrsim \mathcal{H}^{\tau}_{\infty}(E)/(1 + j_{0})^{2}$. If $\delta_{0} > 0$ was initially chosen small enough, now depending on $\mathcal{H}^{\tau}_{\infty}(E) > 0$ and $\eta$, the right hand side is substantially larger than $\delta^{\eta}$. Finally, we set $\delta := 2^{-j_{0}} \cdot \delta_{0}$ and $E_{\delta} := E_{j_{0}}$. Then $\mathcal{H}^{\tau}_{\infty}(E_{\delta}) \geq \delta^{\eta}$, and \eqref{form252} is satisfied for all $\theta \in E_{\delta}$.

Now, fix $\delta > 0$, as above, and let $\mathcal{T}_{\theta} := \{\pi_{\theta}^{-1}\{I\} : I \in \mathcal{I}_{\theta}\}$, $\theta \in E_{\delta}$. These are collections of tubes of width $\delta$ (recall how we defined this in Section \ref{s:notation}), and the sets $H_{\theta} := \{(x,y) \in A \times B : \pi_{\theta}(x,y) \in \cup \mathcal{I}_{\theta}\}$ appearing in \eqref{form252} are covered, each, by the union $\cup \mathcal{T}_{\theta}$. Writing $\mu := \mu_{\alpha} \times \mu_{\beta}$, a tube $T \in \mathcal{T}_{\theta}$ is called \emph{light} if 
\begin{displaymath} \mu(T) \leq \delta^{\zeta + \eta}. \end{displaymath}
Then the union of light tubes has $\mu$-measure no larger than $\delta^{\zeta + \eta} \cdot |\mathcal{T}_{\theta}| \leq \delta^{\eta}$. Consequently, by \eqref{form252}, at least half of the $\mu$ measure of $H_{\theta}$ is covered by non-light tubes. Fix a non-light tube $T \in \mathcal{T}_{\theta}$, and write $\gamma := \alpha + \beta$, so that $\mu$ is $(\gamma,C_{\gamma})$-regular for some $C_{\gamma} \sim C_{\alpha}C_{\beta}$. Then, with $K := A \times B \subset [0,\tfrac{1}{4}]^{2}$, we may infer from the non-lightness of $T$ that
\begin{displaymath} N_{\delta}(K \cap T) \gtrsim C_{\gamma}^{-1} \cdot \delta^{-\gamma + \zeta + \eta}. \end{displaymath} 
Let $\sigma < \gamma - \zeta - \eta$ (the right hand side here is positive, see the calculation in \eqref{form136}), and take $\delta > 0$, above, so small that $N_{\delta}(K \cap T) \geq 100 \cdot (3\delta)^{-\sigma}$ for every non-light tube $T \in \mathcal{T}_{\theta}$. Then, we claim that
\begin{equation}\label{form254} K \cap T \subset H_{\theta}(K,(3\delta)^{-\sigma},[3\delta,1]), \qquad T \in \mathcal{T}_{\theta} \text{ non-light}. \end{equation}
To see this, fix a non-light tube $T \in \mathcal{T}_{\theta}$, and $x \in K \cap T$. Then, since $K = A \times B \subset [0,\tfrac{1}{4}]^{2}$, we have $K \subset B(x,\tfrac{3}{4})$, and therefore
\begin{displaymath} \m_{K,\theta}(x \mid [3\delta,1]) = N_{3\delta}(B(x,1) \cap K_{3\delta} \cap \pi_{\theta}^{-1}\{\pi_{\theta}(x)\}) \geq \tfrac{1}{100} \cdot N_{\delta}(K \cap T) \geq (3\delta)^{-\sigma}. \end{displaymath}
This proves \eqref{form254}. Since the total $\mu$ measure of non-light tubes exceeds $\delta^{\eta}$, by \eqref{form252}, we conclude that
\begin{displaymath} \mu(B(1) \cap H_{\theta}(K,(3\delta)^{-\sigma},[3\delta,1])) \geq \delta^{\eta}, \qquad \theta \in E_{\delta}, \end{displaymath}
and consequently
\begin{equation}\label{form253} \mathcal{H}^{\tau}_{\infty}(\{\theta \in [0,1] : \mu(B(1) \cap H_{\theta}(K,(3\delta)^{-\sigma},[3\delta,1])) \geq \delta^{\eta}\}) \geq \mathcal{H}^{\tau}_{\infty}(E_{\delta}) \geq \delta^{\eta}. \end{equation}
Here $\sigma < \gamma - \zeta - \eta$ can be chosen arbitrarily close to the value
\begin{equation}\label{form136} \gamma - \zeta - \eta \stackrel{\eqref{eq:zeta}}{>} (\alpha + \beta) - (\alpha + \beta \cdot \tfrac{1 - \alpha}{2 - \alpha} - \epsilon) - \eta = \tfrac{\beta}{2 - \alpha} + \tfrac{\epsilon}{2}. \end{equation}
In particular, we may take $\sigma > \beta/(2 -\alpha)$. But once we do this, \eqref{form253} contradicts the statement of Theorem \ref{main} for $\delta > 0$ and $\eta > 0$ small enough. This completes the proof of Theorem \ref{mainNonTec}. \end{proof}

\subsection{Proof of Theorem \ref{main}} In this section, we reduce the proof of Theorem \ref{main} to the following (even) more technical proposition: 
\begin{proposition}\label{mainProp} Let $\alpha,\beta,\tau \in (0,1]$, $C_{\alpha},C_{\beta} > 0$, and assume that
\begin{displaymath} \sigma > \tfrac{\beta}{2 - \alpha}. \end{displaymath}
 Then, there exist $\zeta = \zeta(\alpha,\beta,C_{\alpha},\sigma,\tau) > 0$ such that $\zeta$ stays bounded away from zero as long as $\sigma$ stays bounded away from $\beta/(2 - \alpha)$, and the following holds.
Assume that there exists a parameter $\eta_{0} > 0$, and a scale $\Delta_{0} > 0$, such that
\begin{displaymath} \mathcal{H}^{\tau}_{\infty}(\{\theta \in [0,1] : \mu(B(1) \cap H_{\theta}(\spt \mu,\Delta^{-\sigma},[\Delta,1])) \geq \Delta^{\eta_{0}}\}) \leq \Delta^{\eta_{0}}, \quad 0 < \Delta \leq \Delta_{0}, \end{displaymath}
whenever $\mu = \mu_{\alpha} \times \mu_{\beta}$ is a product of an $(\alpha,C_{\alpha})$-regular measure $\mu_{\alpha}$, and a $(\beta,C_{\beta})$-regular measure $\mu_{\beta}$. Then, there exists a parameter $\eta > 0$ and a scale $\delta_{0} > 0$, both depending only on $\alpha,\beta,C_{\alpha},C_{\beta},\sigma,\tau,\Delta_{0},\eta_{0}$, such that
\begin{displaymath} \mathcal{H}^{\tau}_{\infty}(\{\theta \in [0,1] : \mu(B(1) \cap H_{\theta}(\spt \mu,\delta^{-\sigma + \zeta},[\delta,1])) \geq \delta^{\eta}\}) \leq \delta^{\eta}, \qquad 0 < \delta \leq \delta_{0},\end{displaymath} 
whenever $\mu = \mu_{\alpha} \times \mu_{\beta}$ satisfies the same hypotheses as above.   \end{proposition}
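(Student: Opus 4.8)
I would argue by contradiction, implementing the bootstrapping scheme of Section~\ref{s:outline}, with the heuristic objects ``$A'$'' and ``$B'$'' made precise through entropy and a disintegration of $\mu$. Suppose the conclusion fails. Then, however small we are allowed to take the eventual parameters $\eta,\delta_0$, there is a product measure $\mu=\mu_\alpha\times\mu_\beta$ with the prescribed constants and a scale $\delta\le\delta_0$ such that, with $K=\spt\mu$,
\[
\mathcal{H}^{\tau}_{\infty}(E_\delta)>\delta^{\eta},\qquad E_\delta:=\{\theta\in[0,1]:\mu(B(1)\cap H_\theta(K,\delta^{-\sigma+\zeta},[\delta,1]))\ge\delta^{\eta}\}.
\]
After a rescaling (which by design changes no constants) we may assume $K\subset B(\tfrac12)$, so that for $x\in K$ the multiplicity $\m_{K,\theta}(x\mid[\delta,1])$ is just the number of $\delta$-pieces of $K_\delta$ met by the line $\pi_\theta^{-1}\{\pi_\theta(x)\}$; the cases $\alpha=1$, or $\sigma$ so large that no $\delta^{-(\sigma-\zeta)}$-heavy tube can exist (recall a fibre meets at most $N_\delta(K_\beta)\lesssim\delta^{-\beta}$ pieces), are trivial or vacuous and are set aside. \emph{Throughout, $\zeta=\zeta(\alpha,\beta,C_\alpha,\sigma,\tau)$ is fixed first; then, given the hypothesised $\eta_0,\Delta_0$, one takes $\eta\ll\min\{\eta_0,\tau\zeta\}$ and $\delta_0$ small.}

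\textbf{Step 1: two good directions.} Apply the standing hypothesis at scale $\Delta=\delta$: the bad set $\{\theta:\mu(B(1)\cap H_\theta(K,\delta^{-\sigma},[\delta,1]))\ge\delta^{\eta_0}\}$ has $\mathcal{H}^\tau_\infty\le\delta^{\eta_0}$. Since $\eta<\eta_0$ and $\delta_0$ is small, $\delta^{\eta_0}<\tfrac12\delta^{\eta}<\tfrac12\mathcal{H}^\tau_\infty(E_\delta)$, so by subadditivity of $\mathcal{H}^\tau_\infty$ the set $E':=E_\delta\setminus(\text{bad set})$ still satisfies $\mathcal{H}^\tau_\infty(E')>\tfrac12\delta^{\eta}>0$. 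Every $\theta\in E'$ is \emph{doubly good}: it carries $\mu$-mass $\ge\delta^{\eta}$ of points whose fibre meets $\ge\delta^{-\sigma+\zeta}$ pieces of $K_\delta$, while $\mu$-a.e.\ point of $K$ lies in a fibre meeting $<\delta^{-\sigma}$ pieces. From $E'$ extract $\theta_0,\theta_1$ with $|\theta_0-\theta_1|$ as large as the content permits; in the worst case $|\theta_0-\theta_1|\gtrsim\delta^{\eta/\tau}$, and because $\eta\ll\tau\zeta$ this separation will cost only an $O(\eta/\tau)$-fraction of scales later, which is absorbed into the error budget.

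\textbf{Step 2: the structured pieces $A'$ and $B'$ at $\theta_0$.} Let $\Phi(x,y)=(\pi_{\theta_0}(x,y),y)$, a bounded-distortion linear bijection of $\R^2$. Let $G\subset K$ be the heavy set for $\theta_0$ ($\mu(G)\ge\delta^{\eta}$), let $\mathcal T_0$ be the $\theta_0$-tubes of width $\delta$ meeting $G$, and set $A':=\pi_{\theta_0}(G)=\bigcup_{T\in\mathcal T_0}I_T$. Counting cells, $|\mathcal T_0|\cdot\delta^{-\sigma+\zeta}\le N_\delta(K\cap B(1))\lesssim_{C_\alpha,C_\beta}\delta^{-\gamma}$ ($\gamma=\alpha+\beta$), so $N_\delta(A')\lesssim\delta^{-(\gamma-\sigma+\zeta)}$ --- this is the needed \emph{upper} bound on $A'$. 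Disintegrating $\mu|_G=\int(\mu_\alpha|_{A_b}\times\delta_b)\,d\mu_\beta(b)$ with $A_b=\{a:(a,b)\in G\}$ gives $\pi_{\theta_0}(\mu|_G)=\int(\mu_\alpha|_{A_b})*\delta_{\theta_0 b}\,d\mu_\beta(b)$, an average of translates of submeasures of $\mu_\alpha$; normalising, one gets a probability measure $\lambda$ on $A'$. The \emph{lower} Ahlfors-regularity of $A$ (every ball $B(a,\rho')$, $a\in A$, contains $\gtrsim_{C_\alpha}(\rho'/\rho)^\alpha$ points that are $\rho$-separated in $A$) shows that, once consecutive scales are spaced by a leap depending only on $C_\alpha$, $A'$ (and $\lambda$ off a negligible part) has $\ge\alpha$-dimensional branching on \emph{all} scales of $[\delta,1]$; this realises property (2) of the outline and is the sole source of the dependence of the leap size on $C_\alpha$. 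A pigeonholing over the fibre-profiles of the tubes in $\mathcal T_0$ then yields a subfamily $\mathcal T_0'$ still carrying $\mu$-mass $\gtrsim\delta^{\eta}$ up to $\delta^{o(1)}$ losses, a common fibre $B'\subset K_\beta$, and a Frostman measure $\rho$ on $B'$ (inheriting the $(\beta,C_\beta)$-bound from $B'\subset K_\beta$) with $H(\rho,\mathcal D_\delta)\gtrsim(\sigma-\zeta)\log\tfrac1\delta$ --- using $N_\delta(\pi_2(K\cap T))\ge\delta^{-\sigma+\zeta}$ for $T\in\mathcal T_0$ and the elementary fact that a set of $\delta$-covering number $\ge\delta^{-(\sigma-\zeta)}$ has positive-dimensional branching on a $\gtrsim(\sigma-\zeta)$-fraction of the scales. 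Making this extraction rigorous --- realising $K$ restricted to $\cup\mathcal T_0'$ as essentially $A'\times B'$ in $\Phi$-coordinates, compatibly with what happens at $\theta_1$ --- together with Step~3 below, is what I expect to be the main obstacle.

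\textbf{Step 3 (second direction $\to$ inverse theorem) and Step 4 (conclusion).} Write $\theta:=\theta_1-\theta_0$, so that $\pi_{\theta_1}=(\text{first coordinate of }\Phi)+\theta\cdot(\text{second coordinate of }\Phi)$; hence $\pi_{\theta_1}$ of $\Phi^{-1}$ of the $A'\times B'$ region equals $A'+\theta B'$, and $\pi_{\theta_1}(\mu|_{\cdot})$ is comparable to $\lambda*\rho_\theta$ ($\rho_\theta:=$ $\rho$ dilated by $\theta$). Since $\theta_1\in E_\delta$, the cell-count at $\theta_1$ (as in Step~2) bounds $N_\delta(A'+\theta B')\lesssim\delta^{-(\gamma-\sigma+\zeta)}$, so $H(\lambda*\rho_\theta,\mathcal D_\delta)\le(\gamma-\sigma+O(\zeta))\log\tfrac1\delta$; combined with $H(\lambda*\rho_\theta,\mathcal D_\delta)\ge H(\lambda,\mathcal D_\delta)$ and $\ge H(\rho_\theta,\mathcal D_\delta)\gtrsim(\sigma-\zeta)\log\tfrac1\delta-\log\tfrac1{|\theta|}$, and since $\log\tfrac1{|\theta|}\ll\zeta\log\tfrac1\delta$, this yields both $H(\lambda,\mathcal D_\delta)\ge(\gamma-\sigma-O(\zeta))\log\tfrac1\delta$ and the inverse-theorem hypothesis $H(\lambda*\rho_\theta,\mathcal D_\delta)\le H(\lambda,\mathcal D_\delta)+O(\zeta)\log\tfrac1\delta$. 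Now invoke Shmerkin's inverse theorem (Theorem~\ref{shmerkin}): for $\zeta$ below a threshold determined by the leap size, hence by $\alpha,\beta,C_\alpha$ only, the scales of $[\delta,1]$ split into those on which $\lambda$ has full ($1$-dimensional) branching and those on which $\rho$ has essentially no branching; thus $\lambda$ has full branching on the $\gtrsim(\sigma-\zeta)$-fraction of scales on which $\rho$ branches, and $\ge\alpha$-dimensional branching on the rest by Step~2. Summing the scalewise entropy increments,
\[
(\gamma-\sigma+\zeta)\log\tfrac1\delta\ \gtrsim\ H(\lambda,\mathcal D_\delta)\ \gtrsim\ \big[(\sigma-\zeta)\cdot 1+\big(1-(\sigma-\zeta)\big)\cdot\alpha\big]\log\tfrac1\delta-O(\zeta)\log\tfrac1\delta,
\]
that is $\gamma-\sigma\ge\alpha+\sigma(1-\alpha)-O(\zeta)$, which rearranges to $\sigma(2-\alpha)\le\beta+O(\zeta)$, i.e.\ $\sigma\le\beta/(2-\alpha)+O(\zeta)$. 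Choosing $\zeta$ at the outset to be a small enough multiple of $\sigma-\beta/(2-\alpha)$ --- the implied constant depending only on $\alpha,\beta,C_\alpha$ through the leap size and the absorbed errors --- contradicts $\sigma>\beta/(2-\alpha)$. Hence the counter-assumption fails, and the $\zeta$ so chosen indeed stays bounded away from $0$ as long as $\sigma$ stays bounded away from $\beta/(2-\alpha)$, which is the assertion. The remaining points --- the pigeonholing losses, the parameter ordering, and the harmless rescalings --- are routine.

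\textbf{Main obstacle.} The heart of the matter is Steps~2--3: producing, from the two high-multiplicity hypotheses at $\theta_0$ and $\theta_1$ simultaneously, genuine measures $\lambda$ on $A'$ and $\rho$ on $B'$ such that (i) $N_\delta(A')\lesssim\delta^{-(\gamma-\sigma+\zeta)}$, (ii) $A'$ has $\ge\alpha$-branching on all scales despite $A'$ being only the union of the heavy $\theta_0$-tubes, (iii) $H(\rho,\mathcal D_\delta)\gtrsim(\sigma-\zeta)\log\tfrac1\delta$, and (iv) $H(\lambda*\rho_\theta,\mathcal D_\delta)\le H(\lambda,\mathcal D_\delta)+O(\zeta)\log\tfrac1\delta$; i.e.\ turning the schematic identity ``$A\times B=A'\times B'$'' of the outline into a precise statement robust enough to feed Shmerkin's theorem, while reconciling the two coordinate systems attached to $\theta_0$ and $\theta_1$.
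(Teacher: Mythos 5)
Your high-level strategy tracks the paper's outline (Section~\ref{s:outline} and Remark~\ref{rem:outline}) and the final entropy arithmetic reproduces the target inequality $\sigma(2-\alpha)\le\beta+O(\zeta)$. You also correctly identify that Steps~2--3 --- turning ``$A\times B = A'\times B'$'' into something you can feed Shmerkin's theorem --- are where the work is. But two earlier steps are also missing, and both are essential preconditions for Steps~2--3; they cannot be dismissed as routine.

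\textbf{(i) You must pigeonhole a branching scale for the exceptional measure.} Your Step~1 extracts $\theta_0,\theta_1\in E'$ with $|\theta_0-\theta_1|$ as large as the content permits, namely $\gtrsim\delta^{\eta/\tau}$, which is close to $1$ for small $\eta$. This breaks the argument: the comparison $N_\delta(\pi_{\theta_1}(\bar K_0))\approx N_\delta(A'+\theta B')$ (the paper's \eqref{form121}) comes with an error $\sim|\theta_0-\theta_1|\cdot(\text{tube width})$, so it is useful at scale $\delta$ only when $|\theta_0-\theta_1|\cdot w\lesssim\delta$. With a $\delta^{1/2}$-tube this forces $|\theta_0-\theta_1|\lesssim\delta^{1/2}$, the \emph{upper} bound in \eqref{form128}. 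Conversely, for the discretised $L^2$ bound on the rescaled $\rho_\theta$ (the paper's \eqref{form132}--\eqref{form232}), one needs $|\theta_0-\theta_1|$ not too small either, or the pushforward collapses onto too few $\delta$-cells. In general neither bound can be guaranteed at the scale $\delta^{1/2}$ from $\mathcal H^\tau_\infty(E')>\delta^\eta$ alone: the measure on $E'$ may have all its branching concentrated at other scales. This is exactly why the paper introduces the Frostman measure $\nu$ on $E$, pigeonholes a ``branching'' scale $\bar\delta\in[\delta,\delta^{\tau/4}]$ (Lemma~\ref{lemma8}), and finds $\theta_0,\theta_1$ with $\bar\delta^{(1+\epsilon)/2}\le|\theta_0-\theta_1|\le\bar\delta^{1/2}$. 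Working at the raw scale $\delta$ only succeeds in the special case $\bar\delta=\delta$; in general you also need the ``change of scale'' step (Lemma~\ref{lemma9}, relying on Proposition~\ref{prop3}) to push the counter-assumption from $\delta$ to $\bar\delta$.

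\textbf{(ii) You must make the two heavy sets intersect.} In Step~2 you build $A'$ from the heavy set $G=G_{\theta_0}$ for $\theta_0$, and in Step~3 you invoke ``the cell-count at $\theta_1$'' to bound $N_\delta$ of the $\theta_1$-projection of that same set. But the upper bound $N_\delta(\pi_{\theta_1}(\cdot))\lesssim\delta^{-(\gamma-\sigma+\zeta)}$ in the paper (cf.\ \eqref{form90}) requires that every $\theta_1$-tube in the cover contains a point of the $\theta_1$-\emph{heavy} set, i.e.\ a point of $H_{\theta_1}(K,\delta^{-\sigma+\zeta},[\delta,1])$. A point of $G_{\theta_0}$ has no reason to lie there. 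The paper resolves this by a second-moment/Cauchy--Schwarz argument over the pigeonholed interval to extract $\theta_0,\theta_1$ with $\mu(K_{\theta_0}\cap K_{\theta_1})\ge\omega\delta^{4\eta}$ and then works exclusively inside $\bar K:=K_{\theta_0}\cap K_{\theta_1}$, which is simultaneously $\theta_0$-heavy and $\theta_1$-heavy (and, via Lemma~\ref{lemma12}, free of very-high-multiplicity points in both ranges $[\bar\delta^{1/2},1]$ and $[\bar\delta,\bar\delta^{1/2}]$). Without this intersection step your chain $N_\delta(A'+\theta B')\lesssim\delta^{-(\gamma-\sigma+\zeta)}$ is unjustified. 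Finally, on the ``main obstacle'' you flag: the paper does not attempt a common fibre $B'\subset K_\beta$. It fixes one $\bar\delta^{1/2}$-tube $\mathbf T_0$, lets $A_1$ be the $\bar\delta$-tube positions of $\bar K\cap\mathbf T_0$ and $A_2$ the $y$-coordinates of $\bar\delta^{1/2}$-discs inside $\mathbf T_0$, and shows that the incidence set $G\subset A_1\times A_2$ has $\Pi_1\times\Pi_2$-mass $\gtrsim\bar\delta^{O(\zeta)}$; the $\alpha$-branching of $U\subset A_1$ is then obtained not directly from ``$A'$ is a union of translates of $A$'', but via a well-chosen heavy disc $B_0$, Fubini to pin a slice $y_0$, and Proposition~\ref{branchProp} applied to $\mu_A$ on $\bar U=\{\bar x:(\bar x,y_0)\in B_0\cap T_U\}$. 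This is appreciably more delicate than the fibre-profile pigeonholing you describe.
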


The main point here is that, at the cost of decreasing $\delta$ and $\eta$, we may decrease the (relative) multiplicity from $\Delta^{-\sigma}$ to $\delta^{-(\sigma - \zeta)}$, where $\zeta > 0$ stays bounded from below until "$\sigma$" reaches $\beta/(2 - \alpha)$ (from above). Proposition \ref{mainProp} will prove Theorem \ref{main}, once it is coupled with the following trivial "base case", where $\sigma$ is assumed to be sufficiently large:
\begin{proposition}\label{baseProp} Let $\sigma > 1.1$. Then, there exists an absolute constant $\Delta_{0} > 0$ such that
\begin{displaymath} H_{\theta}(\spt \mu,\Delta^{-\sigma},[\Delta,1]) = \emptyset, \qquad 0 < \Delta \leq \Delta_{0}, \, \theta \in [0,1], \end{displaymath}
for all measures $\mu$ on $\R^{2}$. \end{proposition}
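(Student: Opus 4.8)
The plan is to observe that the multiplicity number $\m_{K,\theta}(x \mid [\Delta,1])$ is, for purely geometric reasons, bounded above by a constant multiple of $\Delta^{-1}$, and that this beats $\Delta^{-\sigma}$ as soon as $\sigma > 1$ and $\Delta$ is small. Concretely, writing $K := \spt \mu$ and fixing $x \in K$ (if $K = \emptyset$ there is nothing to prove), the set $B(x,1) \cap K_{\Delta} \cap \pi_{\theta}^{-1}\{\pi_{\theta}(x)\}$, whose $\Delta$-covering number is $\m_{K,\theta}(x \mid [\Delta,1])$, is contained in the line segment $B(x,1) \cap \pi_{\theta}^{-1}\{\pi_{\theta}(x)\}$. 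The line $\pi_{\theta}^{-1}\{\pi_{\theta}(x)\}$ passes through the centre $x$ of the ball $B(x,1)$, so this segment is a chord of length at most $2$, and any segment of length $\leq 2$ is covered by at most $3\Delta^{-1}$ balls of radius $\Delta$ whenever $0 < \Delta \leq 1$. Hence
\begin{displaymath} \m_{K,\theta}(x \mid [\Delta,1]) \leq 3\Delta^{-1}, \qquad x \in K, \ \theta \in [0,1], \ 0 < \Delta \leq 1, \end{displaymath}
for every measure $\mu$ on $\R^{2}$.

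Given this, I would choose $\Delta_{0}$ as an absolute constant small enough that $3\Delta^{-1} < \Delta^{-\sigma}$ for all $0 < \Delta \leq \Delta_{0}$. Since $\sigma > 1.1$ we have $1 - \sigma < -\tfrac{1}{10}$, hence $\Delta^{1 - \sigma} \geq \Delta^{-1/10}$ for $\Delta \in (0,1)$; taking $\Delta_{0} := 6^{-10}$ forces $\Delta^{-1/10} \geq 6 > 3$, and therefore $3\Delta^{-1} < \Delta^{1 - \sigma} \cdot \Delta^{-1} = \Delta^{-\sigma}$. Consequently no point $x \in K$ satisfies $\m_{K,\theta}(x \mid [\Delta,1]) \geq \Delta^{-\sigma}$, which is to say $H_{\theta}(\spt \mu, \Delta^{-\sigma}, [\Delta,1]) = \emptyset$.

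I do not expect any genuine obstacle: the content of the proposition is exactly that multiplicity $\Delta^{-\sigma}$ with $\sigma > 1$ cannot occur inside a unit ball at scale $\Delta$, so it serves as the natural "base case" ($\chi = 0$) for the bootstrapping in Proposition \ref{mainProp}. The only things to watch are the bookkeeping of the absolute constants (the "$3$" above and the resulting $\Delta_{0}$) and the fact that $\sigma$ may be taken arbitrarily close to $1.1$; this is why one uses the fixed exponent gap $\sigma - 1 > \tfrac{1}{10}$ rather than anything depending on $\sigma$, so that $\Delta_{0}$ is genuinely absolute.
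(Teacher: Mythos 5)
Your proof is correct and is essentially the same as the paper's: both observe that $\m_{K,\theta}(x \mid [\Delta,1])$ is trivially bounded by $O(\Delta^{-1})$ because one is covering (a subset of) a chord of $B(x,1)$ of length $2$ by $\Delta$-balls, and both then use the fixed gap $\sigma - 1 > 0.1$ to make this beat $\Delta^{-\sigma}$ for $\Delta$ below an absolute threshold. The only differences are the cosmetic choices of constants ($3\Delta^{-1}$ and $\Delta_{0}=6^{-10}$ versus the paper's $10\Delta^{-1}$ and $\Delta_{0}=10^{-10}$).
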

\begin{proof} Fix $\sigma > 1.1$. Then, note that if $K \subset \R^{2}$ is an arbitrary closed set, for example $K = \spt \mu$ for some measure on $\R^{2}$, then 
\begin{displaymath} \m_{K,\theta}(x \mid [\Delta,1]) := N_{\delta}(B(x,1) \cap K_{\delta} \cap \pi_{\theta}^{-1}\{\pi_{\theta}(x)\}) \leq 10\Delta^{-1} < \Delta^{-\sigma} \end{displaymath} 
for every $x \in \R^{2}$ and $\theta \in [0,1]$, as soon as $\Delta^{0.1} \leq 1/10$, that is $\Delta \leq 10^{-10}$. \end{proof}

Let us then prove Theorem \ref{main}, using Propositions \ref{mainProp} and \ref{baseProp}.
\begin{proof}[Proof of Theorem \ref{main}] Fix $\alpha,\beta,\tau \in (0,1]$ and $C_{\alpha},C_{\beta} > 0$. Let $\Sigma := \Sigma(\alpha,\beta,C_{\alpha},C_{\beta},\tau)$ be the infimum of those $\sigma > \beta/(2 - \alpha)$ such that we know how to prove Theorem \ref{main} for these fixed values of parameters. In other words, $\Sigma$ is the infimum of those $\sigma > \beta/(2 - \alpha)$ with the property that there exists a threshold $\Delta_{0} = \Delta_{0}(\alpha,\beta,\sigma,\tau,C_{\alpha},C_{\beta}) > 0$, and a parameter $\eta_{0} = \eta_{0}(\alpha,\beta,\sigma,\tau,C_{\alpha},C_{\beta}) > 0$, such that the following holds for all product measures $\mu = \mu_{\alpha} \times \mu_{\beta}$ as in \nref{A1}-\nref{A2}, and for all $0 < \delta \leq \Delta_{0}$:
\begin{equation}\label{form236} \mathcal{H}^{\tau}_{\infty}(\{\theta \in [0,1] : \mu(B(1) \cap H_{\theta}(\spt \mu,\delta^{-\sigma},[\delta,1])) \geq \delta^{\eta_{0}}\}) \leq \delta^{\eta_{0}}. \end{equation} 
We claim that in fact $\Sigma = \beta/(2 - \alpha)$ for all $\tau > 0$, which will prove Theorem \ref{main}.

Assume to the contrary that $\Sigma > \beta/(2 - \alpha)$. Then, let $\sigma > \Sigma$ be such that $\sigma - \zeta < \Sigma$, where $\zeta := \zeta(\alpha,\beta,C_{\alpha},\sigma,\tau)$ is the parameter appearing in Proposition \ref{mainProp}. This can be done, since $\zeta$ is bounded away from zero when $\sigma$ stays bounded away from $\beta/(2 - \alpha)$, and now this is true for all $\sigma > \Sigma$ (even $\sigma \geq \Sigma$), by the counter assumption. 

Since $\sigma > \Sigma$, there exist parameters $\Delta_{0} > 0$ and $\eta_{0} > 0$ such that \eqref{form236} holds for this specific "$\sigma$", and for all product measures $\mu = \mu_{\alpha} \times \mu_{\beta}$ satisfying \nref{A1}-\nref{A2}. This is precisely what Proposition \ref{mainProp} asks for, and therefore the proposition yields new parameters $\eta > 0$ and $\delta_{0} > 0$ such that \eqref{form236} holds with "$\sigma$" replaced by "$\sigma - \zeta$", with "$\eta_{0}$" replaced by "$\eta$", for all $0 < \delta \leq \delta_{0}$, and again for all product measures $\mu = \mu_{\alpha} \times \mu_{\beta}$ as in \nref{A1}-\nref{A2}. By definition of the number "$\Sigma$", this means that in fact $\Sigma \leq \sigma - \zeta$, and a contradiction has been reached. This completes the proof of Theorem \ref{main}. \end{proof}

\section{Proof of Proposition \ref{mainProp}}

The proof of Proposition \ref{mainProp} uses many rescaling arguments, and we start by checking that the class of "products of regular measures" is invariant under rescaling maps.  

\begin{remark}\label{rem1} Let $T_{z_{0},r_{0}} \colon \R^{2} \to \R^{2}$ be a rescaling map, with $z_{0} = (x_{0},y_{0}) \in \R^{2}$ and $r_{0} > 0$. Then $T_{z_{0},r_{0}}$ can be written as a product of rescaling maps on $\R$, namely
\begin{displaymath} T_{z_{0},r_{0}}(x,y) = \left(\tfrac{x - x_{0}}{r_{0}},\tfrac{y - y_{0}}{r_{0}} \right) = (T_{x_{0},r_{0}}(x), T_{y_{0},r_{0}}(y)), \qquad (x,y) \in \R^{2}. \end{displaymath}
Let $\mu = \mu_{\alpha} \times \mu_{\beta}$ be a product of an $(\alpha,C_{\alpha})$-regular measure $\mu_{\alpha}$ and a $(\beta,C_{\beta})$-regular measure $\mu_{\beta}$. Then, writing $\gamma := \alpha + \beta$, the rescaled and renormalised measure  
\begin{equation}\label{form64} \mu_{z_{0},r_{0}} := r_{0}^{-\gamma} \cdot T_{x_{0},r_{0}}\mu = (r_{0}^{-\alpha}T_{x_{0},r_{0}}\mu_{\alpha}) \times (r_{0}^{-\beta}T_{y_{0},r_{0}}\mu_{\beta}) =: \mu_{\alpha,x_{0},r_{0}} \times \mu_{\beta,y_{0},r_{0}} \end{equation}
can again be expressed as a product of a $(\alpha,C_{\alpha})$-regular and $(\beta,C_{\beta})$-regular measures 
\begin{displaymath} \mu_{\alpha,x_{0},r_{0}} := r_{0}^{-\alpha}T_{x_{0},r_{0}}\mu_{\alpha} \quad \text{and} \quad \mu_{\beta,y_{0},r_{0}} := r_{0}^{-\beta}T_{y_{0},r_{0}}\mu_{\beta}. \end{displaymath}
In other words, $\mu_{z_{0},r_{0}}$ is a product of two new regular measures with precisely the same constants as $\mu_{\alpha}$ and $\mu_{\beta}$. \end{remark}
\begin{notation}\label{n1} If $\mu$ is a $(\gamma,C_{\gamma})$-regular measure on $\R^{2}$ (nearly always a product of two regular measures on $\R$ in this paper), and $B = B(z,r) \subset \R^{2}$ is a disc, we write
\begin{displaymath} \mu_{B} := r^{-\gamma} \cdot T_{B}\mu, \end{displaymath}
which is another $(\gamma,C_{\gamma})$-regular measure on $\R^{2}$. More accurate notation would be $\mu_{B,\gamma}$, but the index "$\gamma$" should always be clear from context. \end{notation}

We then repeat the statement of Proposition \ref{mainProp}:
\begin{proposition}\label{mainPropTec} Let $\alpha,\beta,\tau \in (0,1]$, $C_{\alpha},C_{\beta} > 0$, and assume that
\begin{equation}\label{form233} \sigma > \tfrac{\beta}{2 - \alpha}. \end{equation}
 Then, there exist $\zeta = \zeta(\alpha,\beta,C_{\alpha},\sigma,\tau) > 0$ such that $\zeta$ stays bounded away from zero as long as $\sigma$ stays bounded away from $\beta/(2 - \alpha)$, and the following holds.
Assume that there exists a parameter $\eta_{0} > 0$, and a scale $\Delta_{0} > 0$, such that
\begin{equation}\label{form201} \mathcal{H}^{\tau}_{\infty}(\{\theta \in [0,1] : \mu(B(1) \cap H_{\theta}(\spt \mu,\Delta^{-\sigma},[\Delta,1])) \geq \Delta^{\eta_{0}}\}) \leq \Delta^{\eta_{0}}, \qquad 0 < \Delta \leq \Delta_{0}, \end{equation}
whenever $\mu = \mu_{\alpha} \times \mu_{\beta}$ is a product of an $(\alpha,C_{\alpha})$-regular measure $\mu_{\alpha}$, and a $(\beta,C_{\beta})$-regular measure $\mu_{\beta}$. Then, there exists a parameter $\eta > 0$ and a scale $\delta_{0} > 0$, both depending only on $\alpha,\beta,C_{\alpha},C_{\beta},\sigma,\tau,\Delta_{0},\eta_{0}$, such that
\begin{displaymath} \mathcal{H}^{\tau}_{\infty}(\{\theta \in [0,1] : \mu(B(1) \cap H_{\theta}(\spt \mu,\delta^{-\sigma + \zeta},[\delta,1])) \geq \delta^{\eta}\}) \leq \delta^{\eta}, \qquad 0 < \delta \leq \delta_{0},\end{displaymath} 
whenever $\mu = \mu_{\alpha} \times \mu_{\beta}$ satisfies the same hypotheses as above.   \end{proposition}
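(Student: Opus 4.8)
The plan is to prove the contrapositive. Fix $\alpha,\beta,\tau,C_{\alpha},C_{\beta}$ and $\sigma>\beta/(2-\alpha)$, and suppose that, for the value of $\zeta$ chosen below, no admissible pair $(\eta,\delta_{0})$ works. Negating the conclusion, for every $\eta>0$ and every $\delta_{0}>0$ there are a product measure $\mu=\mu_{\alpha}\times\mu_{\beta}$ as in \nref{A1}--\nref{A2} and a scale $\delta\le\delta_{0}$ with
\begin{displaymath} \mathcal{H}^{\tau}_{\infty}(E)>\delta^{\eta},\qquad E:=\{\theta\in[0,1]:\mu(B(1)\cap H_{\theta}(\spt\mu,\delta^{-\sigma+\zeta},[\delta,1]))\ge\delta^{\eta}\}. \end{displaymath}
After fixing a base scale $\Delta$ (small in terms of $\alpha,C_{\alpha},\tau,\Delta_{0}$) and reducing to $\delta\approx\Delta^{N}$ for a large integer $N$, the goal is to contradict the hypothesis \eqref{form201}. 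Throughout write $\gamma:=\alpha+\beta$, so $\mu$ is $(\gamma,C_{\gamma})$-regular with $C_{\gamma}\sim C_{\alpha}C_{\beta}$ (Remark \ref{rem1}), and write $\dim K$ loosely for the $\delta$-covering exponent of a set $K$, i.e.\ $N_{\delta}(K)=\delta^{-\dim K+o(1)}$.

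First I would convert the single-scale hypothesis into multi-scale branching information. By Frostman's lemma pick a probability measure $\nu$ on $E$ with $\nu(B(x,r))\lesssim r^{\tau}\delta^{-\eta}$; since a Frostman measure dominates Hausdorff content, \eqref{form201} upgrades to $\int\mu_{B}(B(1)\cap H_{\theta}(\spt\mu_{B},\Delta^{-\sigma},[\Delta,1]))\,d\nu(\theta)\lesssim\delta^{-\eta}\Delta^{\eta_{0}}$ for every disc $B$, because each rescaled measure $\mu_{B}$ is again a product of an $(\alpha,C_{\alpha})$- and a $(\beta,C_{\beta})$-regular measure (Remark \ref{rem1}, Notation \ref{n1}) and high-multiplicity sets transform correctly under rescaling (Lemma \ref{lemma7}). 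Covering $B(1)\cap\spt\mu$ by $\lesssim C_{\gamma}\Delta^{-j\gamma}$ discs of radius $\Delta^{j}$ and summing --- the disc count being exactly cancelled by the renormalisation weight $\Delta^{j\gamma}$ --- gives $\int\mu(B(1)\cap H_{\theta}(\spt\mu,\Delta^{-\sigma},[\Delta^{j+1},\Delta^{j}]))\,d\nu(\theta)\lesssim C_{\gamma}\delta^{-\eta}\Delta^{\eta_{0}}$ for each $1\le j\le N$. Averaging over $j$ and pigeonholing (via Chebyshev) first in $\theta$ and then in $x$, together with a further application of \eqref{form201} at the scale $\delta$ itself, produces a subset $E'\subseteq E$ with $\mathcal{H}^{\tau}_{\infty}(E')\gtrsim\delta^{\eta}$ and, for each $\theta\in E'$: a set $G_{\theta}\subseteq B(1)\cap\spt\mu$ with $\mu(\spt\mu\setminus G_{\theta})\le\delta^{2\eta}$ along which $\m_{\theta}(x\mid[\Delta^{j+1},\Delta^{j}])<\Delta^{-\sigma}$ for all but a small fraction of scales $j$, and the bound $\mu(B(1)\cap H_{\theta}(\spt\mu,\delta^{-\sigma},[\delta,1]))\le\delta^{\eta_{0}}$. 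Feeding the first fact into the conditional-entropy identity \eqref{entropy}, telescoped over the scales $\Delta^{j}$ and bounded scale-by-scale by $\log\m_{\theta}$, controls the $\delta$-fibre multiplicities along $\theta$ through $G_{\theta}$; combined with the second fact and with the defining property of $\theta\in E$, this pins, for every $\theta\in E'$, the branching of $\pi_{\theta}(\spt\mu)$ --- on the chunk of $\mu$ whose $\theta$-fibres collapse --- to a window $[\gamma-\sigma-o(1),\ \gamma-\sigma+\zeta]$.

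Next I would select two directions and invoke Shmerkin's inverse theorem. Since $\mathcal{H}^{\tau}_{\infty}(E')>\delta^{\eta}$, the set $E'$ cannot lie in an interval of length $\le\delta^{\eta/\tau}$, so there are $\theta_{0},\theta_{1}\in E'$ with $\delta^{\eta/\tau}\le|\theta_{0}-\theta_{1}|\le1$ --- not too small provided $\eta\ll\tau$. Applying the linear change of coordinates $(x,y)\mapsto(x+\theta_{0}y,y)$ turns $\pi_{\theta_{0}}$ into the first-coordinate projection and $\pi_{\theta_{1}}$ into $\pi_{\theta}$ with $\theta:=\theta_{1}-\theta_{0}$ of size $\sim1$ (up to the harmless factor $\delta^{\eta/\tau}$), while $\spt\mu\subseteq A'\times\R$ with $A':=\pi_{\theta_{0}}(\spt\mu)=\spt\mu_{\alpha}+\theta_{0}\spt\mu_{\beta}$. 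I then record: (i) $A'$ is a union of translates of the $(\alpha,C_{\alpha})$-regular set $\spt\mu_{\alpha}$, hence carries at least $\alpha$-dimensional branching at every scale, uniformly, once scales are grouped into leaps of a length $\ell_{0}=\ell_{0}(\alpha,C_{\alpha})$; (ii) by the Step 2 pinning at $\theta_{0}$ and a pigeonhole over $\theta_{0}$-fibres, $\dim A'\approx\gamma-\sigma$ and a positive proportion of the $\delta$-fibres $B'$ of $\spt\mu$ over $A'$ satisfy $\dim B'\ge\sigma-\zeta$. Since $\theta_{1}\in E'$, the Step 2 pinning at $\theta_{1}$ shows, as in Section \ref{s:outline}, that $\pi_{\theta_{1}}(\spt\mu)=A'+\theta B'$ up to $\delta^{o(1)}$ errors, and hence $N_{\delta}(A'+\theta B')\le\delta^{-\zeta+o(1)}N_{\delta}(A')$. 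If $\zeta$ is taken small enough in terms of $\alpha,C_{\alpha},\tau$ --- which fixes $\ell_{0}$, hence $\Delta$ and $N$ --- Theorem \ref{shmerkin} applies to this near-equality and forces $A'$ to have $(1-o(1))$-dimensional branching on every leap on which $B'$ has positive-dimensional branching. As $B'$ has positive branching on a $\gtrsim\dim B'\ge\sigma-\zeta$ fraction of leaps, while $A'$ has $\ge\alpha$ branching on the remaining leaps by (i), we obtain
\begin{displaymath} (\sigma-\zeta)+\alpha\big(1-(\sigma-\zeta)\big)-o(1)\ \le\ \dim A'\ \le\ (\gamma-\sigma+\zeta)+o(1), \end{displaymath}
which rearranges, exactly as in Section \ref{s:outline}, to $(\sigma-\zeta)(2-\alpha)\le\beta+o(1)$, i.e.\ $\sigma-\zeta\le\beta/(2-\alpha)+o(1)$. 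Demanding also $\zeta<\sigma-\beta/(2-\alpha)$ --- possible precisely because $\sigma>\beta/(2-\alpha)$, and exactly why $\zeta$ stays bounded away from $0$ as long as $\sigma$ stays bounded away from $\beta/(2-\alpha)$ --- contradicts this once $\delta$ is small enough to absorb the $o(1)$. Hence a good pair $(\eta,\delta_{0})$ must exist, which proves the proposition.

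The principal obstacle, and essentially the whole content, is the interface between the two formalisms involved: the hypothesis and the conclusion are phrased through the covering/multiplicity quantities $\m_{\theta}$ and $H_{\theta}$ of Definitions \ref{def:mult}--\ref{def:highMult}, whereas Theorem \ref{shmerkin} is phrased through the entropy/branching profile of a measure along the dyadic filtration. Making the bridge rigorous requires, among other things: (a) first passing to a subset of $\mu$ whose branching is essentially uniform in scale, so that the two pigeonholes of Step 2 can be run simultaneously at all scales while still leaving a direction set of content $\gtrsim\delta^{\eta}$ --- throughout this one must not lose more than a $\delta^{o(1)}$ factor; (b) justifying the identification $\pi_{\theta_{1}}(\spt\mu)=A'+\theta B'$ up to $\delta^{o(1)}$ errors, which exploits the product structure of $\mu$ via a fibrewise comparison of $\pi_{\theta_{0}}(\mu)$ and $\pi_{\theta_{1}}(\mu)$; and (c) checking that the leap size demanded by Theorem \ref{shmerkin} in order to deduce (i) depends only on $\alpha$ and $C_{\alpha}$ (and $\tau$), never on $C_{\beta}$ or $\delta$ --- this is what makes $\zeta=\zeta(\alpha,\beta,C_{\alpha},\sigma,\tau)$, with the stated uniformity in $\sigma$, possible. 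Once these are in place, the dimension arithmetic is the elementary computation already carried out in Section \ref{s:outline}.
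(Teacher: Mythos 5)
Your proposal captures the broad architecture of the argument --- inverse-theorem driven, two directions, and the dimension arithmetic $(2-\alpha)(\sigma-\zeta)\le\beta$ --- and your points (a) and (c) are handled in the paper essentially as you anticipate. However, there is a genuine gap at your obstacle (b), and it is exactly the difficulty the paper isolates in Remark~\ref{rem:outline}. You choose $\theta_0,\theta_1\in E'$ at essentially macroscopic separation ($|\theta_0-\theta_1|\ge\delta^{\eta/\tau}$ with $\eta\ll\tau$) and then assert that $\pi_{\theta_1}(\spt\mu)=A'+\theta B'$ up to $\delta^{o(1)}$ errors, where $A'=\pi_{\theta_0}(\spt\mu)$ and $B'$ is a typical $\pi_{\theta_0}$-fibre. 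This identity is false in general: writing $A=\spt\mu_\alpha$, $B=\spt\mu_\beta$, the set $A'+\theta B$ equals $A+\theta_0 B+\theta B$ (a three-fold sum, which can be strictly larger than $A+\theta_1 B$), while taking $B'$ to be a single fibre combines it with unrelated points of $A'$, and neither set is $\pi_{\theta_1}(A\times B)=A+\theta_1 B$. The $\pi_{\theta_0}$-fibres of a product are genuinely non-uniform, and no ``fibrewise comparison'' recovers a sumset identity at separation of order $1$.

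The paper's resolution requires two pieces missing from your proposal. First, the separation must be small: one pigeonholes a scale $\bar\delta\in[\delta,\delta^{\omega(\tau)}]$ at which $\nu$ has non-trivial branching between $\bar\delta$ and $\bar\delta^{1/2}$ (Lemma~\ref{lemma8}), uses that branching to find $\theta_0,\theta_1$ with $\bar\delta^{(1+\epsilon)/2}\le|\theta_0-\theta_1|\le\bar\delta^{1/2}$ and $\mu(K_{\theta_0}\cap K_{\theta_1})\gtrsim\delta^{4\eta}$, and transfers the counter-assumption from scale $\delta$ to scale $\bar\delta$ via Proposition~\ref{prop3}; a lower bound on $\mathcal{H}^{\tau}_{\infty}(E)$ by itself does not place two directions at a prescribed small separation. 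Second, one must localise to a tube $\mathbf{T}_0=\pi_{\theta_0}^{-1}(I)$ of width $\bar\delta^{1/2}$ and build the discrete measures $\Pi_1,\Pi_2$ from that tube only: for $(a,b)\in(A\times B)\cap\mathbf{T}_0$, replacing $b$ by the representative $y_B$ of its $\bar\delta^{1/2}$-disc introduces error $|\theta_1-\theta_0|\cdot|b-y_B|\lesssim\bar\delta^{1/2}\cdot\bar\delta^{1/2}=\bar\delta$, which is below the resolution --- this is \eqref{form121} and is exactly what produces the convolution lower bound \eqref{form130} fed into Theorem~\ref{shmerkin}. At macroscopic separation the same substitution error is $\sim\bar\delta^{1/2}\gg\bar\delta$, so the inverse theorem never becomes applicable. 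Until these two ingredients are supplied, the identification in your (b) is assumed rather than established.
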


\begin{remark}\label{rem:outline} This remark is a continuation of the proof outline presented in Section \ref{s:outline}. The proof of Proposition \ref{mainPropTec} proceeds roughly in the manner we described there, with $A = \spt \mu_{\alpha}$, $B = \spt \mu_{\beta}$, and "exceptional set"
\begin{displaymath} E = \{\theta \in [0,1] : \mu(B(1) \cap H_{\theta}(\spt \mu,\delta^{-\sigma + \zeta},[\delta,1])) \geq \delta^{\eta}\}. \end{displaymath}
We make the counter assumption $\mathcal{H}^{\tau}_{\infty}(E) \geq \delta^{\eta}$, and fix $\theta_{0} \in E$. The purpose here is to point out two technical difficulties which we glossed over in Section \ref{s:outline}. The first one is related to the following sentence above \eqref{form152}: \emph{Recalling that $A \times B \subset A' \times \R$, up to a change of coordinates, one more precisely rewrites $A \times B = A' \times B'$, where $A' = A + \theta_{0}B$, and $B'$ represents the intersection of $A \times B$ with the "typical fibre" under the map $\pi_{\theta_{0}}(x,y) = x + \theta_{0}y$.} This was a rather inaccurate description: even though $A \times B \subset A' \times \R$, the fibres $(A \times B) \cap \pi_{\theta_{0}}^{-1}\{t\}$ are not so uniform, in general, that we could rewrite $A \times B = A' \times B'$.

We do the following instead, imitating an idea which first appeared in \cite{MR4055989}. We fix a small scale $\delta > 0$, and let $\mathbf{T} = \pi_{\theta_{0}}^{-1}(I)$ be an arbitrary tube of width $\delta^{1/2}$. Then, it roughly speaking turns out that there exists a product set of the form $A' \times B'$, where $A' = \pi_{\theta_{0}}((A \times B) \cap \mathbf{T})$, with the property
\begin{equation}\label{form151} N_{\delta}(A' + (\theta_{1} - \theta_{0})B') \sim N_{\delta}(\pi_{\theta}((A \times B) \cap \mathbf{T})), \qquad |\theta - \theta_{0}| \leq \delta^{1/2}. \end{equation}
This corresponds to \eqref{form121} in the actual proof (in reality, one needs to replace $A' \times B'$ by a "fat" subset $G \subset A' \times B'$, but this is only a minor technical problem). To summarise: rewriting $A \times B = A' \times B'$ is hopeless, but instead it is possible to associate to $(A \times B) \cap \mathbf{T}$ a product set $A' \times B' = \pi_{\theta_{0}}((A \times B) \cap \mathbf{T}) \times B'$, in the sense that \eqref{form151} holds.

In Section \ref{s:outline}, the next step was to \emph{fix another direction $\theta_{1} \in E$ such that $|\theta_{0} - \theta_{1}| \sim 1$.} In reality, with \eqref{form151} in mind, we rather need to find $\theta_{1} \in E$ such that $|\theta_{1} - \theta_{0}| \approx \delta^{1/2}$. This is non-trivial: our only assumption on $E$ is that $\mathcal{H}^{\tau}_{\infty}(E) \geq \delta^{\eta}$, and it may be impossible to find a pair of points $\theta_{0},\theta_{1} \in E$ with separation $|\theta_{0} - \theta_{1}| \approx \delta^{1/2}$, for a given scale $\delta > 0$. This issue is resolved by pigeonholing another scale $\bar{\delta} \in [\delta,1]$, which is not too much larger than $\delta$, and for which we can find two points $\theta_{0},\theta_{1} \in E$ such that $|\theta_{0} - \theta_{1}| \approx \bar{\delta}^{1/2}$. This is accomplished in Section \ref{s:pigeonholing}. 

After such a scale $\bar{\delta}$ has been located, the steps mentioned above for the pair $(\delta,\bar{\delta}^{1/2})$ are, in reality, carried out for the pair $(\bar{\delta},\bar{\delta}^{1/2})$. In particular, the tube $\mathbf{T} = \pi_{\theta_{0}}^{-1}(I)$ will have width $|I| = \bar{\delta}^{1/2}$. It will be chosen (in Section \ref{s:tubeSelection}) in such a manner that
\begin{equation}\label{form153} N_{\bar{\delta}}(\pi_{\theta_{1}}((A \times B) \cap \mathbf{T})) \approx N_{\bar{\delta}}(\pi_{\theta_{0}}((A \times B) \cap \mathbf{T})) = N_{\bar{\delta}}(A'), \end{equation}
see \nref{T2}. Via an analogue of \eqref{form151} at scale $\bar{\delta}$, this roughly implies that 
\begin{displaymath} N_{\bar{\delta}}(A' + (\theta_{1} - \theta_{0})B') \stackrel{\eqref{form151}}{\approx} N_{\bar{\delta}}(\pi_{\theta_{1}}((A \times B) \cap \mathbf{T})) \stackrel{\eqref{form153}}{\approx} N_{\bar{\delta}}(A'). \end{displaymath}
This equation is a more accurate analogue of \eqref{form152} from Section \ref{s:outline}. It can still be used in the same manner to draw conclusions about the "branching" structure of $A'$ and $B'$.  \end{remark}

We begin the proof of Proposition \ref{mainPropTec}. We fix the parameters $\alpha,\beta,\tau \in (0,1]$, $C_{\alpha},C_{\beta} > 0$, and we let $\Delta_{0},\eta_{0} > 0$ be constants such that \eqref{form201} holds for all products $\mu = \mu_{\alpha} \times \mu_{\beta}$ of an $(\alpha,C_{\alpha})$-regular measure $\mu_{\alpha}$ and a $(\beta,C_{\beta})$-regular measure $\mu_{\beta}$. 

\subsection{Choosing constants}\label{s:constants} We take a moment to list and specify a few other constants. First of all, we write $\gamma := \alpha + \beta$ and $C_{\gamma} := 5C_{\alpha}C_{\beta}$. Then $\mu = \mu_{\alpha} \times \mu_{\beta}$ is a $(\gamma,C_{\gamma})$-regular measure on $\R^{2}$. Second, we note that the condition $\sigma > \beta/(2 - \alpha)$ is equivalent to 
\begin{displaymath} D(\alpha,\beta,\sigma) := (2 - \alpha) \sigma - \beta = (1 - \alpha)\sigma + \alpha - \gamma + \sigma > 0. \end{displaymath}
We specify small constants $\epsilon,\rho,\zeta_{0} \in (0,1)$, and a large integer $m_{0} \in \N$, such that the following inequality holds for all $m \geq m_{0}$:
\begin{equation}\label{form237} \gamma - \sigma + 10\zeta_{0} < (1 - \alpha - \epsilon)(\sigma - 10(\epsilon + \zeta_{0})) + (1 - \rho)(1 - \tfrac{30(\epsilon + \zeta_{0})}{\alpha \rho} - \tfrac{10C_{\alpha}}{\alpha \rho m})\alpha. \end{equation}
Since $\gamma - \sigma = (1 - \alpha)\sigma + \alpha - D(\alpha,\beta,\sigma) < (1 - \alpha)\sigma + \alpha$, it is qualitatively clear that the constants $\epsilon,\rho,\zeta_{0}$, and $m_{0}$ can be chosen so that \eqref{form237} holds, but let us be more specific about their dependencies. The constant $\rho > 0$ should be chosen first: if $0 < \rho < \tfrac{1}{2} \cdot D(\alpha,\beta,\sigma)$, then
\begin{displaymath} \gamma - \sigma = (1 - \alpha)\sigma + \alpha < (1 - \alpha)\sigma + (1 - \rho)\alpha - \tfrac{D(\alpha,\beta,\sigma)}{2}, \end{displaymath}
since $\alpha \leq 1$. Next, the remaining three constants $\epsilon,\zeta_{0} \in (0,1)$, and $m_{0} \geq 1$, can be chosen in an arbitrary order such that \eqref{form237} holds for all $m \geq m_{0}$. Evidently $\epsilon,\zeta_{0}$ only depend on $\alpha,\beta$ and $\rho$ (hence on $\alpha,\beta$, and $D(\alpha,\beta,\sigma)$). The constant $m_{0}$ additionally depends on the regularity constant "$C_{\alpha}$".

The notation "$\zeta_{0}$" suggests that this constant should have something to do with the constant $\zeta = \zeta(\alpha,\beta,C_{\alpha},\sigma,\tau) > 0$, whose existence is the main claim in Proposition \ref{mainProp}. To specify this connection, we should state Shmerkin's inverse theorem \cite[Theorem 2.1]{Sh}, but the statement is so long that we postpone the full details to Theorem \ref{shmerkin}. However, the theorem begins with the following words: \emph{For every $\epsilon > 0$ and $m_{0} \geq 1$ there exists $\kappa = \kappa(\epsilon,m_{0}) > 0$ and $m \geq m_{0}$ such that the following holds for large enough $N$.} Now, the number $\zeta > 0$ from the claim of Proposition \ref{mainProp} can be taken to be any parameter satisfying
\begin{equation}\label{form238} 0 < O \cdot \zeta \leq \min\{\zeta_{0},\kappa(\epsilon,m_{0})\}, \end{equation}
where $\epsilon,\zeta_{0},m_{0}$ are familiar from the discussion above, and $O = O(\tau) > 0$ will be a constant depending only $\tau$. In particular, since these constants together only depend on $\alpha,\beta,\sigma,\tau$, and $C_{\alpha}$, the same will be true for the constant $\zeta > 0$.

It is not a mistake (as far as I know!) that the constants $\epsilon,\rho,\zeta_{0},m_{0}$ do not depend on the regularity constant "$C_{\beta}$" appearing in Proposition \ref{mainPropTec}. This constant is present in $C_{\gamma} = 5C_{\alpha}C_{\beta}$, and will also influence the thresholds for the parameters "$\eta$" and "$\delta_{0}$".

\subsection{The counter assumption} Now that we have clarified the roles of future constants, we make a counter assumption: the conclusion of the proposition fails for a certain product measure $\mu = \mu_{\alpha} \times \mu_{\beta}$, where $\mu_{\alpha}$ is $(\alpha,C_{\alpha})$-regular, $\mu_{\beta}$ is $(\beta,C_{\beta})$-regular, for a certain small scale $\delta \in (0,\Delta_{0}]$, and a certain parameter $\eta \in (0,\eta_{0}]$:
\begin{equation}\label{form200} \mathcal{H}^{\tau}_{\infty}(\{\theta \in [0,1] : \mu(B(1) \cap H_{\theta}(K,\delta^{-\sigma + \zeta},[\delta,1])) \geq \delta^{\eta}\}) \geq \delta^{\eta},\end{equation}
Here $K := \spt \mu$. The reader should think that $\delta \ll \Delta_{0}$ and $\eta \ll \eta_{0}$, and additionally that $\eta$ is small compared to the previously fixed small constants $\epsilon,\eta_{0},\zeta \leq \zeta_{0},\tau$. We should be on the safe side, if we assume that
\begin{equation}\label{form256} 0 < \eta < C^{-1} \cdot (\alpha \beta \epsilon \rho \tau \eta_{0}\zeta)^{C} \end{equation}
for a suitable absolute constant $C \geq 1$, so in particular "$\eta$" will not depend on $\Delta_{0}$. The threshold for $\delta > 0$ for which \eqref{form200} leads to a contradiction will only depend (in principle effectively) on the parameters $\alpha,\beta,C_{\alpha},C_{\beta},\sigma,\tau,\Delta_{0}$, and $\eta_{0}$, but this dependence will not be tracked explicitly. 

\begin{remark}\label{dyadicRem} On several occasions, we will need to assume that the scale $\delta > 0$ has a special form: very commonly $\delta > 0$ needs to be  a dyadic number, and later on \emph{a fortiori} we need that $\delta = (2^{-m})^{N}$ for some integer $N \geq 1$, where $m \geq m_{0}$ is the integer produced by Shmerkin's inverse theorem with initial data $\epsilon$ and $m_{0}$ (see above \eqref{form238}). Such assumptions are harmless: if our counter assumption \eqref{form200} involves a parameter $\delta > 0$, which is not of the correct form, then there exists a parameter $\delta' \in [\delta,C\delta]$ of the correct form, where $C \geq 1$ only depends on the usual parameters listed above. But now
\begin{displaymath} H_{\theta}(K,\delta^{-\sigma + \zeta},[\delta,1]) \subset H_{\theta}(K,\tfrac{1}{C} \cdot (\delta')^{-\sigma + \zeta},[\delta',1]) \subset H_{\theta}(K,(\delta')^{-\sigma + 2\zeta},[\delta',1]), \end{displaymath} 
where the first inclusion follows from Lemma \ref{lemma6}, and the second inclusion is true as soon as $\delta' > 0$ is so small that $\tfrac{1}{C} \geq (\delta')^{\zeta}$. Using this inclusion, we may find arbitrarily small values of $\delta'$ (a scale of the "correct form") such that the counter assumption \eqref{form200} holds with "$2\zeta$" in place of "$\zeta$". We can then use this variant of \eqref{form200}, instead, to derive a contradiction. In the sequel, we will assume that $\delta$ (and a certain other scales "$\bar{\delta}$" derived from $\delta$) is of the "correct form" without further remark. \end{remark}

Since the set $E := \{\theta \in [0,1] : \mu(B(1) \cap H_{\theta}(K,\delta^{-\sigma + \zeta},[\delta,1])) \geq \delta^{\eta}\}$ is assumed, by \eqref{form200}, to have $\tau$-dimensional Hausdorff-content bounded from below by $\delta^{\eta}$, we may find (see \cite[Theorem 8.8]{zbMATH01249699})) a $(\tau,C\delta^{-\eta})$-Frostman probability measure $\nu$ with $\spt \nu \subset E$. Here $C > 0$ is an absolute constant. From this point on, the counter assumption \eqref{form200} will only be used via the following:
\begin{equation}\label{form255} \mu(B(1) \cap H_{\theta}(K,\delta^{-\sigma + \zeta},[\delta,1])) \geq \delta^{\eta}, \qquad \theta \in \spt \nu. \end{equation}

\begin{notation} We will denote by "$O$" a generic large constant which may depend on $\alpha,\beta,C_{\alpha},C_{\beta},C_{\gamma},\sigma,\tau$. In similar spirit, we will denote by "$\omega$" a small positive constant, which is bounded away from zero in a manner depending only on $\alpha,\beta,C_{\alpha},C_{\beta},C_{\gamma},\sigma,\tau$. In fact, $\omega = O^{-1}$. If either $O$ or $\omega$ only depends on a subset of the parameters above, this will occasionally be emphasised by writing, for example, "$O(\alpha,\beta)$" instead of "$O$". 

The precise values of the constants "$O$" and "$\omega$" may vary from line to line. This will often lead to inequalities of the form "$2O \leq O$", which are not typos. \end{notation}

\subsection{Pigeonholing a branching scale for $\nu$}\label{s:pigeonholing} The measure $\nu$ may have "no branching" between the scales $\delta^{1/2}$ and $\delta$. Defining this defect carefully is not worth the effort, but we roughly mean the possibility that
\begin{displaymath} N(\spt \nu,\delta) \approx N(\spt \nu,\delta^{1/2}). \end{displaymath}
Fortunately, it follows from the $\tau$-Frostman property of $\nu$ that opposite behaviour must occur at some scale $\bar{\delta} > \delta$, which also satisfies the (roughly) converse inequality $\bar{\delta} \leq \delta^{\omega(\alpha,\beta,\sigma,\tau)}$. Finding the scale $\bar{\delta}$, and making these statements more precise, is the goal of this section. The final conclusion will be \eqref{form73}. 

Recall that $\nu$ is a $(\tau,C\delta^{-\eta})$-Frostman probability measure satisfying \eqref{form255}. Recall that $\epsilon = \epsilon(\alpha,\beta,\sigma) > 0$ was one of the constants fixed in Section \ref{s:constants}. We define the following increasing scale sequence: $\delta_{0} := \delta^{(1 + \epsilon)/2}$, and 
\begin{displaymath} \delta_{j + 1} := \delta_{j}^{1/(1 + \epsilon)}, \qquad j \geq 1. \end{displaymath} 
Thus $\delta_{1} = \delta^{1/2}$, and $\delta_{j} = \delta_{0}^{1/(1 + \epsilon)^{j}} = (\delta^{1/2})^{1/(1 + \epsilon)^{j - 1}}$ for $j \geq 0$. Recall that $\calD_{\delta_{j}}$ is the partition of $[0,1)$ into dyadic intervals of length $\delta_{j}$ (if these numbers are not dyadic to begin with, the closest dyadic numbers would work as well). We mention that $\mathcal{D}_{\delta_{j + 1}}$ consists of intervals longer than those in $\mathcal{D}_{\delta_{j}}$. We will prove the following lemma:
\begin{lemma}\label{lemma8} There exists an index $j \leq O(\alpha,\beta,\sigma,\tau)$, and a Borel set $G \subset [0,1]$ with $\nu(G) \geq \omega(\alpha,\beta,\sigma,\tau)$ such that the renormalised measure $\bar{\nu} = \nu(G)^{-1} \cdot \nu|_{G}$ has the following properties:
\begin{itemize}
\item $\bar{\nu}$ is a $(\tau,O\delta^{-\eta})$-Frostman measure with $O = O(\alpha,\beta,\sigma,\tau) > 1$, 
\item If $I \in \mathcal{D}_{j + 1}$ with $\bar{\nu}(I) > 0$, then
\begin{equation}\label{form149} \bar{\nu}_{I}(J) \leq \delta^{\omega(\alpha,\beta,\sigma,\tau)}, \qquad J \in \mathcal{D}_{\delta_{j}}(I). \end{equation}
\end{itemize}
Here $\bar{\nu}_{I} = \bar{\nu}(I)^{-1} \cdot \bar{\nu}|_{I}$, and $\mathcal{D}_{\delta_{j}}(I)$ refers to the intervals in $\mathcal{D}_{\delta_{j}}$ which are contained in $I$. Note that the length of these intervals is $|J| = \delta_{j} = \delta_{j + 1}^{1 + \epsilon} = |I|^{1 + \epsilon}$.
\end{lemma}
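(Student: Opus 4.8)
\emph{Plan.} The entropy/pigeonhole heuristic sketched in Remark~\ref{rem:outline} is \emph{not} quite enough here (see the last paragraph), so instead I would build a multiscale tree of intervals by repeatedly passing to the heaviest sub-interval at the next scale, declaring an interval \emph{good} once the conditional mass at the next scale has already spread out, and then use the Frostman property of $\nu$ at the single scale $\delta^{1/2}$ to show that a positive total mass of good intervals must appear within boundedly many steps. The level at which this happens will be the index $j$.

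\emph{Construction.} Since $\spt\nu\subset E\subset[0,1]$, we may assume $\nu$ is a probability measure on $[0,1]$. Let $\epsilon=\epsilon(\alpha,\beta,\sigma)$ be the constant of Section~\ref{s:constants}, fix a large integer $J=J(\epsilon,\tau)\in\N$ with $c_0:=1/(2(1+\epsilon)^J)<\tau/8$, and let $\omega>0$ be a small constant, pinned down at the very end, satisfying $\omega\le\min\{4^{-J-1},\,\tau/(8J)\}$, so that $\omega=\omega(\alpha,\beta,\sigma,\tau)$. I build strata $\mathcal{N}^{(m)}\subset\mathcal{D}_{\delta_m}$ for $m=J+1,J,\ldots,1$. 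Put $\mathcal{N}^{(J+1)}:=\{I\in\mathcal{D}_{\delta_{J+1}}:\nu(I)>0\}$, a family of at most $2\delta^{-c_0}$ intervals. Given $\mathcal{N}^{(m)}$ with $m\ge 2$: for $I\in\mathcal{N}^{(m)}$, the intervals $J\in\mathcal{D}_{\delta_{m-1}}(I)$ are its \emph{children} (there are $\delta_m/\delta_{m-1}=\delta_m^{-\epsilon}$ of them, since $\delta_{m-1}=\delta_m^{1+\epsilon}$); a child $J$ is \emph{light} if $\nu_I(J)\le\delta^{\omega}$ and \emph{heavy} otherwise, and $I$ is \emph{good} if its light children carry $\nu_I$-mass $\ge\tfrac12$, \emph{recursive} otherwise. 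Set $\mathcal{N}^{(m-1)}:=\bigcup\{\text{heavy children of }I:I\in\mathcal{N}^{(m)}\text{ recursive}\}$, stopping once $\mathcal{N}^{(1)}\subset\mathcal{D}_{\delta_1}$ is produced; and for $2\le m\le J+1$ let $G^{(m)}:=\bigcup\{I\cap(\text{union of light children of }I):I\in\mathcal{N}^{(m)}\text{ good}\}$. Note that each good $I\in\mathcal{N}^{(m)}$ has $\nu(I\cap\text{light part})\ge\tfrac12\nu(I)$, so each light child $J$ of it satisfies $\nu(J)/\nu(I\cap\text{light part})\le 2\nu_I(J)\le 2\delta^{\omega}$, while heavy children meet $G^{(m)}$ in a null set.

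\emph{Extracting $j$.} If $\nu(G^{(m)})\ge\omega$ for some $m\in\{2,\ldots,J+1\}$, I take $j:=m-1\in\{1,\ldots,J\}$, $G:=G^{(m)}$ and $\bar\nu:=\nu_G$. Any $I\in\mathcal{D}_{\delta_{j+1}}=\mathcal{D}_{\delta_m}$ with $\bar\nu(I)>0$ must (the strata being disjoint dyadic families, and only good intervals contributing to $G^{(m)}$) be a good member of $\mathcal{N}^{(m)}$ with $G\cap I=I\cap(\text{light part of }I)$; hence $\bar\nu|_I$ lives on the light children of $I$, and the inequality above gives $\bar\nu_I(J)\le 2\delta^{\omega}\le\delta^{\omega/2}$ for every $J\in\mathcal{D}_{\delta_j}(I)$, for $\delta$ small. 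Also $\bar\nu(B(x,r))\le\nu(B(x,r))/\nu(G)\le(C\delta^{-\eta}/\omega)\,r^{\tau}$, so $\bar\nu$ is $(\tau,O\delta^{-\eta})$-Frostman; this proves the lemma (after renaming $\omega/2$ as $\omega$). It remains to exclude the case $\nu(G^{(m)})<\omega$ for all $m\in\{2,\ldots,J+1\}$. Let $n_m:=\nu(\bigcup\mathcal{N}^{(m)})$, so $n_{J+1}=1$. Since good members of $\mathcal{N}^{(m)}$ contribute at least half their mass to $G^{(m)}$, $\nu(G^{(m)})<\omega$ forces the good members to carry $\nu$-mass $<2\omega$, hence the recursive ones carry $>n_m-2\omega$, and $n_{m-1}=\sum_{I\text{ recursive}}\nu(\text{heavy part of }I)>\tfrac12(n_m-2\omega)$. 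With $\omega\le 4^{-J-1}$, induction yields $n_m>4^{-(J+1-m)}$, so $n_1>4^{-J}$. On the other hand a recursive interval has at most $\delta^{-\omega}$ heavy children (each of $\nu_I$-mass $>\delta^{\omega}$), so $|\mathcal{N}^{(1)}|\le\delta^{-\omega J}|\mathcal{N}^{(J+1)}|\le 2\delta^{-\omega J-c_0}$, and since $\mathcal{N}^{(1)}$ consists of intervals of length $\delta_1=\delta^{1/2}$ and $\nu$ is $(\tau,C\delta^{-\eta})$-Frostman,
\[
4^{-J}<n_1\le|\mathcal{N}^{(1)}|\cdot\sup_x\nu(B(x,\delta^{1/2}))\le 2\delta^{-\omega J-c_0}\cdot C\delta^{\tau/2-\eta}=2C\delta^{\tau/2-\eta-\omega J-c_0}.
\]
By \eqref{form256} we have $\eta<\tau/16$, and $\omega J\le\tau/8$, $c_0<\tau/8$, so the exponent exceeds $3\tau/16>0$; the right-hand side therefore tends to $0$ as $\delta\to0$, a contradiction once $\delta$ is small.

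\emph{Main obstacle.} The subtle point — the reason one cannot simply pigeonhole a scale $j$ with large conditional entropy $H(\nu,\mathcal{D}_{\delta_j}\mid\mathcal{D}_{\delta_{j+1}})$ and stop — is that such a scale may still contain intervals $I$ possessing a single heavy child $J$ (with the rest of $\nu_I$ spread extremely thinly), which directly violates \eqref{form149}; the whole purpose of the tree is to "zoom past" these concentrated intervals by recursing into the heavy children. The only mechanism forcing the recursion to terminate in $J$ steps is the Frostman bound at scale $\delta^{1/2}$, and the only genuine bookkeeping is to fix the constants in the order $J$, then $\omega$, then (using \eqref{form256}) $\eta$, and finally $\delta$ small enough. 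A secondary nuisance is that the tree keeps only one layer of children per recursive interval, so one must check — as above — that a definite \emph{fraction} of the mass survives at each step (this is what the threshold $\tfrac12$ in the definition of "good" buys), rather than merely a power of $\delta$.
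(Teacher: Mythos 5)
Your proof is correct, and it takes a genuinely different route from the paper's. The paper argues via entropy: the Frostman bound gives $H(\nu,\mathcal{D}_{\delta_0})\gtrsim\tau\log(1/\delta)$, while trivially $H(\nu,\mathcal{D}_{\delta_n})\lesssim\tfrac{\tau}{4}\log(1/\delta)$ for $n\sim\epsilon^{-1}\log(1/\tau)$; pigeonholing the telescoping sum of conditional entropies yields a scale $j$ with $H(\nu,\mathcal{D}_{\delta_j}\mid\mathcal{D}_{\delta_{j+1}})\gtrsim\tau\log(1/\delta)$. It then restricts (Markov) to those $I\in\mathcal{D}_{\delta_{j+1}}$ with $H(\nu_I,\mathcal{D}_{\delta_j})$ large, and inside each such $I$ applies Jensen's inequality to show that the light children — those with $\nu_I(J)<\delta^{\bar\tau/4}$ — carry a definite fraction of $\nu_I$-mass; restricting once more to the union of light children produces $\bar\nu$. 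Regarding your closing paragraph: the scenario you describe (one very heavy child plus a thin remainder, compatible with large $H(\nu_I,\mathcal{D}_{\delta_j})$ yet violating \eqref{form149}) is precisely what the paper's Jensen step is for — it simply \emph{excises} the heavy child at the pigeonholed scale, rather than zooming past it via recursion as your tree does. So the entropy approach is not incomplete; it kills the heavy child where it stands instead of following it downward. Both proofs invoke the Frostman hypothesis exactly once: the paper at scale $\delta_0$ to open an entropy gap, you at scale $\delta_1=\delta^{1/2}$ to cap $|\mathcal{N}^{(1)}|\cdot\sup\nu(B(x,\delta^{1/2}))$ and contradict $n_1\gtrsim 4^{-J}$. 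Your stopping-time construction — descending into heavy children of recursive intervals, declaring an interval good once half its mass has spread, and using the bound of $\delta^{-\omega}$ heavy children per level to control the branching — is a clean and entirely elementary alternative that avoids entropy and Jensen altogether, at the (cosmetic) cost of a somewhat more elaborate bookkeeping of strata and the mass-survival estimate $n_{m-1}>\tfrac12(n_m-2\omega)$.
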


The relation between Lemma \ref{lemma8}, and our search for the scale "$\bar{\delta}$", is simply that 
\begin{displaymath} \bar{\delta} := \delta_{j + 1}^{2}, \quad \text{hence} \quad \delta_{j + 1} = \bar{\delta}^{1/2}. \end{displaymath}
Since $1 < 1 + \epsilon \leq 2$, we then have $\bar{\delta} \leq \delta_{j} \ll \bar{\delta}^{1/2}$. The inequality \eqref{form149} informally says that the measure $\bar{\nu}$ has non-trivial branching between the scales $\delta_{j}$ and $\delta_{j + 1} = \bar{\delta}^{1/2}$, and so in particular between the scales $\bar{\delta}$ and $\bar{\delta}^{1/2}$.

We begin the search for $j_{0}$ and $G$. Since $\nu$ is a $(\tau,C\delta^{-\eta})$-Frostman probability measure on $[0,1]$, we have the uniform bound
\begin{displaymath} \nu(I_{0}) \leq C\delta^{-\eta}\delta_{0}^{\tau} \qquad I_{0} \in \mathcal{D}_{\delta_{0}}, \end{displaymath} 
which gives the following lower bound for the entropy of $\nu$ at scale $\delta_{0}$:
\begin{equation}\label{form21} H(\nu,\mathcal{D}_{\delta_{0}}) = \sum_{I_{0} \in \mathcal{D}_{\delta_{0}}} \nu(I_{0}) \log \tfrac{1}{\nu(I_{0})} \geq \log \delta_{0}^{-\tau} - \log C - \log \delta^{-\eta} \geq \log \delta^{-\tau/2}. \end{equation}
In the final inequality, we used that $\delta_{0}^{-\tau} = \delta^{-\tau (1 + \epsilon)/2} = \delta^{-\tau/2} \cdot \delta^{-\epsilon \tau/2}$, and the logarithm of the second factor exceeds the "error term" $\log C + \log \delta^{-\eta}$ if $\delta > 0$ is sufficiently small, and $\eta < \tau \epsilon/2$ (as is implied by \eqref{form256}).

Now, let $n \sim_{\epsilon,\tau} 1$ (therefore $n \leq O(\alpha,\beta,\sigma,\tau)$) be the smallest integer satisfying
\begin{equation}\label{form257} \frac{1}{2(1 + \epsilon)^{n - 1}} \leq \frac{\tau}{4}. \end{equation}
This is not too important, but then in fact $n \sim \epsilon^{-1} \cdot \log (1/\tau)$. Then, observing that $\mathcal{D}_{\delta_{n}}$ consists of intervals of length $\delta_{0}^{1/(1 + \epsilon)^{n}} = \delta^{1/[2(1 + \epsilon)^{n - 1}]}$, we have
\begin{displaymath} H(\nu,\mathcal{D}_{\delta_{n}}) \leq \log |\mathcal{D}_{\delta_{n}}| \leq \log \delta^{-1/[2(1 + \epsilon)^{n - 1}]} \leq \log \delta^{-\tau/4}. \end{displaymath} 
Combining this estimate with \eqref{form21}, and using the conditional entropy formula \eqref{entropy} repeatedly to the nested partitions $\mathcal{D}_{\delta_{j}}$, we find that
\begin{displaymath} \log \delta^{-\tau/4} \leq H(\nu,\mathcal{D}_{\delta_{0}}) - H(\nu,\mathcal{D}_{\delta_{n}}) = \sum_{j = 0}^{n - 1} H(\nu,\mathcal{D}_{\delta_{j}} \mid \mathcal{D}_{\delta_{j + 1}}). \end{displaymath}
Consequently, there exists $j \in \{0,\ldots,n - 1\}$ with the property
\begin{equation}\label{form23} \sum_{I \in \mathcal{D}_{\delta_{j + 1}}} \nu(I) \cdot H(\nu_{I},\mathcal{D}_{\delta_{j}}) = H(\nu,\mathcal{D}_{\delta_{j}} \mid \mathcal{D}_{\delta_{j + 1}}) \geq \log \delta^{-\tau/(4n)}. \end{equation} 
Here $\nu_{I} = \nu(I)^{-1} \nu|_{I}$ for any $I \in \mathcal{D}_{\delta_{j + 1}}$ with $\nu(I) \neq 0$. As we discussed after the statement of Lemma \ref{lemma8}, we set $\bar{\delta} = \delta_{j + 1}^{2}$ (note that $\delta_{j + 1} \geq \delta_{1} = \delta^{1/2}$, so $\bar{\delta} \geq \delta$). Let us record that
\begin{equation}\label{form75} \bar{\delta} \leq \delta^{1/(1 + \epsilon)^{n - 1}} \leq \delta^{\tau/4},  \end{equation}
by \eqref{form257}, or equivalently $\delta \geq \bar{\delta}^{4/\tau}$. Indeed, since "$n$" is the smallest number satisfying \eqref{form257}, we have $1/(2(1 + \epsilon)^{n - 2}) > \tau/4$, and hence $1/(1 + \epsilon)^{n - 1} \geq (1 + \epsilon)^{-1} \cdot \tau/2 \geq \tau/4$. Thus $\bar{\delta}$ remains somewhat comparable to $\delta$, in a manner depending only on the fixed parameter "$\tau$". Motivated by \eqref{form23}, we set
\begin{displaymath} \bar{\tau} := \tau/(4n) \sim \epsilon \cdot \tau/\log(1/\tau) \geq \epsilon \tau^{2}, \end{displaymath}
and we note that $\bar{\tau} \geq \omega(\alpha,\beta,\sigma,\tau)$.

The choice of the index $j \in \{0,\ldots,n - 1\}$ at \eqref{form23} roughly tells us that for many "long" intervals $I \in \mathcal{D}_{\delta_{j + 1}}$ with $\nu(I) > 0$, the re-normalised restriction $\nu_{I}$ is not concentrated on very few "short" sub-intervals of $I$ in the family $\mathcal{D}_{\delta_{j}}$. By restricting $\nu$ a little bit, we may replace the word "many" by "all", as we will see next. In fact, since certainly
\begin{displaymath} H(\nu_{I},\mathcal{D}_{\delta_{j}}) \leq \log \delta^{-1}, \qquad I \in \mathcal{D}_{\delta_{j + 1}}, \end{displaymath}
we may infer from \eqref{form23} that there exists a collection of arcs $\mathcal{G}_{j + 1} \subset \mathcal{D}_{\delta_{j + 1}}$ of total $\nu$-measure 
\begin{equation}\label{form239} \nu(\cup \mathcal{G}_{j + 1}) \geq \bar{\tau}/2 \end{equation} 
with the properties $\nu(I) > 0$ and $H(\nu_{I},\mathcal{D}_{\delta_{j}}) \geq \log \delta^{-\bar{\tau}/2}$ for all $I \in \mathcal{G}_{j + 1}$. Indeed, if this failed, then
\begin{align*} \sum_{I \in \mathcal{D}_{\delta_{j + 1}}} \nu(I) H(\nu_{I},\mathcal{D}_{\delta_{j}}) & < \sum_{H(\ldots) \geq \log \delta^{-\bar{\tau}/2}} \nu(I) \cdot \log \delta^{-1} + \sum_{H(\ldots) < \log \delta^{-\bar{\tau}/2}} \nu(I) \cdot \log \delta^{-\bar{\tau}/2}\\
& \leq \frac{\bar{\tau}}{2} \cdot \log \delta^{-1} + \log \delta^{-\bar{\tau}/2} = \log \delta^{-\bar{\tau}}, \end{align*} 
which contradicts \eqref{form23}. We write $G_{j + 1} := \cup \mathcal{G}_{j + 1}$, and we renormalise $\nu$ to $G_{j + 1}$:
\begin{displaymath} \bar{\nu}_{1} := \tfrac{1}{\nu(G_{j + 1})} \cdot \nu|_{G_{j + 1}}. \end{displaymath}
Observe, by \eqref{form239}, that $\bar{\nu}_{1} $ is a $(\tau,O\delta^{-\eta})$-Frostman probability measure with the additional feature that if $I \in \mathcal{D}_{\delta_{j + 1}}$ is an arc with $\bar{\nu}_{1}(I) > 0$, then
\begin{equation}\label{form24} H(\bar{\nu}_{1,I},\mathcal{D}_{\delta_{j}}) = H(\nu_{I},\mathcal{D}_{\delta_{j}}) \geq \log \delta^{-\bar{\tau}/2}. \end{equation} 
It is worth noting that $\nu_{I} = \bar{\nu}_{1,I}$ for all $I \in \mathcal{G}_{j + 1}$ ($= \{I \in \mathcal{D}_{\delta_{j + 1}} : \nu_{1}(I) > 0\}$). The inequality \eqref{form24} means that the measure $\bar{\nu}_{1} $ cannot be completely concentrated inside any single interval of length $\delta_{j} = |I|^{1 + \epsilon}$. What we would prefer is, \emph{a fortiori}, that $\nu_{I} = \bar{\nu}_{1,I}$ satisfies a Frostman-type estimate at the smaller scale $\delta_{j}$, as stated in \eqref{form149}. This will be achieved by renormalising $\bar{\nu}_{1}$ further to the measure $\bar{\nu}$, which finally satisfies Lemma \ref{lemma8}.

Fix $I \in \mathcal{D}_{\delta_{j + 1}}$ with $\nu_{1}(I) > 0$, write $\mathcal{D}_{\delta_{j}}(I) := \{J \in \mathcal{D}_{\delta_{j}} : J \subset I\}$ for $I \in \mathcal{D}_{j + 1}$, and then estimate
\begin{displaymath} \log \delta^{-\bar{\tau}/2} \stackrel{\eqref{form24}}{\leq} \sum_{J \in \mathcal{D}_{\delta_{j}}(I)} \nu_{I}(J)\log \tfrac{1}{\nu_{I}(J)} \leq \sum_{\nu_{I}(J) \geq \delta^{\bar{\tau}/4}} \nu_{I}(J) \log \delta^{-\bar{\tau}/4} + \sum_{\nu_{I}(J) < \delta^{\bar{\tau}/4}} \nu_{I}\log \tfrac{1}{\nu_{I}(J)}. \end{displaymath}
Since $\nu_{I}$ is a probability measure, the first term is bounded from above by $\tfrac{1}{2} \log \delta^{-\bar{\tau}/2}$, and consequently the second term has the lower bound
\begin{equation}\label{form22} \sum_{\nu_{I}(J) < \delta^{\bar{\tau}/4}} \nu_{I}(J)\log \tfrac{1}{\nu_{I}(J)} \geq \log \delta^{-\bar{\tau}/4}. \end{equation}
On the other hand, if the total $\nu_{I}$-measure of the intervals in the sum above, namely $\mathcal{G}_{I} := \{J \in \mathcal{D}_{\delta_{j}}(I) : 0 < \nu_{I}(J) < \delta^{\bar{\tau}/4}\} \subset \mathcal{D}_{\delta_{j}}$, is denoted by $m_{I} := \nu_{I}(\cup \mathcal{G}_{I})$ we have
\begin{align*} \log \delta^{-\bar{\tau}/4} & \stackrel{\eqref{form22}}{\leq} \sum_{\nu_{I}(J) < \delta^{\bar{\tau}/4}} \nu_{I}(J)\log \tfrac{1}{\nu_{I}(J)} = m_{I}\sum_{J \in \mathcal{G}_{I}} \frac{\nu_{I}(J)}{m_{I}} \log \tfrac{1}{\nu_{I}(J)}\\
&  \leq m_{I} \log \left(\sum_{J \in \mathcal{G}_{I}} \tfrac{1}{m_{I}} \right) = m_{I} \log |\mathcal{G}_{I}| - m_{I} \log m_{I}, \end{align*}
by Jensen's inequality applied to the discrete probability measure $J \mapsto \nu_{I}(J)/m_{I}$ on $\mathcal{G}_{I}$. Since $m_{I} \in (0,1]$, the second term satisfies $|m_{I} \log m_{I}| \leq \tfrac{1}{2}$, and on the other hand $|\mathcal{G}_{I}| \leq |\mathcal{D}_{\delta_{j}}(I)| \leq \delta^{-1}$, by a crude estimate. Assuming that $\delta > 0$ is so small that $\log \delta^{-\bar{\tau}/4} - \tfrac{1}{2} \geq \log \delta^{-\bar{\tau}/8}$, we find from the estimate above that
\begin{displaymath} \log \delta^{-m_{I}} \geq \log |\mathcal{G}_{I}|^{m_{I}} \geq \log \delta^{-\bar{\tau}/4} + m_{I} \log m_{I} \geq \log \delta^{-\bar{\tau}/8}, \end{displaymath}
and consequently $\nu_{I}(\cup \mathcal{G}_{I}) = m_{I} \geq \bar{\tau}/8$. In other words, if we define
\begin{displaymath} \mathcal{G}_{j} := \bigcup_{I \in \mathcal{G}_{j + 1}} \mathcal{G}_{I} \subset \mathcal{D}_{\delta_{j}} \quad \text{and} \quad G := G_{j} := \cup \mathcal{G}_{j} \subset G_{j + 1}, \end{displaymath}
then 
\begin{displaymath} \bar{\nu}_{1}(G) = \sum_{I \in \mathcal{G}_{j + 1}} \bar{\nu}_{1}(I) \cdot \nu_{I}(\cup \mathcal{G}_{I}) \geq \bar{\tau}/8. \end{displaymath}
Therefore, the measure
\begin{displaymath} \bar{\nu} := \tfrac{1}{\bar{\nu}_{1}(G)} \cdot \bar{\nu}_{1}|_{G} = \tfrac{1}{\nu(G)} \cdot \nu|_{G} \end{displaymath}
remains a $(\tau,O\delta^{-\eta})$-Frostman probability measure with the feature that if $\bar{\nu}(I) > 0$ for some $I \in \mathcal{D}_{\delta_{j + 1}}$, then
\begin{equation}\label{form25} \bar{\nu}_{I}(J) \leq \delta^{\bar{\tau}/4}, \qquad J \in \mathcal{D}_{\delta_{j}}(I). \end{equation}
Note that $\bar{\nu}$ was finally defined by restricting the original measure "$\nu$" to a certain union $G$ of dyadic intervals (of length $\delta_{j}$), whose total $\nu$ measure is bounded from below by $\geq \omega(\alpha,\beta,\sigma,\tau)$. Thus $\bar{\nu}$ remains a $(\tau,O\delta^{-\eta})$-Frostman probability measure with the property
\begin{equation}\label{form72} \mu(B(1) \cap H_{\theta}(K,\delta^{-\sigma + \zeta},[\delta,1])) \geq \delta^{\eta}, \qquad \theta \in \spt \bar{\nu} \subset E. \end{equation}
We have now proven Lemma \ref{lemma8}.

Since $\bar{\nu}$ satisfies roughly the same hypotheses as $\nu$ (and additionally the Frostman property \eqref{form25}), we redefine $\nu := \bar{\nu}$ to simplify notation. We also recall that $\bar{\delta} := \delta_{j + 1}^{2}$ (so if $j = 0$, simply $\bar{\delta} = \delta$). We also write $\mathcal{D}_{1} := \mathcal{D}_{\delta_{j}}$ for the dyadic partition of $[0,1)$ into intervals of length $\bar{\delta} = \bar{\delta}^{1}$, and we write $\mathcal{D}_{1/2} := \mathcal{D}_{\delta_{j + 1}}$ (the dyadic partition to intervals of length $\bar{\delta}^{1/2} = \delta_{j + 1}$). Then, \eqref{form25} implies that
\begin{equation}\label{form73} I \in \mathcal{D}_{1/2} \text{ and } \nu(I) > 0 \quad \Longrightarrow \quad \nu_{I}(J) \leq \delta^{\omega(\alpha,\beta,\sigma,\tau)} \leq \bar{\delta}^{\omega(\alpha,\beta,\sigma,\tau)} \text{ for } J \in \mathcal{D}_{(1 + \epsilon)/2}(I), \end{equation}
where $\mathcal{D}_{(1 + \epsilon)/2}(I) \subset \mathcal{D}_{\delta_{j}}$ are the dyadic intervals of length $\delta_{j} = \delta_{j + 1}^{1 + \epsilon} = \bar{\delta}^{(1 + \epsilon)/2}$ contained in $I$. To close the section, we observe that it is well possible that $\bar{\delta} = \delta_{j + 1}^{2} = \delta$; this happens if the index $j \in \{0,\ldots,n - 1\}$ fixed at \eqref{form23} happens to be $j = 0$, so $\bar{\delta} = \delta_{0 + 1}^{2} = (\delta^{1/2})^{2} = \delta$. This will lead to a simpler special case of the proof below. On the the other hand, if $\bar{\delta} > \delta$, then $\bar{\delta} = \delta_{j + 1}^{2}$ for some $j \geq 1$. Consequently $\bar{\delta} \geq \delta_{2}^{2} = \delta^{1/(1 + \epsilon)}$, and hence $\bar{\delta}$ is "much longer" than $\delta$: in fact
\begin{equation}\label{form203} \frac{\delta}{\bar{\delta}} \leq \delta^{1 - 1/(1 + \epsilon)}. \end{equation}
Note that the right hand side is a positive power of $\delta$.

\subsection{High multiplicity sets at scale $\bar{\delta}$} Now we have found a scale $\bar{\delta} \in [\delta,\delta^{\omega}]$ such that the measure $\nu$ has non-trivial "branching" between the scales $\bar{\delta}$ and $\bar{\delta}^{1/2}$, as quantified in \eqref{form73}. The following problem now materialises: our counter assumption \eqref{form255} concerned the scale $\delta$, and there is a risk that all the information is lost when replacing "$\delta$" by "$\bar{\delta}$". We resolve the issue by proving the following lemma: 
\begin{lemma}\label{lemma9} There exists a subset $S \subset [0,1]$ with $\nu(S) \geq \omega \cdot \delta^{\eta}$ with the property
\begin{equation}\label{form74} \mu_{B(5)}(B(1) \cap H_{\theta}(T_{B(5)}(K),\bar{\delta}^{-\sigma + O(\tau)\zeta},[\bar{\delta},1])) \geq \omega \cdot \delta^{\eta}, \quad \theta \in S. \end{equation}
\end{lemma}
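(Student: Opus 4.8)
The plan is to transfer the counter assumption \eqref{form255}--\eqref{form72}, which lives at the fine scale $\delta$, down to the branching scale $\bar{\delta} \geq \delta$, at the cost of slightly weakening the multiplicity exponent.

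\emph{Step 1: rescaling reduction.} By Lemma \ref{lemma7} and Notation \ref{n1}, for any $M > 0$ one has
\begin{displaymath} \mu_{B(5)}(B(1) \cap H_{\theta}(T_{B(5)}(K),M,[\bar{\delta},1])) = 5^{-\gamma}\mu(B(5) \cap H_{\theta}(K,M,[5\bar{\delta},5])), \end{displaymath}
and by \eqref{form55} (widen the outer radius, then shrink the inner one) $H_{\theta}(K,5M,[\bar{\delta},1]) \subset H_{\theta}(K,M,[5\bar{\delta},5])$. Since moreover decreasing the multiplicity exponent only enlarges the high multiplicity set by \eqref{form62}, and $B(1) \subset B(5)$, it suffices to produce $S$ with $\nu(S) \geq \omega \delta^{\eta}$ such that $\mu(B(1) \cap H_{\theta}(K,\bar{\delta}^{-\sigma + O(\tau)\zeta},[\bar{\delta},1])) \geq \omega \delta^{\eta}$ for $\theta \in S$ (with a slightly smaller value of $O(\tau)$ than in \eqref{form74}, which is immaterial). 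If $\bar{\delta} = \delta$ -- the case $j = 0$ of Lemma \ref{lemma8} -- this is exactly \eqref{form72}, with $S = \spt \nu$. So from now on assume $\bar{\delta} > \delta$; then \eqref{form203} gives $\delta/\bar{\delta} \leq \delta^{\omega_{0}}$ with $\omega_{0} := 1 - 1/(1 + \epsilon)$ (hence also $\delta/\bar{\delta} \leq \Delta_{0}$ once $\delta$ is small), and \eqref{form75} gives $\delta \geq \bar{\delta}^{4/\tau}$.

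\emph{Step 2: the intermediate scale.} Apply the hypothesis \eqref{form201} at the scale $\Delta := \delta/\bar{\delta}$. Cover $B(3) \cap K$ by $\lesssim C_{\gamma}\bar{\delta}^{-\gamma}$ discs $Q$ of radius $\bar{\delta}$ centred on $K$; by Remark \ref{rem1} each rescaled measure $\mu_{Q}$ is again a product of an $(\alpha,C_{\alpha})$- and a $(\beta,C_{\beta})$-regular measure, so \eqref{form201} applies to $\mu_{Q}$ at scale $\Delta$. Translating back through Lemma \ref{lemma7}, summing over $Q$ (the factor $\bar{\delta}^{-\gamma}$ for the number of discs is exactly cancelled by the factor $\bar{\delta}^{\gamma}$ from the rescaling of $\mu$), and using that $\nu$ is $(\tau,O\delta^{-\eta})$-Frostman so that $\nu(A) \lesssim \delta^{-\eta}\mathcal{H}^{\tau}_{\infty}(A)$, one obtains
\begin{displaymath} \int_{\spt \nu} \mu(B(3) \cap H_{\theta}(K,(\delta/\bar{\delta})^{-\sigma},[\delta,\bar{\delta}]))\,d\nu(\theta) \lesssim C_{\gamma}^{2}\,\delta^{-\eta}(\delta/\bar{\delta})^{\eta_{0}} \leq C_{\gamma}^{2}\,\delta^{\omega_{0}\eta_{0} - \eta} \leq \delta^{\omega_{0}\eta_{0}/2}, \end{displaymath}
the last inequality by \eqref{form256}. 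By Chebyshev there is $S' \subset \spt \nu$ with $\nu(S') \geq 1 - \delta^{\omega_{0}\eta_{0}/4}$ and $\mu(B(3) \cap H_{\theta}(K,(\delta/\bar{\delta})^{-\sigma},[\delta,\bar{\delta}])) < \delta^{\chi}$ for $\theta \in S'$, where $\chi := \omega_{0}\eta_{0}/4 \gg \eta$ (up to harmless fattenings of $K$, which I suppress).

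\emph{Step 3: propagating multiplicity from $\delta$ to $\bar{\delta}$.} Fix $\theta \in S'$ and $x \in P_{\theta} := B(1) \cap H_{\theta}(K,\delta^{-\sigma + \zeta},[\delta,1])$, and cover $B(x,1) \cap K_{\bar{\delta}} \cap \pi_{\theta}^{-1}\{\pi_{\theta}(x)\}$ by $m' := \m_{K,\theta}(x \mid [\bar{\delta},1])$ discs $D_{i} = B(y_{i},\bar{\delta})$ centred on the line $\pi_{\theta}^{-1}\{\pi_{\theta}(x)\}$. Call $D_{i}$ \emph{good} if $\m_{K,\theta}(y_{i} \mid [\delta,\bar{\delta}]) < (\delta/\bar{\delta})^{-\sigma}$ and \emph{bad} otherwise. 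Since $K_{\delta} \subset K_{\bar{\delta}}$,
\begin{displaymath} \delta^{-\sigma + \zeta} \leq \m_{K,\theta}(x \mid [\delta,1]) \leq \sum_{i} \m_{K,\theta}(y_{i} \mid [\delta,\bar{\delta}]) \leq m'(\delta/\bar{\delta})^{-\sigma} + \#\{\text{bad } D_{i}\} \cdot O(\bar{\delta}/\delta). \end{displaymath}
If the first term on the right is at least half the left side, then $m' \geq \tfrac{1}{2}\delta^{\zeta}\bar{\delta}^{-\sigma} \geq \tfrac{1}{2}\bar{\delta}^{4\zeta/\tau - \sigma} \geq \bar{\delta}^{-\sigma + O(\tau)\zeta}$, using $\delta \geq \bar{\delta}^{4/\tau}$ and taking $O(\tau)$ a suitable multiple of $1/\tau$ (independent of $\zeta$) with $\bar{\delta}$ small; thus $x \in H_{\theta}(K,\bar{\delta}^{-\sigma + O(\tau)\zeta},[\bar{\delta},1])$. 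Otherwise the second term dominates and the line through $x$ meets $\gtrsim \delta^{1 - \sigma + \zeta}/\bar{\delta}$ bad $\bar{\delta}$-discs, each of which contains a point of $H_{\theta}(K,(\delta/\bar{\delta})^{-\sigma},[\delta,\bar{\delta}])$; write $\mathrm{BL}_{\theta}$ for the set of such $x$. We have shown $P_{\theta} \subset H_{\theta}(K,\bar{\delta}^{-\sigma + O(\tau)\zeta},[\bar{\delta},1]) \cup \mathrm{BL}_{\theta}$, hence $\mu(B(1) \cap H_{\theta}(K,\bar{\delta}^{-\sigma + O(\tau)\zeta},[\bar{\delta},1])) \geq \delta^{\eta} - \mu(\mathrm{BL}_{\theta})$ for $\theta \in S'$ (recall $\mu(P_{\theta}) \geq \delta^{\eta}$). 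Thus it remains to take $S := \{\theta \in S' : \mu(\mathrm{BL}_{\theta}) \leq \tfrac{1}{2}\delta^{\eta}\}$ and to show $\nu(S) \geq \omega\delta^{\eta}$, after which undoing the rescaling of Step 1 finishes the proof.

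\emph{The main obstacle.} The heart of the matter is controlling $\mu(\mathrm{BL}_{\theta})$, on average over $\theta \in \spt\nu$. The set $\mathrm{BL}_{\theta}$ is covered by those width-$\bar{\delta}$ tubes in direction $\theta$ that contain $\gtrsim \delta^{1 - \sigma + \zeta}/\bar{\delta}$ of the $\bar{\delta}$-discs meeting $H_{\theta}(K,(\delta/\bar{\delta})^{-\sigma},[\delta,\bar{\delta}])$. Since, by Step 2, the $\mu$-measure of the latter set is (in $\nu$-average) $\lesssim C_{\gamma}^{2}\delta^{-\eta}(\delta/\bar{\delta})^{\eta_{0}}$, it is covered by correspondingly few $\bar{\delta}$-discs, and a dual incidence count -- using the regularity of $\mu_{\alpha}$ in the form $\mu(\pi_{\theta}^{-1}(I) \cap B(1)) \leq C_{\gamma}\bar{\delta}^{\alpha}$ for intervals $I$ of length $\lesssim \bar{\delta}$, together with the bound $\bar{\delta} \in [\delta^{1/(1+\epsilon)}, \delta^{\tau/4}]$ from \eqref{form203} and \eqref{form75} -- yields a bound on $\int_{\spt\nu}\mu(\mathrm{BL}_{\theta})\,d\nu(\theta)$. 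One then checks, using the specific choices of $\epsilon,\rho,\zeta_{0},m_{0}$ made around \eqref{form237}, the constraint \eqref{form238} on $\zeta$, and the extreme smallness of $\eta$ from \eqref{form256}, that this average is $< \tfrac{1}{4}\delta^{\eta}$; a final application of Chebyshev then gives $\nu(\{\theta \in S' : \mu(\mathrm{BL}_{\theta}) > \tfrac{1}{2}\delta^{\eta}\}) \lesssim \delta^{\eta}$, so $\nu(S) \geq \nu(S') - O(\delta^{\eta}) \geq \omega\delta^{\eta}$, as required. Extracting the right quantitative relation between $\sigma$, the branching exponent, and the tube estimate in this last count -- i.e. making the "bad line" set genuinely negligible in the regime where $\bar{\delta}$ is much larger than $\delta$ and $\sigma$ is close to $\beta/(2-\alpha)$ -- is the delicate point; everything else is bookkeeping with the scales $\delta,\bar{\delta},\delta/\bar{\delta}$ and the inclusions of Lemma \ref{lemma6}.
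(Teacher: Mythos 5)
Your Steps 1 and 2 are on track and match the structure of the paper's argument. Step 1 correctly reduces to the unscaled setting via Lemma \ref{lemma7} (and dispatches the trivial case $\bar{\delta}=\delta$), and Step 2, where you apply the hypothesis \eqref{form201} at the intermediate scale $\Delta=\delta/\bar{\delta}$ after tiling by $\bar{\delta}$-balls, rescaling each $\mu_B$, and using the $(\tau,O\delta^{-\eta})$-Frostman property of $\nu$, is essentially what the paper does to control the second term arising from its decomposition.

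The gap is in Step 3 and the paragraph labelled ``the main obstacle.'' Your good/bad disc decomposition of $\m_{K,\theta}(x\mid[\delta,1])$ is the right idea in spirit: a point of the fine-scale high-multiplicity set is either already in the coarse-scale one, or its line threads many intermediate-scale rich $\bar{\delta}$-discs, and you call the set of such points $\mathrm{BL}_{\theta}$ and want to show $\int\mu(\mathrm{BL}_{\theta})\,d\nu$ is small. But this is precisely the content of the paper's Proposition \ref{prop3}: the pointwise-in-$\theta$ three-scale inequality
\[
\mu_1(H_{\theta}(K,CN,[r,1]))\ \leq\ \mu_1(H_{\theta}(K,cM,[4R,5]))\ +\ CC_{\gamma}^{2}\,\mu_4\bigl(H_{\theta}(K,c\tfrac{N}{M},[4r,7R])\bigr),
\]
applied with $r=\delta$, $R=\bar{\delta}/4$, $CN=\delta^{-\sigma+\zeta}$, $cN/M=(\delta/\bar{\delta})^{-\sigma}$, and then integrated against $\nu$. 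Your $\mu(\mathrm{BL}_{\theta})$ plays the role of the last term, and establishing that bound is not ``bookkeeping with the scales''; it is the entire content of Section \ref{s:prop2}, which occupies a full section of the paper. You explicitly acknowledge that ``extracting the right quantitative relation\ldots is the delicate point,'' and that delicate point is where the proof actually lives.

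Two concrete difficulties you elide are exactly what the paper's proof of Proposition \ref{prop3} is organised around. First, passing from your measure bound on $\mu(B(3)\cap H_{\theta}(K,(\delta/\bar{\delta})^{-\sigma},[\delta,\bar{\delta}]))$ to a count of bad $\bar{\delta}$-discs (and back to a $\mu$-measure bound on tubes meeting many of them) needs a careful balance between covering numbers and measure lower bounds of the form $\mu(B(x',r))\gtrsim r^{\gamma}/C_{\gamma}$ for $x'\in K$; the paper isolates this in the key estimate \eqref{form43}. Second, a ``bad'' disc only guarantees a \emph{single} high-multiplicity point for the range $[\delta,\bar{\delta}]$ inside it, not a sizeable portion of $H_{\theta}(K,(\delta/\bar{\delta})^{-\sigma},[\delta,\bar{\delta}])$; one cannot naively distribute the measure of the intermediate high-multiplicity set over the bad discs. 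The paper handles this by working with the inflated quantity $\widetilde{F}_{r}(B_R)(s)=N_{4r}(4B_R\cap K_{4r}\cap\pi_{\theta}^{-1}\{s\})$, the observation \eqref{form249} that high multiplicity propagates across an interval of parameters $s$ of length $\sim r$, and the implication \eqref{form44} that then places a whole tube portion in a looser high-multiplicity set $H_{\theta}(c\tfrac{N}{M},[4r,7R])$. Your ``dual incidence count'' would need all of this to close, so as written the argument has a genuine hole where Proposition \ref{prop3} should be.
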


Thus, modulo replacing "$\zeta$" by "$O \cdot \zeta$", which is harmless for our purposes, the counter assumption \eqref{form255} at scale "$\delta$" can be used (in cooperation with our hypothesis \eqref{form201}) to infer similarly bad behaviour at scale "$\bar{\delta}$" for the dilated regular set $T_{B(5)}(K)$, and the renormalised measure $\mu_{B(5)}$ supported on $T_{B(5)}(K)$. As we discussed in Remark \ref{rem1}, the measure $\mu_{B(5)}$ is of the same form as $\mu$, with precisely the same constants. In particular our inductive hypothesis \eqref{form201} may later be applied to $\mu_{B(5)}$ and $T_{B(5)}(K)$.

Note that if $\delta = \bar{\delta}$, then every $\theta \in \spt \nu$ already satisfies \eqref{form74} (with $O(\alpha,\beta,\sigma,\tau) = 1$ and $\mu,K$ in place of $\mu_{B(5)},T_{B(5)}(K)$) by virtue of our initial counter assumption \eqref{form255}. In this case the argument in the present section will not be needed. So, for the time being, we will assume that $\bar{\delta} > \delta$, which implies by \eqref{form203} that $\bar{\delta}$ is substantially larger than $\delta$.

In this case, we will apply Proposition \ref{prop3}, whose statement we include here, but whose lengthy proof is postponed to Section \ref{s:prop2}:
\begin{proposition}\label{prop3} Let $\theta \in [0,1]$, and let $1 \leq M \leq N < \infty$ be constants, let $0 < r \leq R \leq 1$, and let $\mu$ be a $(\gamma,C_{\gamma})$-regular measure with $K := \spt \mu \subset \R^{2}$. Abbreviate $\mu_{s} := \mu|_{B(s)}$ for $s > 0$. Then, there exist absolute constants $c,C > 0$ such that
\begin{equation}\label{form202} \mu_{1}(H_{\theta}(K,CN,[r,1])) \leq \mu_{1}(H_{\theta}(K,cM,[4R,5])) + CC_{\gamma}^{2} \cdot \mu_{4}(H_{\theta}(K,c\tfrac{N}{M},[4r,7R])).  \end{equation} 
\end{proposition}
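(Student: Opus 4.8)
\subsection*{Proof plan for Proposition \ref{prop3}}

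The plan is to prove \eqref{form202} by a two-scale pigeonholing argument, splitting the high-multiplicity set on the left according to whether the \emph{coarse} multiplicity (on the window $[4R,5]$) is already large. Write $A := B(1) \cap H_{\theta}(K,CN,[r,1])$ and decompose $A = A_{1} \cup A_{2}$, where $A_{1} := A \cap H_{\theta}(K,cM,[4R,5])$ and $A_{2} := A \setminus A_{1}$. Since $\mu_{1}(H_{\theta}(K,CN,[r,1])) = \mu(A)$ and $A_{1} \subset B(1) \cap H_{\theta}(K,cM,[4R,5])$, the set $A_{1}$ accounts for the first term on the right of \eqref{form202}, and it remains to prove $\mu(A_{2}) \lesssim C_{\gamma}^{2} \cdot \mu_{4}(H_{\theta}(K,cN/M,[4r,7R]))$.

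Fix $x \in A_{2}$ and write $\ell_{x} := \pi_{\theta}^{-1}\{\pi_{\theta}(x)\}$. Since $\m_{K,\theta}(x \mid [4R,5]) < cM$, the inclusion $K_{r} \cap \ell_{x} \cap B(x,1) \subset K_{4R} \cap \ell_{x} \cap B(x,5)$ (using $r \leq 4R$ and $B(x,1) \subset B(x,5)$) shows that $K_{r} \cap \ell_{x} \cap B(x,1)$ can be covered by $\lesssim cM$ balls of radius $\sim R$ with centres on $\ell_{x}$; here one invokes the elementary comparison of covering numbers at comparable scales, as in Lemma \ref{lemma6}. Because $\m_{K,\theta}(x \mid [r,1]) \geq CN$, the pigeonhole principle then yields one such ball $B(w_{x},C'R)$, $w_{x} \in \ell_{x}$, with $N_{r}(K_{r} \cap \ell_{x} \cap B(w_{x},C'R)) \gtrsim CN/(cM) \gtrsim N/M$, once $C$ is large and $c$ small. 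Now consider the thin tube-segment $\widetilde{G}_{x} := K \cap (\ell_{x})_{2r} \cap B(w_{x},C'R)$. A point $z \in \widetilde{G}_{x}$ lies within $2r$ of $\ell_{x}$, so its own fibre $\ell_{z}$ lies within $\sim r$ of $\ell_{x}$ and hence within $4r$ of every point of $K_{r} \cap \ell_{x} \cap B(w_{x},C'R)$; projecting that set onto $\ell_{z}$, and using $B(w_{x},C'R) \subset B(z,7R)$ (which holds for a suitable absolute choice of $C'$), one obtains $\m_{K,\theta}(z \mid [4r,7R]) \gtrsim N/M \geq cN/M$. Since $\widetilde{G}_{x} \subset B(4)$ (as $x \in B(1)$ and $R \leq 1$), this proves
\begin{displaymath} \widetilde{G}_{x} \subset H_{\theta}(K,cN/M,[4r,7R]) \cap B(4) =: H', \qquad \text{with} \quad N_{r}(\widetilde{G}_{x}) \gtrsim N/M. \end{displaymath}

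To conclude, cover $B(1)$ by a bounded-overlap family $\mathcal{T}$ of tubes $T = \pi_{\theta}^{-1}(I)$ of width $\sim R$, so that $\mu(A_{2}) \lesssim \sum_{T \in \mathcal{T}} \mu(A_{2} \cap T)$. For each $T$ meeting $A_{2}$ pick $x_{T} \in A_{2} \cap T$; since $\ell_{x_{T}} \subset T$ and $T$ has width $\sim R$, the bound $\m_{K,\theta}(x_{T} \mid [4R,5]) < cM$ forces $K \cap T \cap B(1)$ to be covered by $\lesssim cM$ balls of radius $\sim R$, whence $\mu(A_{2} \cap T) \leq \mu(K \cap T \cap B(1)) \lesssim cMC_{\gamma}R^{\gamma}$ by the upper Frostman bound. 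On the other hand $\widetilde{G}_{x_{T}} \subset 3T$, and an $r$-separated subset of $\widetilde{G}_{x_{T}}$ of cardinality $\gtrsim N_{r}(\widetilde{G}_{x_{T}}) \gtrsim N/M$ produces disjoint $r$-balls centred on $K$, so that $\mu(H' \cap 3T) \gtrsim \mu(\widetilde{G}_{x_{T}}) \gtrsim C_{\gamma}^{-1}(N/M)r^{\gamma}$ (this is where the lower regularity of $\mu$ on $K$-centred balls enters). Summing the two estimates over the tubes $T$ meeting $A_{2}$, using the bounded overlap of the dilates $3T$, should collapse the $M$-, $N$-, $r$- and $R$-dependencies into the single factor $C_{\gamma}^{2}$, and it is precisely here that the numerical windows $[4R,5]$ and $[4r,7R]$ in the statement are calibrated. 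I expect this last step — reconciling the per-tube upper and lower bounds so that only the clean constant $CC_{\gamma}^{2}$ survives (equivalently, choosing the right pair of measures to compare inside each tube) — to be the main obstacle; everything else is a bookkeeping of nested $r$-, $R$-, $1$- and $5$-scale balls.
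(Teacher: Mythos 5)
Your first two-thirds track the paper's argument closely: the split $A = A_1 \cup A_2$, the observation that $x \in A_2$ makes the fibre $\ell_x$ simultaneously $r$-dense ($\m_{K,\theta}(x \mid [r,1]) \geq CN$) and coarsely sparse ($\m_{K,\theta}(x \mid [4R,5]) < cM$), the pigeonhole producing a coarse ball $B(w_x,C'R)$ on $\ell_x$ whose $r$-covering number is $\gtrsim N/M$, and the inclusion of the thickened piece $\widetilde G_x$ into $H' := B(4)\cap H_\theta(K,cN/M,[4r,7R])$ correspond (in different notation) to the paper's implications \eqref{form41}--\eqref{form42}, the key inequality \eqref{form10}, and the inclusion \eqref{form44}. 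The step you flag as uncertain is, however, where the argument genuinely breaks. Covering $B(1)$ by tubes $T$ of width $\sim R$, your per-tube estimates give $\mu(A_2\cap T)\lesssim cM\,C_\gamma R^\gamma$ (at most $cM$ coarse $R$-discs, each of Frostman measure $\lesssim C_\gamma R^\gamma$) against $\mu(H'\cap 3T)\gtrsim C_\gamma^{-1}(N/M)\,r^\gamma$ (from the \emph{single} dense ball $B(w_{x_T},C'R)$). The ratio is $c\,C_\gamma^{2}\,(M^{2}/N)(R/r)^{\gamma}$, and nothing in the hypotheses forces $M^{2}R^{\gamma}/(N r^{\gamma})\lesssim 1$; taking $M=N=1$ and $r\ll R$ shows this factor can be arbitrarily large, so \eqref{form202} does not follow from these per-tube bounds.

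Two modifications are needed, and they are precisely what the paper's proof does. First, replace the $R$-width tubes by \emph{fine} $r$-intervals $I\in\mathcal{D}_r(\R)$ on the $\pi_\theta$-axis: the per-interval Frostman bound $\mu_\theta(I)\lesssim C_\gamma r^\gamma\sup_{t\in 3I}F_r(t)$ (the paper's \eqref{form11}) is much sharper than $cMC_\gamma R^\gamma$ and, crucially, carries a factor $r^\gamma$ that must later cancel. Second, do not discard all but one dense $R$-ball: instead show (the paper's \eqref{form8}--\eqref{form10}) that when $F_r(t)\geq CN$ and the coarse multiplicity is $\leq M$, the $R$-discs $B_R$ with $F_r(B_R)(t)\geq N/M$ carry \emph{together} a constant fraction of $F_r(t)$, and each such disc contributes $\gtrsim r^\gamma F_r(B_R)(t)/C_\gamma$ to $\mu(3B_R\cap H')$ (the paper's \eqref{form43}--\eqref{form16}). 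The $r^\gamma$'s then cancel exactly against the Frostman factor, and summing over $I\in\mathcal D_r(\R)$ and $B_R\in\mathcal K_R$ using the bounded overlap of the $5I$'s and of the discs $3B_R\subset B(4)$ yields $\mu(A_2)\lesssim C_\gamma^{2}\mu(H')$ with no stray powers of $r$, $R$, $M$ or $N$. So the missing idea is not mere bookkeeping: you must keep \emph{all} the dense coarse balls along each fibre and localise at scale $r$ rather than $R$ before the constants collapse.
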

We will apply the proposition to the $(\gamma,C_{\gamma})$-regular measure $\mu = \mu_{\alpha} \times \mu_{\beta}$ with the parameters $M \leq N$ such that 
\begin{displaymath} CN = \delta^{-\sigma + \zeta} \quad \text{and} \quad c \cdot \tfrac{N}{M} = \left(\delta/\bar{\delta} \right)^{-\sigma}, \end{displaymath}
from which we may solve that
\begin{equation}\label{form83} c M = c^{2}\left(\tfrac{\delta}{\bar{\delta}} \right)^{\sigma} \cdot N = \tfrac{c^{2}}{C} \left(\tfrac{\delta}{\bar{\delta}} \right)^{\sigma} \cdot \delta^{-\sigma + \zeta} = \tfrac{c^{2}}{C} \cdot \bar{\delta}^{-\sigma}\cdot \delta^{\zeta} \stackrel{\eqref{form75}}{\geq} \bar{\delta}^{-\sigma + O(\tau)\zeta}. \end{equation}
In the final inequality, we first took $\delta$ so small that $(c^{2}/C) \geq \delta^{\zeta}$, and then we used that $\bar{\delta} \leq \delta^{\omega(\tau)}$ or, (see \eqref{form75}) more precisely $\delta \geq \bar{\delta}^{4/\tau}$. It will be convenient to abbreviate
\begin{equation}\label{form84} \bar{\zeta} := O(\tau) \cdot \zeta > 0, \end{equation}
where the implicit constant $O(\tau)$ is determined by \eqref{form83}. This constant is the one which appears in \eqref{form74}, and it is also the one which was already mentioned in \eqref{form238}. More precisely, at the end of the day, we will require from the parameter "$\zeta$" appearing in Proposition \ref{mainProp} that 
\begin{equation}\label{form141} 0 < 6 \cdot \bar{\zeta} = 6 \cdot O(\tau) \cdot \zeta < \min\{\zeta_{0},\kappa(\epsilon,m_{0})\}. \end{equation} 

For the scales $r \leq R$ appearing in the statement of Proposition \ref{prop3}, we will take $r := \delta$ and $R := \bar{\delta}/4 \geq \delta$ (by \eqref{form203}, and assuming that $0 < \delta \leq \delta_{0}(\epsilon) = \delta_{0}(\alpha,\beta,\sigma)$). With these choices, \eqref{form202} yields the following inequality for every $\theta \in [0,1]$:
\begin{align*} \mu(B(1) \cap H_{\theta}(K,\delta^{-\sigma + \zeta},[\delta,1])) & \leq \mu(B(1) \cap H_{\theta}(K,\bar{\delta}^{-\sigma + \bar{\zeta}},[\bar{\delta},5]))\\
& \quad + CC_{\gamma}^{2} \cdot \mu\left(B(4) \cap H_{\theta}\left(K,\left(\tfrac{\delta}{\bar{\delta}} \right)^{-\sigma},[4\delta,4\bar{\delta}] \right) \right). \end{align*}
Now, recalling from \eqref{form255} (or to be precise \eqref{form72}) that the left hand side is bounded from below by $\geq \delta^{\eta}$ for all $\theta \in \spt \nu$, we obtain
\begin{align} \delta^{\eta} & \leq \int_{0}^{1} \mu(B(1) \cap H_{\theta}(K,\bar{\delta}^{-\sigma + \bar{\zeta}},[\bar{\delta},5])) \, d\nu(\theta) \notag\\
&\label{form80} \quad + CC_{\gamma}^{2} \cdot \int_{0}^{1} \mu\left(B(4) \cap H_{\theta}\left(K,\left(\tfrac{\delta}{\bar{\delta}} \right)^{-\sigma},[4\delta,4\bar{\delta}] \right) \right) \, d\nu(\theta). \end{align}
Our plan is, next, to use the "inductive" hypothesis \eqref{form201} to show that the second term is $\leq \delta^{\eta}/2$ if $\delta > 0$ and $\eta > 0$ are chosen sufficiently small. This will eventually show that
\begin{equation}\label{form82} \int_{0}^{1} \mu(B(1) \cap H_{\theta}(K,\bar{\delta}^{-\sigma + \bar{\zeta}},[\bar{\delta},5])) \, d\nu(\theta) \geq \tfrac{1}{2} \cdot \delta^{\eta}, \end{equation}
which is already nearly the same as \eqref{form74}. To deal with the term on line \eqref{form80}, we let $\mathcal{K}$ be a minimal cover of $K \cap B(4)$ by discs of radius $4\bar{\delta}$. By the $(\gamma,C_{\gamma})$-regularity of $\mu$, we have $|\mathcal{K}| \leq C_{\gamma} \cdot \bar{\delta}^{-\gamma}$. Then, we decompose
\begin{equation}\label{form81} \eqref{form80} \leq CC_{\gamma}^{2} \cdot \sum_{B \in \mathcal{K}} \int_{0}^{1} \mu\left(B \cap H_{\theta} \left(K,\left(\tfrac{\delta}{\bar{\delta}} \right)^{-\sigma},[4\delta,4\bar{\delta}] \right) \right) \, d\nu(\theta). \end{equation}
To treat the individual terms on the right hand side, we consider the rescaled and renormalised measures $\mu_{B} = (4\bar{\delta})^{-\gamma} \cdot T_{B}\mu$ familiar from Notation \ref{n1}, and we write
\begin{equation}\label{form240} \mu\left(B \cap H_{\theta} \left(K,\left(\tfrac{\delta}{\bar{\delta}} \right)^{-\sigma},[4\delta,4\bar{\delta}] \right) \right) = (4\bar{\delta})^{\gamma} \mu_{B}\left(B(1) \cap H_{\theta}\left(T_{B}(K),\Delta^{-\sigma},[\Delta,1] \right) \right), \end{equation}
for any $\theta \in [0,1]$, where $\Delta := \delta/\bar{\delta}$. This equation is easily deduced from Lemma \ref{lemma7} with $r_{0} = 4\bar{\delta}$. As we explained in Remark \ref{rem1}, see \eqref{form64}, the push-forward $\mu_{B}$ can be expressed as a product of an $(\alpha,C_{\alpha})$-regular measure $\mu_{\alpha,B}$, and a $(\beta,C_{\beta})$-regular measure $\mu_{\beta,B}$. Therefore the hypothesis \eqref{form201} is applicable to the measure $\mu_{B}$, assuming that $\Delta = \delta/\bar{\delta} \leq \Delta_{0}$, as we may (by \eqref{form203}, and taking $\delta > 0$ sufficiently small, depending this time on $\alpha,\beta,\sigma,\Delta_{0}$). The conclusion is that the $\tau$-dimensional Hausdorff contents of the sets
\begin{displaymath} E_{B} := \left\{\theta \in [0,1] : \mu_{B}\left(B(1) \cap H_{\theta} \left(T_{B}(K),\Delta^{-\sigma},[\Delta,1] \right) \right) \geq \Delta^{\eta_{0}} \right\}, \quad B \in \mathcal{K},  \end{displaymath} 
satisfy the uniform upper bounds $\mathcal{H}^{\tau}_{\infty}(E_{B}) \leq \Delta^{\eta_{0}} = (\delta/\bar{\delta})^{\eta_{0}}$. Since $\nu$ is a $(\tau,O\delta^{-\eta})$-Frostman measure, hence absolutely continuous with respect to $\mathcal{H}^{\tau}_{\infty}$ with density bounded by $O\delta^{-\eta}$, it follows that
\begin{displaymath} \nu(E_{B}) \leq O\delta^{-\eta} \cdot \mathcal{H}^{\tau}_{\infty}(E_{B}) \leq O\delta^{-\eta} \cdot \left(\tfrac{\delta}{\bar{\delta}} \right)^{\eta_{0}} \stackrel{\eqref{form203}}{\leq} O\delta^{-\eta} \cdot \delta^{(1 - 1/(1 + \epsilon))\eta_{0}}. \end{displaymath}
Since $1 - 1/(1 + \epsilon) \geq \epsilon/2 \geq \omega(\alpha,\beta,\sigma) > 0$, we may ensure that the right hand side is bounded from above by $\delta^{\eta}/(64CC_{\gamma}^{4})$ by choosing $\eta \leq \epsilon \eta_{0}/8$, and then $\delta > 0$ sufficiently small.  Therefore, splitting the integration to $E_{B}$ and $[0,1] \, \setminus \, E_{B}$, and for $\theta \in E_{B}$ using the uniform upper bound $\mu_{B}(B(1)) \leq C_{\gamma}$, we find that
\begin{displaymath} \int_{0}^{1} \mu_{B}\left(B(1) \cap H_{\theta}\left(T_{B}(K),\left(\tfrac{\delta}{\bar{\delta}} \right)^{-\sigma},[\tfrac{\delta}{\bar{\delta}},1] \right) \right) \, d\nu(\theta) \leq \frac{\delta^{\eta}}{32C_{\gamma}^{3}}, \qquad B \in \mathcal{K}. \end{displaymath}
Plugging this back into \eqref{form81}, and recalling that $|\mathcal{K}_{\gamma}| \leq C_{\gamma} \cdot \bar{\delta}^{-\gamma}$, and \eqref{form240}, we find
\begin{displaymath} \eqref{form81} \leq CC_{\gamma}^{2} \cdot \frac{\delta^{\eta}}{32C_{\gamma}^{3}} \cdot |\mathcal{K}| \cdot (4\bar{\delta})^{\gamma} \leq \frac{\delta^{\eta}}{2}. \end{displaymath} 
As we discussed earlier, this concludes the proof of \eqref{form82}. 

Based on \eqref{form82}, and recalling that $\nu$ is a probability measure, there exists a subset $S \subset S^{1}$ of measure $\nu(S) \geq \omega \cdot \delta^{\eta}$ with the property
\begin{equation}\label{form85} \mu(B(1) \cap H_{\theta}(K,\bar{\delta}^{-\sigma + \bar{\zeta}},[\bar{\delta},5])) \geq \omega \cdot \delta^{\eta}, \qquad \theta \in S. \end{equation}
For tidiness' sake, let us replace "$5$" by "$1$" with the following trick: notice that
\begin{equation}\label{form208}  \mu(B(5) \cap H_{\theta}(K,\bar{\delta}^{-\sigma + 2\bar{\zeta}},[5\bar{\delta},5])) \geq \mu(B(1) \cap H_{\theta}(K,\bar{\delta}^{-\sigma + \bar{\zeta}},[5\bar{\delta},5])), \quad \theta \in [0,1], \end{equation}
using first the inclusion $H_{\theta}(K,M,[r,R]) \subset H_{\theta}(K,\tfrac{M}{5},[5r,R])$ from Lemma \ref{lemma6}, and then assuming that $\bar{\delta} > 0$ is sufficiently small that $\tfrac{1}{5} \cdot \bar{\delta}^{\bar{\zeta}} \geq \bar{\delta}^{2\bar{\zeta}}$. But now, using Lemma \ref{lemma7}, the left hand side of \eqref{form208} can be rewritten as
\begin{equation}\label{form209} 5 \cdot \mu_{B(5)} \mu(B(1) \cap H_{\theta}(T_{B(5)}(K),\bar{\delta}^{-\sigma + 2\bar{\zeta}},[\bar{\delta},1])). \end{equation}
Here $\bar{\mu} := \mu_{B(5)}$ is a product of an $(\alpha,C_{\alpha})$-regular measure and a $(\beta,C_{\beta})$-measure supported on $\bar{K} := T_{B(5)}(K)$, and according to \eqref{form85}, we have
\begin{displaymath} \bar{\mu}(B(1) \cap H_{\theta}(\bar{K},\bar{\delta}^{-\sigma + 2\bar{\zeta}},[\bar{\delta},1])) \geq \tfrac{\omega}{5} \cdot \delta^{\eta}, \qquad \theta \in S. \end{displaymath}
This completes the proof of Lemma \ref{lemma9}. Since the coming arguments would see no difference between $\mu,\bar{\mu}$, or $K,\bar{K}$, or $\bar{\zeta},2\bar{\zeta}$, or $\omega,\omega/5$, we save a little on notation and assume that \eqref{form85} already holds with "$1$" in place of "$5$", that is we assume
\begin{equation}\label{form86} \mu(B(1) \cap H_{\theta}(K,\bar{\delta}^{-\sigma + \bar{\zeta}},[\bar{\delta},1])) \geq \omega \cdot \delta^{\eta}, \qquad \theta \in S, \end{equation}
where $\nu(S) \geq \omega \cdot \delta^{\eta}$. Note that if $\bar{\delta} = \delta$, then (a stronger version of) \eqref{form86} follows immediately from our initial counter assumption \eqref{form255}. From this point on in the proof, the reader may essentially forget about the difference between the scales $\delta$ and $\bar{\delta}$: we will work with the properties \eqref{form73} and \eqref{form86}. The second one is automatically satisfied with $\bar{\delta} = \delta$, but securing the first one necessitated the "change of scale" operation witnessed above.  

\subsection{Removing very high multiplicity subsets} The plan of this section is to use the hypothesis \eqref{form201} to remove from $H_{\theta}(K,\bar{\delta}^{-\sigma + \bar{\zeta}},[\bar{\delta},1])$ points with ("very high") multiplicity $\bar{\delta}^{-\sigma}$, while still retaining the lower bound \eqref{form86} for at least $\tfrac{1}{2}$ of the points in $S$. More precisely, we will prove the following:
\begin{lemma}\label{lemma12} If $\bar{\delta} > 0$ and $\eta < \eta_{0}$ are sufficiently small, then there exists a subset $\bar{S} \subset S$ of measure $\nu(\bar{S}) \geq \tfrac{1}{4} \cdot \nu(S)$ with the property
\begin{displaymath} \mu(B(1) \cap H_{\theta}(K,\bar{\delta}^{-\sigma + \bar{\zeta}},[\bar{\delta},1]) \cap G_{\theta}) \geq \omega \cdot \delta^{\eta}, \qquad \theta \in \bar{S}, \end{displaymath}
where
\begin{displaymath}  G_{\theta} := B(1) \, \setminus \left[H_{\theta}(K,(\bar{\delta}^{1/2})^{-\sigma},[5\bar{\delta}^{1/2},5]) \cup H_{\theta}(K,(\bar{\delta}^{1/2})^{-\sigma},[5\bar{\delta},5\bar{\delta}^{1/2}])\right]. \end{displaymath} \end{lemma}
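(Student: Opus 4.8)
The plan is to estimate, on $\nu$-average over $\theta$, the $\mu$-measures of the two sets subtracted off in the definition of $G_{\theta}$, namely $V^{\mathrm{c}}_{\theta} := B(1) \cap H_{\theta}(K,(\bar{\delta}^{1/2})^{-\sigma},[5\bar{\delta}^{1/2},5])$ and $V^{\mathrm{f}}_{\theta} := B(1) \cap H_{\theta}(K,(\bar{\delta}^{1/2})^{-\sigma},[5\bar{\delta},5\bar{\delta}^{1/2}])$, and then to discard from $S$ the directions where either of them captures more than a quarter of the mass \eqref{form86}. The point of the choices of constants "$5$", "$5\bar{\delta}^{1/2}$", "$5\bar{\delta}$" and of the threshold $(\bar{\delta}^{1/2})^{-\sigma}$ is that after a single rescaling each of these sets becomes \emph{exactly} a set of the form appearing in the inductive hypothesis \eqref{form201} at scale $\Delta = \bar{\delta}^{1/2}$.

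For $V^{\mathrm{c}}_{\theta}$: the map $T_{0,5}$ sends $B(5) \to B(1)$ and rescales radii by $1/5$, so Lemma \ref{lemma7} gives $T_{0,5}(V^{\mathrm{c}}_{\theta}) = B(1/5) \cap H_{\theta}(T_{0,5}(K),(\bar{\delta}^{1/2})^{-\sigma},[\bar{\delta}^{1/2},1])$, whence, by Notation \ref{n1} and $B(1/5) \subset B(1)$,
\[ \mu(V^{\mathrm{c}}_{\theta}) \le 5^{\gamma}\,\mu_{B(5)}\bigl(B(1) \cap H_{\theta}(T_{0,5}(K),(\bar{\delta}^{1/2})^{-\sigma},[\bar{\delta}^{1/2},1])\bigr). \]
By Remark \ref{rem1}, $\mu_{B(5)}$ is again a product of an $(\alpha,C_{\alpha})$- and a $(\beta,C_{\beta})$-regular measure with support $T_{0,5}(K)$, so \eqref{form201} applies to it with $\Delta = \bar{\delta}^{1/2}$ (valid once $\delta$, hence $\bar{\delta}^{1/2}$, is small). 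Writing $E$ for the set of $\theta$ where the right-hand factor is $\ge (\bar{\delta}^{1/2})^{\eta_{0}}$, this yields $\mathcal{H}^{\tau}_{\infty}(E) \le (\bar{\delta}^{1/2})^{\eta_{0}}$, hence $\nu(E) \le O\delta^{-\eta}(\bar{\delta}^{1/2})^{\eta_{0}}$ since $\nu$ is $(\tau,O\delta^{-\eta})$-Frostman. Splitting the $\nu$-integral of $\mu(V^{\mathrm{c}}_{\theta})$ over $E$ (using $\mu_{B(5)}(B(1)) \le C_{\gamma}$) and over its complement, and recalling $\bar{\delta} \le \delta^{\tau/4}$ from \eqref{form75} so that $(\bar{\delta}^{1/2})^{\eta_{0}} \le \delta^{\tau\eta_{0}/8}$, I obtain $\int_{0}^{1}\mu(V^{\mathrm{c}}_{\theta})\,d\nu(\theta) \le O\delta^{\tau\eta_{0}/8-\eta}$.

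For $V^{\mathrm{f}}_{\theta}$ I would first cover $K \cap B(1)$ by a family $\mathcal{K}$ of balls of radius $\bar{\delta}^{1/2}$, with $|\mathcal{K}| \le C_{\gamma}\bar{\delta}^{-\gamma/2}$ by $(\gamma,C_{\gamma})$-regularity. For $B = B(z,\bar{\delta}^{1/2}) \in \mathcal{K}$, the map $T_{B}$ rescales radii by $\bar{\delta}^{-1/2}$ and sends $B \to B(1)$, so Lemma \ref{lemma7} and Notation \ref{n1} give $\mu(B \cap H_{\theta}(K,(\bar{\delta}^{1/2})^{-\sigma},[5\bar{\delta},5\bar{\delta}^{1/2}])) = (\bar{\delta}^{1/2})^{\gamma}\mu_{B}(B(1) \cap H_{\theta}(T_{B}(K),(\bar{\delta}^{1/2})^{-\sigma},[5\bar{\delta}^{1/2},5]))$. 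Since $\mu_{B}$ is again such a product (Remark \ref{rem1}), the integrand on the right is handled exactly as in the previous paragraph (with an inner rescaling by $T_{0,5}$ applied to $\mu_{B}$, and $(\mu_{B})_{B(5)}$ once more such a product), giving $\int_{0}^{1}\mu_{B}(\cdots)\,d\nu(\theta) \le O\delta^{\tau\eta_{0}/8-\eta}$ with $O$ independent of $B$. Summing over $\mathcal{K}$ and using $|\mathcal{K}|(\bar{\delta}^{1/2})^{\gamma} \le C_{\gamma}$ — this exact cancellation is the reason the windows were calibrated as they were — yields $\int_{0}^{1}\mu(V^{\mathrm{f}}_{\theta})\,d\nu(\theta) \le O\delta^{\tau\eta_{0}/8-\eta}$ as well. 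Finally, with $B_{\mathrm{c}} := \{\theta : \mu(V^{\mathrm{c}}_{\theta}) > \tfrac14\omega\delta^{\eta}\}$ and $B_{\mathrm{f}}$ defined analogously (where $\omega$ is the constant of \eqref{form86}), Markov's inequality gives $\nu(B_{\mathrm{c}}),\nu(B_{\mathrm{f}}) \le \tfrac{O}{\omega}\delta^{\tau\eta_{0}/8-2\eta}$; assuming $\eta < \tau\eta_{0}/24$ (consistent with \eqref{form256}) and $\delta$ small, this is $\le \tfrac18\omega\delta^{\eta} \le \tfrac18\nu(S)$, so $\bar{S} := S \setminus (B_{\mathrm{c}} \cup B_{\mathrm{f}})$ satisfies $\nu(\bar{S}) \ge \tfrac34\nu(S) \ge \tfrac14\nu(S)$, and for $\theta \in \bar{S}$, since $G_{\theta} = B(1) \setminus (V^{\mathrm{c}}_{\theta} \cup V^{\mathrm{f}}_{\theta})$, \eqref{form86} gives $\mu(B(1) \cap H_{\theta}(K,\bar{\delta}^{-\sigma+\bar{\zeta}},[\bar{\delta},1]) \cap G_{\theta}) \ge \omega\delta^{\eta} - \tfrac14\omega\delta^{\eta} - \tfrac14\omega\delta^{\eta} = \tfrac12\omega\delta^{\eta}$, which is the assertion after renaming $\tfrac12\omega$.

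The main obstacle is the fine-scale set $V^{\mathrm{f}}_{\theta}$: one is forced to cut $B(1)$ into $\sim \bar{\delta}^{-\gamma/2}$ balls of radius $\bar{\delta}^{1/2}$ and reassemble, so the per-ball error from \eqref{form201} must be small enough that, multiplied by the number of balls, it still beats $\delta^{\eta}$; this forces one to reduce each localized piece to \eqref{form201} \emph{at the single scale} $\bar{\delta}^{1/2}$ — so that the measure factor $(\bar{\delta}^{1/2})^{\gamma}$ exactly compensates the count $\bar{\delta}^{-\gamma/2}$ — rather than losing extra powers of $\bar\delta$ along the way, which is precisely why the windows in $G_{\theta}$ are $[5\bar{\delta}^{1/2},5]$ and $[5\bar{\delta},5\bar{\delta}^{1/2}]$ and the multiplicity threshold is $(\bar{\delta}^{1/2})^{-\sigma}$.
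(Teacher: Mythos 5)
Your proof is correct and follows essentially the same strategy as the paper's: rescale so that both $V^{\mathrm{c}}_\theta$ and $V^{\mathrm{f}}_\theta$ become instances of the inductive hypothesis \eqref{form201} at scale $\bar\delta^{1/2}$ (using Lemma \ref{lemma7} and Remark \ref{rem1} to keep everything a product of $(\alpha,C_\alpha)$- and $(\beta,C_\beta)$-regular measures), exploit the $(\tau,O\delta^{-\eta})$-Frostman property of $\nu$ to see that only a small $\nu$-fraction of $\theta \in S$ are exceptional, and then subtract both sets from the lower bound \eqref{form86}. The only cosmetic difference is that the paper handles the coarse set $V^{\mathrm{c}}_\theta$ by a pointwise bound on most of $S$ (obtaining $\mu(V^{\mathrm{c}}_\theta) \lesssim \delta^{\omega(\tau)\eta_0} \ll \omega\delta^{\eta}$ directly) while reserving the Markov/average argument for the fine set $V^{\mathrm{f}}_\theta$, whereas you apply the Markov step uniformly to both; either route gives the lemma.
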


Proving Lemma \ref{lemma12} is possible, because the "inductive" hypothesis \eqref{form201} tells us that the points with multiplicity $\geq \bar{\delta}^{-\sigma}$ have small $\mu$ measure compared to the lower bound $\mu(B(1) \cap H_{\theta}(K,\bar{\delta}^{-\sigma + \bar{\zeta}},[\bar{\delta},1])) \geq \omega \cdot \delta^{\eta}$ obtained in \eqref{form86}. Here we will of course need that $\eta \ll \eta_{0}$, recall \eqref{form256}. We also recall that $\bar{\delta} \leq \delta^{\omega(\tau)}$, so the assumption "$\bar{\delta} > 0$ is sufficiently small" in Lemma \ref{lemma12} can be arranged by taking $\delta > 0$ sufficiently small.

We begin the proof of Lemma \ref{lemma12}. First, we claim that using \eqref{form201}, and if $\bar{\delta}^{1/2} \leq \Delta_{0}$ is sufficiently small, then
\begin{equation}\label{form205} \mathcal{H}^{\tau}_{\infty}(\{\theta \in [0,1] : \mu(B(1) \cap H_{\theta}(K,(\bar{\delta}^{1/2})^{-\sigma},[5\bar{\delta}^{1/2},5])) \geq 5 \cdot (\bar{\delta}^{1/2})^{\eta_{0}}\}) \leq (\bar{\delta}^{1/2})^{\eta_{0}}. \end{equation} 
This would be a direct consequence of \eqref{form201} without the factors of "$5$". As stated, \eqref{form205} also needs a rescaling argument, just like the one between \eqref{form208}-\eqref{form209}, and literally applying \eqref{form201} to the measure $\mu_{B(5)}$. We omit the details.

As a consequence of \eqref{form205}, and the $(\tau,O\delta^{-\eta})$-Frostman property of $\nu$, the $\nu$-measure of those $\theta \in [0,1]$ appearing in \eqref{form205} is bounded from above by 
\begin{displaymath} O\delta^{-\eta} \cdot (\bar{\delta}^{1/2})^{\eta_{0}} \leq O \cdot \delta^{\omega(\tau)\eta_{0} - \eta}. \end{displaymath}
This bound is smaller than $\nu(S)/10 \geq \omega \cdot \delta^{\eta}/10$ if $\eta < \omega(\tau)\eta_{0}/2$, and $\delta > 0$ is assumed sufficiently small. Therefore, with appropriate choices of $\eta$ and $\delta$, for all "$\theta$" in a subset $S' \subset S$ of $\nu$-measure $\nu(S') \geq \tfrac{1}{2}\nu(S)$, the reverse of \eqref{form205} holds, that is,
\begin{displaymath} \mu(B(1) \cap H_{\theta}(K,(\bar{\delta}^{1/2})^{-\sigma},[5\bar{\delta}^{1/2},5])) < 5 \cdot (\bar{\delta}^{1/2})^{\eta_{0}} \lesssim \delta^{\omega(\tau)\eta_{0}}, \qquad \theta \in S'. \end{displaymath} 
But, according to \eqref{form86}, taking $\eta < \omega(\tau)\eta_{0}$ and $\delta > 0$ small enough, as usual, the upper bound on the right is smaller than $\tfrac{1}{2} \cdot \mu(B(1) \cap H_{\theta}(K,\bar{\delta}^{-\sigma + \bar{\zeta}},[\bar{\delta},1]))$. Therefore, 
\begin{equation}\label{form206} \mu(B(1) \cap H_{\theta}(K,\bar{\delta}^{-\sigma + \bar{\zeta}},[\bar{\delta},1]) \, \setminus \, H_{\theta}(K,(\bar{\delta}^{1/2})^{-\sigma},[5\bar{\delta}^{1/2},5])) \geq \omega \cdot \delta^{\eta}, \qquad \theta \in S'. \end{equation}
%There will be no difference between the sets $S$ and $S'$ below (we will only use that $\nu(S') \geq \omega \cdot \delta^{\eta}$), so we assume that $S = S'$ to simplify notation. 

We then repeat a similar operation to remove, further, the following subsets:
\begin{displaymath} B(1) \cap H_{\theta}(K,(\bar{\delta}^{1/2})^{-\sigma},[5\bar{\delta},5\bar{\delta}^{1/2}]). \end{displaymath}
The argument is similar to the one above: we will first check that
\begin{equation}\label{form243} \nu(\{\theta \in [0,1] : \mu(B(1) \cap H_{\theta}(K,(\bar{\delta}^{1/2})^{-\sigma},[5\bar{\delta},5\bar{\delta}^{1/2}])) \geq \tfrac{1}{2} \cdot \omega \cdot \delta^{\eta}\}) \leq \tfrac{1}{2} \cdot \nu(S'). \end{equation}
Here the constant "$\omega$" is temporarily the same as in \eqref{form206}: the point of \eqref{form243} is that for at least $\tfrac{1}{2}$ of the $\nu$ measure of the directions $\theta \in S'$, at most $\tfrac{1}{2}$ of the set appearing in \eqref{form206} can lie in the set shown in \eqref{form243}. Hence the set in \eqref{form243} can, and will, be removed from the set in \eqref{form206} without reducing its measure too much.

To prove \eqref{form243}, we essentially repeat our treatment for the term \eqref{form80} above: we let $\mathcal{K}$ be a minimal cover of $B(1) \cap K$ by discs of radius  radius $5\bar{\delta}^{1/2}$. By the $(\gamma,C_{\gamma})$-regularity of $\mu$, we then have $|\mathcal{K}| \leq C_{\gamma} \cdot (5\bar{\delta}^{1/2})^{-\gamma}$. We write
\begin{equation}\label{form241} \int_{0}^{1} \mu(B(1) \cap H_{\theta}(K,(\bar{\delta}^{1/2})^{-\sigma},[5\bar{\delta},5\bar{\delta}^{1/2}])) \, d\nu(\theta) \leq \sum_{B \in \mathcal{K}} \int_{0}^{1} \mu(B \cap \ldots) \, d\nu(\theta). \end{equation}
Applying Lemma \ref{lemma7} as in \eqref{form240}, the individual terms on the right can be rewritten as
\begin{equation}\label{form242} \mu(B \cap \ldots) = (5\bar{\delta}^{1/2})^{\gamma} \cdot \mu_{B}(B(1) \cap H_{\theta}(T_{B}(K),(\bar{\delta}^{1/2})^{-\sigma},[\bar{\delta}^{1/2},1])), \quad B \in \mathcal{K}, \end{equation}
where $\mu_{B}$ is the renormalised and rescaled measure associated to the disc $B$, which is again a product of some $(\alpha,C_{\alpha})$-regular measure and some $(\beta,C_{\beta})$-regular measure. Now, assuming that $(\bar{\delta}^{1/2}) \leq \Delta_{0}$, the hypothesis \eqref{form201} applies to the measures $\mu_{B}$ individually, and shows that
\begin{equation}\label{form137} \mathcal{H}^{\tau}_{\infty}(\{\theta \in [0,1] : \mu_{B}(B(1) \cap H_{\theta}(T_{B}(K),(\bar{\delta}^{1/2})^{-\sigma},[\bar{\delta}^{1/2},1])) \geq (\bar{\delta}^{1/2})^{\eta_{0}}\}) \leq (\bar{\delta}^{1/2})^{\eta_{0}} \end{equation} 
for every $B \in \mathcal{K}$. This, and the $(\tau,O\delta^{-\eta})$-Frostman property of $\nu$, implies that if $\eta \ll \omega(\tau)\eta_{0}$ is small enough, then the $\nu$ measure of the points $\theta \in [0,1]$ in \eqref{form137} is no larger than $O\delta^{-\eta} \cdot (\bar{\delta}^{1/2})^{\eta_{0}} \leq \delta^{3\eta}/(2C_{\gamma}^{2})$. Combining this estimate with \eqref{form242}, and noting the uniform upper bound $\mu_{B}(B(1)) \leq C_{\gamma}$, we arrive at the following upper bound for \eqref{form241}:
\begin{displaymath} \eqref{form241} \leq |\mathcal{K}| \cdot (5\bar{\delta}^{1/2})^{\gamma} \cdot [(\bar{\delta}^{1/2})^{\eta_{0}} + C_{\gamma}  \cdot \tfrac{\delta^{3\eta}}{2C_{\gamma}^{2}}] \leq \delta^{3\eta}. \end{displaymath}
By Chebyshev's inequality, this shows that
\begin{displaymath} \nu(\{\theta \in [0,1] : \mu(B(1) \cap H_{\theta}(K,(\bar{\delta}^{1/2})^{-\sigma},[5\bar{\delta},5\bar{\delta}^{1/2}])) \geq \tfrac{1}{2} \cdot \omega \cdot \delta^{\eta}\}) \leq \tfrac{2}{\omega} \cdot \delta^{2\eta}. \end{displaymath} 
Recalling that $\nu(S') \geq \tfrac{1}{2}\nu(S) \geq \omega \cdot \delta^{\eta}$, the right hand side is at most $\tfrac{1}{2} \cdot \nu(S')$ if $\delta > 0$ is chosen sufficiently small. This proves \eqref{form243}.

Now, after discarding at most half of (the $\nu$ measure of) the points in $S'$, we finally arrive at a set $\bar{S} \subset S' \subset S$ with measure $\nu(\bar{S}) \geq \tfrac{1}{4}\nu(S)$ such that a converse of \eqref{form243} holds for all $\theta \in \bar{S}$. In other words, we may upgrade \eqref{form206} to
\begin{equation}\label{form207} \mu(B(1) \cap H_{\theta}(K,\bar{\delta}^{-\sigma + \bar{\zeta}},[\bar{\delta},1]) \cap G_{\theta}) \geq \omega \cdot \delta^{\eta}, \qquad \theta \in \bar{S}, \end{equation}
where
\begin{displaymath} G_{\theta} := B(1) \, \setminus \left[H_{\theta}(K,(\bar{\delta}^{1/2})^{-\sigma},[5\bar{\delta}^{1/2},5]) \cup H_{\theta}(K,(\bar{\delta}^{1/2})^{-\sigma},[5\bar{\delta},5\bar{\delta}^{1/2}])\right]. \end{displaymath}
This completes the proof of Lemma \ref{lemma12}. We now abbreviate
\begin{equation}\label{form210} K_{\theta} := B(1) \cap H_{\theta}(K,\bar{\delta}^{-\sigma + \bar{\zeta}},[\bar{\delta},1]) \cap G_{\theta}, \qquad \theta \in \bar{S}. \end{equation} 
To end this section, we claim that there exist two points $\theta_{0},\theta_{1} \in \bar{S} \subset S$ with separation 
\begin{equation}\label{form128} \bar{\delta}^{(1 + \epsilon)/2} \leq |\theta_{0} - \theta_{1}| \leq \bar{\delta}^{1/2} \end{equation}
with the property $\mu(K_{\theta_{0}} \cap K_{\theta_{1}}) \geq \omega \cdot \delta^{4\eta}$. The scale $\bar{\delta}$ has been chosen so that this is possible; it may be a good idea to recall \eqref{form73} at this point. Since 
\begin{displaymath} \omega \cdot \delta^{\eta} \leq \nu(\bar{S}) = \sum_{I \in \mathcal{D}_{1/2}} \nu(I) \cdot \nu_{I}(\bar{S}), \end{displaymath}
and $\nu$ is a probability measure, there exists an arc $I \in \mathcal{D}_{1/2} = \mathcal{D}_{\bar{\delta}^{1/2}}$ of length $\bar{\delta}^{1/2}$ with the property $\nu_{I}(\bar{S}) \geq \omega \cdot \delta^{\eta}$. Write $S_{I} := I \cap \bar{S}$. Then, $\mu(K_{\theta}) \geq \omega \cdot \delta^{\eta}$ for all $\theta \in S_{I}$ according to \eqref{form207}, and hence
\begin{align*} \omega \cdot \delta^{2\eta} & \leq \int_{S_{I}} \mu(K_{\theta}) \, d\nu_{I}(\theta) \leq \int_{B(1)} \nu_{I}(\{\theta \in S_{I} : x \in K_{\theta}\}) \, d\mu(x)\\
& \leq C_{\gamma} \cdot \left( \int_{B(1)} \nu_{I}(\{(\theta_{0},\theta_{1}) \in S_{I} \times S_{I} : x \in K_{\theta_{0}} \cap K_{\theta_{1}}\}) \, d\mu(x) \right)^{1/2}\\
& = C_{\gamma} \cdot \left( \iint_{S_{I} \times S_{I}} \mu(K_{\theta_{0}} \cap K_{\theta_{1}}) \, d\nu_{I}(\theta_{1}) \, d\nu_{I}(\theta_{0}) \right)^{1/2}. \end{align*} 
On the other hand, we infer from \eqref{form73} and the upper bound $\mu(K_{\theta_{0}} \cap K_{\theta_{1}}) \leq \mu(B(1)) \leq C_{\gamma}$ that
\begin{displaymath} \int_{S_{I}}\int_{S_{I} \cap B(\theta_{0},\bar{\delta}^{(1 + \epsilon)/2})} \mu(K_{\theta_{0}} \cap K_{\theta_{1}}) \, d\nu_{I}(\theta_{1}) \, d\nu_{I}(\theta_{0}) \lesssim C_{\gamma} \cdot \bar{\delta}^{\omega(\alpha,\beta,\sigma,\tau)}. \end{displaymath}  
Therefore, if $\eta > 0$ is sufficiently small, depending only on $\alpha,\beta,\sigma,\tau$, and $\delta > 0$ is small enough, there must exist a pair of points $\theta_{0},\theta_{1} \in S_{I} \subset \bar{S}$ such that $\theta_{1} \notin B(\theta_{0},\bar{\delta}^{(1 + \epsilon)/2})$ and such that $\mu(K_{\theta_{0}} \cap K_{\theta_{1}}) \geq \omega \cdot \delta^{4\eta}$. If one cares to track the constants, then the number "$\omega(\alpha,\beta,\sigma,\tau) > 0$" here is the one coming from \eqref{form73}, and there $\omega(\alpha,\beta,\sigma,\tau) \gtrsim \epsilon \tau^{2}$, so the requirement for $\eta$ mentioned in \eqref{form256} is sufficient. 

We now fix such a pair of points $\theta_{0},\theta_{1} \in \bar{S} \subset S$ for the rest of the proof, and we write
\begin{equation}\label{form211} \bar{K} := K_{\theta_{0}} \cap K_{\theta_{1}}. \end{equation}  
As we will see in the next section, the existence of the set $\bar{K}$ will lead to a contradiction, and this will complete the proof of Proposition \ref{mainProp}.

\subsection{The geometry of the set $\bar{K}$}\label{s:tubeSelection} In this subsection, we will establish a few properties of $\bar{K}$, as in \eqref{form211}, at scales $\bar{\delta}$ and $\bar{\delta}^{1/2}$. We list the desired properties imprecisely in \nref{T1}-\nref{T2}; the rigorous versions of these properties are stated later, in Lemma \ref{lemma10}, once all the relevant notation has been set up in the course of this section. 

We aim to find a tube $\mathbf{T}_{0} \subset \R^{2}$ of the form $\mathbf{T}_{0} = \pi_{\theta_{0}}^{-1}(I)$ with $|I| \sim \bar{\delta}^{1/2}$, such that
\begin{itemize}
\item[(T1) \phantomsection\label{T1}] $N_{\bar{\delta}^{1/2}}(\bar{K} \cap \mathbf{T}_{0}) =: \M \gtrapprox (\bar{\delta}^{1/2})^{-\sigma}$, and $N_{\bar{\delta}}(\bar{K} \cap \mathbf{T}_{0}) \approx \M \cdot (\bar{\delta}^{1/2})^{-\gamma}$,
\item[(T2) \phantomsection\label{T2}] $N_{\bar{\delta}}(\pi_{\theta_{j}}(\bar{K} \cap \mathbf{T}_{0})) \lessapprox (\bar{\delta}^{1/2})^{\sigma - \gamma}$ for both $j \in \{0,1\}$.
\end{itemize}
The set $\bar{K} \cap \mathbf{T}_{0}$ (after some extra pruning) will eventually play the role of the set $A' \times B'$ discussed in the proof outline, Section \ref{s:outline}, see also Remark \ref{rem:outline}.

Finding a tube $\mathbf{T}_{0}$ with the properties \nref{T1}-\nref{T2} is based on the fact that
\begin{equation}\label{form92} \bar{K} \subset B(1) \cap H_{\theta_{j}}(K,\bar{\delta}^{-\sigma + \bar{\zeta}},[\bar{\delta},1]) \cap G_{\theta_{j}}, \qquad j \in \{0,1\}, \end{equation}
recall \eqref{form211}. We start by claiming the following:
\begin{equation}\label{form90} N_{\bar{\delta}}(\pi_{\theta_{j}}(\bar{K})) \lesssim C_{\gamma} \cdot \bar{\delta}^{\sigma - \gamma - \bar{\zeta}}, \qquad j \in \{0,1\}. \end{equation}
To see \eqref{form90}, fix $\theta \in \{\theta_{0},\theta_{1}\}$, and let $\mathcal{T}_{\theta}$ be a minimal cover of $\bar{K}$ by tubes of the form $T = \pi_{\theta}^{-1}\{I\}$, where $I \in \mathcal{D}_{\bar{\delta}}(\R)$. (We will write "$T$" for $\bar{\delta}$-tubes and "$\mathbf{T}$" for $\bar{\delta}^{1/2}$-tubes.) Then, each $T \in \mathcal{T}_{\theta}$ contains a point $x_{T} \in \bar{K} \subset H_{\theta}(K,\bar{\delta}^{-\sigma + \bar{\zeta}},[\bar{\delta},1])$, hence $B(x,1) \cap \pi_{\theta}^{-1}\{\pi_{\theta}(x_{T})\} \subset B(2) \cap T$, and
\begin{displaymath} N_{\bar{\delta}}(K_{\bar{\delta}} \cap B(2) \cap T) \geq \m_{K,\theta}(x_{T} \mid [\bar{\delta},1]) \geq \bar{\delta}^{-\sigma + \bar{\zeta}}. \end{displaymath} 
This implies that
\begin{displaymath} |\mathcal{T}_{\theta}| \cdot \bar{\delta}^{-\sigma + \bar{\zeta}} \lesssim N_{\bar{\delta}}(K_{\bar{\delta}} \cap B(2)) \lesssim C_{\gamma} \cdot \bar{\delta}^{-\gamma}, \end{displaymath}
and \eqref{form90} follows by rearranging.

We next consider the projections of $\bar{K}$ at scale $\bar{\delta}^{1/2}$, but we first do an initial reduction. Let $\mathcal{K}_{\bar{\delta}^{1/2}}$ be a minimal cover of $\bar{K}$ by discs of radius $\bar{\delta}^{1/2}$, so in particular $\bar{K} \cap B \neq \emptyset$ for all $B \in \mathcal{K}_{\bar{\delta}^{1/2}}$. Since $\bar{K} \subset K \cap B(1)$, and $\mu(\bar{K}) \geq \omega \cdot \delta^{4\eta}$, we have 
\begin{equation}\label{form97} \omega \cdot \delta^{4\eta} \cdot (\bar{\delta}^{1/2})^{-\gamma} \leq |\mathcal{K}_{\bar{\delta}^{1/2}}| \leq N_{\bar{\delta}^{1/2}}(K \cap B(1)) \leq C_{\gamma} \cdot (\bar{\delta}^{1/2})^{-\gamma}. \end{equation}
A disc $B \in \mathcal{K}_{\bar{\delta}^{1/2}}$ is called \emph{heavy} if 
\begin{equation}\label{form215} \mu(B \cap \bar{K}) \geq (\mu(\bar{K})/2C_{\gamma}) \cdot (\bar{\delta}^{1/2})^{\gamma} \geq \omega \cdot \delta^{4\eta} \cdot (\bar{\delta}^{1/2})^{\gamma}. \end{equation}
Then, the total $\mu$ measure of the \emph{light} (that is, non-heavy) discs if bounded from above by $|\mathcal{K}_{\bar{\delta}^{1/2}}| \cdot (\mu(\bar{K})/2C_{\gamma}) \cdot (\bar{\delta}^{1/2})^{\gamma} \leq \mu(\bar{K})/2$. Therefore, if we replace $\bar{K}$ by the intersection
\begin{equation}\label{form111} \bar{K} \cap \bigcup_{B \in \mathcal{K}_{\bar{\delta}^{1/2}} \mathrm{\,heavy}} B, \end{equation}
then $\mu(\bar{K}) \geq (\omega/2) \cdot \delta^{4\eta}$, and the key property \eqref{form92} of $\bar{K}$ remains valid (it is worth emphasising here that the high-multiplicity set in \eqref{form92} is defined relative to $K$, not $\bar{K}$). These are all the properties we will need in the sequel, so, without loss of generality, we may assume that all the discs in $\mathcal{K}_{\bar{\delta}^{1/2}}$ are heavy.

Now, in order to consider the $\pi_{\theta_{0}}$ and $\pi_{\theta_{1}}$ projections of $\bar{K}$ at scale $\bar{\delta}^{1/2}$, let $\mathcal{T}_{\bar{\delta}^{1/2}}$ be a minimal cover of $\bar{K}$ by tubes $\mathbf{T} = \pi_{\theta_{0}}^{-1}(I)$ with $I \in \mathcal{D}_{\bar{\delta}^{1/2}}(\R)$. It will be good to keep in mind that $|\theta_{0} - \theta_{1}| \leq \bar{\delta}^{1/2}$ by \eqref{form128}, so the $\pi_{\theta_{0}}$ and $\pi_{\theta_{1}}$ projections of $\bar{K}$ are virtually indistinguishable at scale $\bar{\delta}^{1/2}$: for example 
\begin{displaymath} N_{\bar{\delta}^{1/2}}(\pi_{\theta_{0}}(\bar{K})) \sim N_{\bar{\delta}^{1/2}}(\pi_{\theta_{1}}(\bar{K})). \end{displaymath}
Also, even though the tubes in $\mathcal{T}_{\bar{\delta}^{1/2}}$ were defined via the projection $\pi_{\theta_{0}}$, they still satisfy the following property for both $j \in \{0,1\}$: the sets $\pi_{\theta_{j}}(B(1) \cap \mathbf{T})$ have bounded overlap as $\mathbf{T} \in \mathcal{T}_{\bar{\delta}^{1/2}}$ varies. 

We now claim the following upper bound, assuming that $\delta > 0$ and $\eta > 0$ are sufficiently small:
\begin{equation}\label{form99} N_{\bar{\delta}^{1/2}}(\pi_{\theta_{0}}(\bar{K})) \sim |\mathcal{T}_{\bar{\delta}^{1/2}}| \lesssim C_{\gamma} \cdot (\bar{\delta}^{1/2})^{\sigma - \gamma - 4\bar{\zeta}}.\end{equation} 
The proof goes as follows. Every tube $\mathbf{T} \in \mathcal{T}_{\bar{\delta}^{1/2}}$ meets at least one (heavy!) ball $B_{\mathbf{T}} \in \mathcal{K}_{\bar{\delta}^{1/2}}$. We will shortly see that
\begin{equation}\label{form98} N_{\bar{\delta}}(\pi_{\theta_{j}}(\bar{K} \cap B)) \geq (\bar{\delta}^{1/2})^{\sigma - \gamma + 2\bar{\zeta}}, \qquad j \in \{0,1\}, \, B \in \mathcal{K}_{\bar{\delta}^{1/2}}, \end{equation}
so in particular this holds with $B = B_{\mathbf{T}}$. Since the sets $\pi_{\theta_{0}}(\bar{K} \cap B_{\mathbf{T}})$ have bounded overlap as $\mathbf{T}$ varies in $\mathcal{T}_{\bar{\delta}^{1/2}}$, it follows from \eqref{form90} and \eqref{form98} that
\begin{displaymath} C_{\gamma} \cdot \bar{\delta}^{\sigma - \gamma - \bar{\zeta}} \gtrsim N_{\bar{\delta}}(\pi_{\theta_{0}}(\bar{K})) \gtrsim |\mathcal{T}_{\bar{\delta}^{1/2}}| \cdot (\bar{\delta}^{1/2})^{\sigma - \gamma + 2\bar{\zeta}}, \end{displaymath}
and then \eqref{form99} follows by rearranging terms. 

Let us then prove \eqref{form98}. Fix $B \in \mathcal{K}_{\bar{\delta}^{1/2}}$, and keep in mind that $\mu(B \cap \bar{K}) \geq \omega \cdot \delta^{4\eta} \cdot \bar{\delta}^{\gamma/2}$, since all the balls in $\mathcal{K}_{\bar{\delta}^{1/2}}$ are heavy. Let us first check that \eqref{form98} follows if we manage to show the next claim: if $T_{\bar{\delta}} = \pi_{\theta_{j}}^{-1}(I)$ is an arbitrary tube of width $|I| = \bar{\delta}$, then
\begin{equation}\label{form100} N_{\bar{\delta}}(B \cap \bar{K} \cap T_{\bar{\delta}}) \leq (\bar{\delta}^{1/2})^{-\sigma - \bar{\zeta}}, \qquad B \in \mathcal{K}_{\bar{\delta}^{1/2}}. \end{equation}
Indeed, since $\mu(B \cap \bar{K}) \geq \omega \cdot \delta^{4\eta} \cdot (\bar{\delta}^{1/2})^{\gamma}$ for all (heavy) discs $B \in \mathcal{K}_{\bar{\delta}^{1/2}}$, we have 
\begin{displaymath} N_{\bar{\delta}}(B \cap \bar{K}) \geq \omega \cdot \delta^{4\eta} \cdot (\bar{\delta}^{1/2})^{-\gamma} \end{displaymath}
by the $(\gamma,C_{\gamma})$-regularity of $\mu$. Combining this lower bound with \eqref{form100} implies that it takes $\geq \omega \cdot \delta^{4\eta} \cdot (\bar{\delta}^{1/2})^{\sigma - \gamma + \bar{\zeta}}$ tubes of the form $T_{\bar{\delta}} = \pi_{\theta_{j}}^{-1}(I)$, with $|I| = \bar{\delta}$, to cover $B \cap \bar{K}$. This implies \eqref{form98} for $\delta,\eta > 0$ small enough that $\omega \cdot \delta^{4\eta} \geq \bar{\delta}^{\bar{\zeta}/2}$. 

It remains to establish \eqref{form100}. Fix a tube $T_{\bar{\delta}} := \pi_{\theta_{j}}^{-1}(I)$ with $|I| = \bar{\delta}$, and assume to the contrary that $B \in \mathcal{K}_{\bar{\delta}^{1/2}}$ is a disc with
\begin{equation}\label{form138} N_{\bar{\delta}}(B \cap \bar{K} \cap T_{\bar{\delta}}) \geq (\bar{\delta}^{1/2})^{-\sigma - \bar{\zeta}}. \end{equation} 
Then, if $x_{0} \in B \cap \bar{K} \cap T_{\bar{\delta}}$ is arbitrary, note that $B \subset B(x_{0},5\bar{\delta}^{1/2})$. From this, combined with \eqref{form138}, and assuming $\delta > 0$ sufficiently small in terms of $\bar{\zeta}$, it follows easily that
\begin{equation}\label{form139} \m_{K,\theta_{j}}(x_{0} \mid [5\bar{\delta},5\bar{\delta}^{1/2}]) := N_{5\bar{\delta}}(B(x_{0},5\bar{\delta}^{1/2}) \cap K_{5\bar{\delta}} \cap \pi_{\theta_{j}}^{-1}\{\pi_{\theta_{j}}(x_{0})\}) \geq (\bar{\delta}^{1/2})^{-\sigma},  \end{equation}
see Figure \ref{fig1} for further details. In other words $x_{0} \in \bar{K} \cap H_{\theta_{j}}(K,(\bar{\delta}^{1/2})^{-\sigma},[5\bar{\delta},5\bar{\delta}^{1/2}])$. However, by the definition in \eqref{form211}, the set $\bar{K}$ is a subset of $K_{\theta_{j}}$, and this $K_{\theta_{j}}$, by its definition in \eqref{form210}, contains no points of $H_{\theta_{j}}(K,(\bar{\delta}^{1/2})^{-\sigma},[5\bar{\delta},5\bar{\delta}^{1/2}])$. Therefore \eqref{form100} holds, and this completes the proof of \eqref{form99}. 

\begin{figure}
\begin{center}
\begin{overpic}[scale = 1.1]{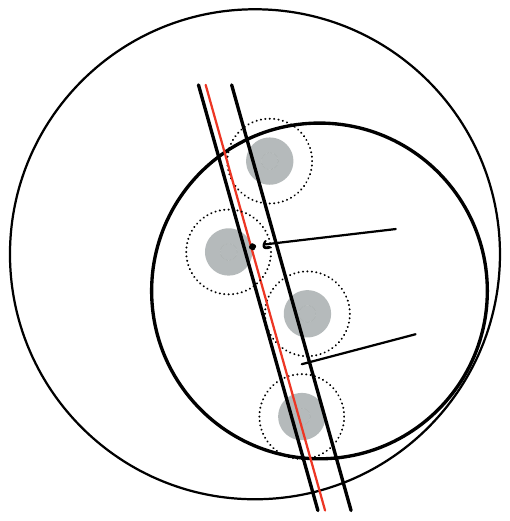}
\put(65,45){$B$}
\put(-10,35){$B(x_{0},5\bar{\delta}^{1/2})$}
\put(75,23){$T_{\bar{\delta}}$}
\put(72,38){$x_{0}$}
\end{overpic}
\caption{One of the heavy discs $B \in \mathcal{K}_{\bar{\delta}^{1/2}}$ intersected with a tube $T_{\bar{\delta}} = \pi_{\theta_{j}}^{-1}(I)$, and a point $x_{0} \in B \cap \bar{K} \cap T_{\bar{\delta}}$. The grey discs form a minimal $\bar{\delta}$-cover for $B \cap \bar{K} \cap T_{\bar{\delta}}$, as in \eqref{form138}, and their $5$-times enlargements, shown in dotted lines, all intersect the red line $\pi_{\theta_{j}}^{-1}\{\pi_{\theta_{j}}(x_{0})\}$. This gives \eqref{form139}.}\label{fig1}
\end{center}
\end{figure}

We have now shown, in \eqref{form99}, that $\bar{K}$ can be covered by $\lesssim C_{\gamma} \cdot (\bar{\delta}^{1/2})^{\sigma - \gamma - 4\bar{\zeta}}$ tubes of the form $\mathbf{T} = \pi_{\theta_{0}}^{-1}(I)$, $I \in \mathcal{D}_{\bar{\delta}^{1/2}}(\R)$, whose collection we denoted $\mathcal{T}_{\bar{\delta}^{1/2}}$. This implies that the average tube in $\mathcal{T}_{\bar{\delta}^{1/2}}$ meets 
\begin{displaymath} \gtrsim C_{\gamma}^{-1} \cdot |\mathcal{K}_{\bar{\delta}^{1/2}}| \cdot (\bar{\delta}^{1/2})^{\gamma - \sigma + 4\bar{\zeta}} \stackrel{\eqref{form97}}{\geq} \omega \cdot \delta^{4\eta} \cdot (\bar{\delta}^{1/2})^{-\sigma + 4\bar{\zeta}} \end{displaymath}
discs in $\mathcal{K}_{\bar{\delta}^{1/2}}$. For technical convenience, we wish to arrange that the statement above holds for every tube in $\mathcal{T}_{\bar{\delta}^{1/2}}$, and this can be accomplished by another pruning argument, as follows. We say that a tube $\mathbf{T} \in \mathcal{T}_{\bar{\delta}^{1/2}}$ is \emph{heavy} if
\begin{displaymath} |\{B \in \mathcal{K}_{\bar{\delta}^{1/2}} : B \cap \mathbf{T} \neq \emptyset\}| \geq \omega \cdot \delta^{4\eta} \cdot (\bar{\delta}^{1/2})^{-\sigma + 4\bar{\zeta}}, \end{displaymath}
for a suitable constant $\omega > 0$. Since $|\mathcal{K}_{\bar{\delta}^{1/2}}| \geq \omega \cdot  \delta^{4\eta} \cdot (\bar{\delta}^{1/2})^{-\gamma}$ by \eqref{form97}, and $|\mathcal{T}_{\bar{\delta}^{1/2}}| \lesssim C_{\gamma} \cdot (\bar{\delta}^{1/2})^{\sigma - \gamma - 4\bar{\zeta}}$ as we just argued, the union of the \emph{light} (non-heavy) tubes in $\mathcal{T}_{\bar{\delta}^{1/2}}$ intersects at most half of the discs in $\mathcal{K}_{\bar{\delta}^{1/2}}$, assuming that the "$\omega$" constant in the definition of heaviness above is chosen appropriately. We now redefine $\mathcal{K}_{\bar{\delta}^{1/2}}$ to be those discs in (former) $\mathcal{K}_{\bar{\delta}^{1/2}}$ which intersect a heavy tube, and we redefine $\bar{K}$ to be the part of $\bar{K}$ covered by the union of the (remaining) discs in $\mathcal{K}_{\bar{\delta}^{1/2}}$. We also restrict $\mathcal{T}_{\bar{\delta}^{1/2}}$ to the heavy tubes. With this new notation, and taking $\delta,\eta > 0$ small enough so that $\omega \cdot \delta^{4\eta} \geq (\bar{\delta}^{1/2})^{\bar{\zeta}}$, 
\begin{equation}\label{form104} |\{B \in \mathcal{K}_{\bar{\delta}^{1/2}} : B \cap \mathbf{T} \neq \emptyset\}| \geq \omega \cdot \delta^{4\eta} \cdot (\bar{\delta}^{1/2})^{-\sigma + 4\bar{\zeta}} \geq (\bar{\delta}^{1/2})^{-\sigma + 5\bar{\zeta}}, \qquad \mathbf{T} \in \mathcal{T}_{\bar{\delta}^{1/2}}. \end{equation}
Moreover, $\mathcal{K}_{\bar{\delta}^{1/2}}$ continues to satisfy the estimates from \eqref{form97}, namely $|\mathcal{K}_{\bar{\delta}^{1/2}}| \approx (\bar{\delta}^{1/2})^{-\gamma}$. The tubes in $\mathcal{T}_{\bar{\delta}^{1/2}}$ need not quite cover $\bar{K}$ any longer. This is because some disc in $\mathcal{K}_{\bar{\delta}^{1/2}}$ might have been initially half-half covered by a light tube and a heavy tube; then the ball was selected to the new $\mathcal{K}_{\bar{\delta}^{1/2}}$, but is not covered by the heavy tubes it touches. To fix this, we inflate the tubes in $\mathcal{T}_{\bar{\delta}^{1/2}}$ by a factor of $3$ without changing notation: then $\mathcal{T}_{\bar{\delta}^{1/2}}$ consists of tubes of width $3\bar{\delta}^{1/2}$, and
\begin{equation}\label{form213} \bar{K} = \bigcup_{B \in \mathcal{K}_{\bar{\delta}^{1/2}}} \bar{K} \cap B \subset \bigcup_{\mathbf{T} \in \mathcal{T}_{\bar{\delta}^{1/2}}} \mathbf{T}. \end{equation}
We also write
\begin{equation}\label{form110} \bar{K}_{\mathbf{T}} := \mathop{\bigcup_{B \in \mathcal{K}_{\bar{\delta}^{1/2}}}}_{B \subset \mathbf{T}} \bar{K} \cap B \subset \mathbf{T}, \qquad \mathbf{T} \in \mathcal{T}_{\bar{\delta}^{1/2}}. \end{equation}
Since the tubes in $\mathcal{T}_{\bar{\delta}^{1/2}}$ are heavy (and since "$\mathbf{T}$" now already refers to the $3$-times inflated tubes), the cardinality of this union is bounded from below by \eqref{form104}, for every $\mathbf{T} \in \mathcal{T}_{\bar{\delta}^{1/2}}$.

We would next like to show that for "most" of the tubes $\mathbf{T} \in \mathcal{T}_{\bar{\delta}^{1/2}}$, the projections $\pi_{\theta_{j}}(\bar{K}_{\mathbf{T}})$, for both $j \in \{0,1\}$, are fairly small at scale $\bar{\delta}$, say
\begin{equation}\label{form107} N_{\bar{\delta}}(\pi_{\theta_{j}}(\bar{K}_{\mathbf{T}})) \leq (\bar{\delta}^{1/2})^{\sigma - \gamma - 6\bar{\zeta}}, \qquad j \in \{0,1\}. \end{equation} 
The only clue available is \eqref{form90}, which controls the $\pi_{\theta_{j}}$-projections of the whole set $\bar{K}$. For $j \in \{0,1\}$ fixed, this can be easily used to show that there are only few tubes $\mathbf{T} \in \mathcal{T}_{\bar{\delta}^{1/2}}$ such that \eqref{form107} fails. Namely, if $\mathcal{T}_{\bar{\delta}^{1/2},j} \subset \mathcal{T}_{\bar{\delta}^{1/2}}$ is the sub-family of "bad" tubes for which \eqref{form107} fails, then it follows from \eqref{form90}, and the bounded overlap of the projections $\pi_{\theta_{j}}(\bar{K}_{\mathbf{T}})$, $\mathbf{T} \in \mathcal{T}_{\bar{\delta}^{1/2}}$, that
\begin{displaymath} |\mathcal{T}_{\bar{\delta}^{1/2},j}| \cdot (\bar{\delta}^{1/2})^{\sigma - \gamma - 6\bar{\zeta}} \lesssim N_{\bar{\delta}}(\pi_{\theta_{j}}(\bar{K})) \lesssim C_{\gamma} \cdot \bar{\delta}^{\sigma - \gamma - \bar{\zeta}}, \qquad j \in \{0,1\}, \end{displaymath}
and hence
\begin{equation}\label{form108} |\mathcal{T}_{\bar{\delta}^{1/2},j}| \leq (\bar{\delta}^{1/2})^{\sigma - \gamma + 3\bar{\zeta}}, \end{equation}
assuming that $\bar{\delta}$ is so small that the constants (implicit and $C_{\gamma}$) are bounded from above by $(\bar{\delta}^{1/2})^{-\bar{\zeta}}$. So, now we know that \eqref{form107} can only fail for few tubes in $\mathcal{T}_{\bar{\delta}^{1/2}}$. What we really wanted was, instead, that \eqref{form107} holds for "most" tubes in $\mathcal{T}_{\bar{\delta}^{1/2}}$. To deduce the latter statement from the former, we need to show that the families $\mathcal{T}_{\bar{\delta}^{1/2},j}$ of "bad" tubes above only constitute a small fraction of all the tubes in $\mathcal{T}_{\bar{\delta}^{1/2}}$. According to \eqref{form108}, this follows if we manage to show that
\begin{equation}\label{form91} |\mathcal{T}_{\bar{\delta}^{1/2}}| \geq (\bar{\delta}^{1/2})^{\sigma - \gamma + 2\bar{\zeta}}. \end{equation} 
The proof of \eqref{form91} is extremely similar to the proof of the lower bound in \eqref{form98}. Instead of showing \eqref{form91} directly, we prove that 
\begin{equation}\label{form212} N_{\bar{\delta}^{1/2}}(\bar{K} \cap \mathbf{T}) \leq (\bar{\delta}^{1/2})^{-\sigma - \bar{\zeta}}, \qquad \mathbf{T} \in \mathcal{T}_{\bar{\delta}^{1/2}}, \end{equation}
which is roughly a reverse of \eqref{form104}. Once \eqref{form212} has been established, \eqref{form91} follows (assuming that $\delta,\eta > 0$ are small enough, as usual), since the union of the tubes in $\mathcal{T}_{\bar{\delta}^{1/2}}$ covers all the discs in $\mathcal{K}_{\bar{\delta}^{1/2}}$ by \eqref{form213}, and $|\mathcal{K}_{\bar{\delta}^{1/2}}| \geq \omega \cdot \delta^{4\eta} \cdot (\bar{\delta}^{1/2})^{-\gamma}$ by \eqref{form97}.

To prove \eqref{form212}, fix $x_{0} \in \bar{K} \cap \mathbf{T}$ arbitrary, and note that
\begin{equation}\label{form214} \m_{K,\theta_{0}}(x_{0} \mid [5\bar{\delta}^{1/2},5]) = N_{5\bar{\delta}^{1/2}}(B(x_{0},5) \cap K_{5\bar{\delta}^{1/2}} \cap \pi_{\theta_{0}}^{-1}\{\pi_{\theta_{0}}(x_{0})\}) \gtrsim N_{\bar{\delta}^{1/2}}(\bar{K} \cap \mathbf{T}), \end{equation}
using that $\bar{K} \subset K \subset B(1)$, and the width of $\mathbf{T}$ is $3\bar{\delta}^{1/2}$. The geometry of the inequality in \eqref{form214} is similar to the one depicted in Figure \ref{fig1}, the main difference being that the scales "$\bar{\delta}$" and "$\bar{\delta}^{1/2}$" are replaced by "$\bar{\delta}^{1/2}$" and "$1$". Since $\bar{K} \subset K_{\theta_{0}} \subset G_{\theta_{0}}$ by \eqref{form92}, we have 
\begin{displaymath} x_{0} \notin H_{\theta_{0}}(K,(\bar{\delta}^{1/2})^{-\sigma},[5\bar{\delta}^{1/2},5]), \end{displaymath}
and hence the left hand side in \eqref{form214} is no larger than $(\bar{\delta}^{1/2})^{-\sigma}$. For $\delta > 0$ sufficiently small, this yields \eqref{form212}. 

Combining \eqref{form108}-\eqref{form91}, we see that if $\bar{\delta} > 0$ is small enough, then only a small fraction of the tubes in $\mathcal{T}_{\bar{\delta}^{1/2}}$ lies in $\mathcal{T}_{\bar{\delta}^{1/2},0} \cup \mathcal{T}_{\bar{\delta}^{1/2},1}$. In particular, we may find a tube $\mathbf{T}_{0} \in \mathcal{T}_{\bar{\delta}^{1/2}} \, \setminus \, [\mathcal{T}_{\bar{\delta}^{1/2},0} \cup \mathcal{T}_{\bar{\delta}^{1/2},1}]$. We gather the relevant properties of $\mathbf{T}_{0}$ for future reference:

\begin{lemma}\label{lemma10} There exists a tube $\mathbf{T}_{0} \subset \R^{2}$ of the form $\mathbf{T}_{0} = \pi_{\theta_{0}}^{-1}(I_{0})$, where $I_{0} \subset \R$ is an interval of length $3\bar{\delta}^{1/2}$, such that $\mathbf{T}_{0}$ has the following properties:
\begin{itemize}
\item[(K1) \phantomsection \label{K1}] $|\{B \in \mathcal{K}_{\bar{\delta}^{1/2}} : B \subset \mathbf{T}_{0}\}| \geq (\bar{\delta}^{1/2})^{-\sigma + 5\bar{\zeta}}$ by \eqref{form104}, and all the balls $B \in \mathcal{K}_{\bar{\delta}^{1/2}}$ here are heavy, that is, $\mu(B \cap \bar{K}) \geq \omega \cdot \delta^{4\eta} \cdot \bar{\delta}^{\gamma/2}$, recall \eqref{form215}. 
\item[(K2) \phantomsection \label{K2}] The set $\bar{K}_{0} := \bar{K}_{\mathbf{T}_{0}}$ is a subset of $G_{\theta_{0}} \cap G_{\theta_{1}}$, and has small $\pi_{\theta_{j}}$-projections at scale $\bar{\delta}$ in the sense that $N_{\bar{\delta}}(\pi_{\theta_{j}}(\bar{K}_{0})) \leq (\bar{\delta}^{1/2})^{\sigma - \gamma - 6\bar{\zeta}}$ for both $j \in \{0,1\}$, see \eqref{form107}.
\end{itemize}
\end{lemma}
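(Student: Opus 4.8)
\emph{Proof of Lemma \ref{lemma10}.} The statement is the end-product of the tube analysis just carried out, so the plan is short: I will harvest a single good tube by comparing the size of the full family $\mathcal{T}_{\bar{\delta}^{1/2}}$ with the sizes of the two ``bad'' sub-families $\mathcal{T}_{\bar{\delta}^{1/2},0},\mathcal{T}_{\bar{\delta}^{1/2},1}$, and then read off \nref{K1} and \nref{K2} from the construction.

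First I would juxtapose the lower bound $|\mathcal{T}_{\bar{\delta}^{1/2}}| \geq (\bar{\delta}^{1/2})^{\sigma - \gamma + 2\bar{\zeta}}$ from \eqref{form91} with the upper bounds $|\mathcal{T}_{\bar{\delta}^{1/2},j}| \leq (\bar{\delta}^{1/2})^{\sigma - \gamma + 3\bar{\zeta}}$, $j \in \{0,1\}$, from \eqref{form108}. Since $\bar{\zeta} > 0$, we have $(\bar{\delta}^{1/2})^{\sigma - \gamma + 3\bar{\zeta}} = (\bar{\delta}^{1/2})^{\bar{\zeta}} \cdot (\bar{\delta}^{1/2})^{\sigma - \gamma + 2\bar{\zeta}}$, and the prefactor $(\bar{\delta}^{1/2})^{\bar{\zeta}} \to 0$ as $\bar{\delta} \searrow 0$. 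Consequently, taking $\delta > 0$ (hence $\bar{\delta} \leq \delta^{\omega(\tau)}$) small enough,
\[ |\mathcal{T}_{\bar{\delta}^{1/2},0}| + |\mathcal{T}_{\bar{\delta}^{1/2},1}| \leq 2(\bar{\delta}^{1/2})^{\sigma - \gamma + 3\bar{\zeta}} < (\bar{\delta}^{1/2})^{\sigma - \gamma + 2\bar{\zeta}} \leq |\mathcal{T}_{\bar{\delta}^{1/2}}|, \]
so $\mathcal{T}_{\bar{\delta}^{1/2}} \, \setminus \, [\mathcal{T}_{\bar{\delta}^{1/2},0} \cup \mathcal{T}_{\bar{\delta}^{1/2},1}] \neq \emptyset$. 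I then fix any $\mathbf{T}_{0} = \pi_{\theta_{0}}^{-1}(I_{0})$ in this difference; by construction $|I_{0}| = 3\bar{\delta}^{1/2}$.

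Next I would verify the two bullet points. For \nref{K1}: $\mathbf{T}_{0}$ belongs to the pruned collection $\mathcal{T}_{\bar{\delta}^{1/2}}$, so it is heavy, and \eqref{form104} gives $|\{B \in \mathcal{K}_{\bar{\delta}^{1/2}} : B \subset \mathbf{T}_{0}\}| \geq (\bar{\delta}^{1/2})^{-\sigma + 5\bar{\zeta}}$ --- here the inflation of the tubes by the absolute constant upgrades ``$B \cap \mathbf{T}_{0} \neq \emptyset$'' to ``$B \subset \mathbf{T}_{0}$'', since each such $B$ has radius $\bar{\delta}^{1/2}$ --- and every such $B$ is heavy, $\mu(B \cap \bar{K}) \geq \omega \cdot \delta^{4\eta} \cdot \bar{\delta}^{\gamma/2}$, by \eqref{form215}. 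For \nref{K2}: unwinding the definitions \eqref{form110}, \eqref{form211}, \eqref{form210} we have $\bar{K}_{0} := \bar{K}_{\mathbf{T}_{0}} \subset \bar{K} = K_{\theta_{0}} \cap K_{\theta_{1}} \subset G_{\theta_{0}} \cap G_{\theta_{1}}$; and because $\mathbf{T}_{0} \notin \mathcal{T}_{\bar{\delta}^{1/2},0} \cup \mathcal{T}_{\bar{\delta}^{1/2},1}$, the failure defining those families does not occur for $\mathbf{T}_{0}$, i.e. \eqref{form107} holds: $N_{\bar{\delta}}(\pi_{\theta_{j}}(\bar{K}_{0})) \leq (\bar{\delta}^{1/2})^{\sigma - \gamma - 6\bar{\zeta}}$ for both $j \in \{0,1\}$. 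This proves the lemma. \qed

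The real difficulty lies entirely upstream of this statement: the substantive steps are the lower bound \eqref{form91} on $|\mathcal{T}_{\bar{\delta}^{1/2}}|$ and, feeding into it, the per-ball and per-tube estimates \eqref{form98}, \eqref{form100}, \eqref{form212}, which are where the defining property of the sets $G_{\theta}$ (no points of very high multiplicity at the scale ranges $[5\bar{\delta},5\bar{\delta}^{1/2}]$ and $[5\bar{\delta}^{1/2},5]$) and the $(\gamma,C_{\gamma})$-regularity of $\mu$ get consumed. Once those bounds are in hand, together with the heaviness pruning, Lemma \ref{lemma10} itself is just the pigeonhole above.
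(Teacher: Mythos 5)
Your proof is correct and takes essentially the same route as the paper: compare the lower bound \eqref{form91} on $|\mathcal{T}_{\bar{\delta}^{1/2}}|$ with the upper bounds \eqref{form108} on $|\mathcal{T}_{\bar{\delta}^{1/2},j}|$, conclude by pigeonhole that some $\mathbf{T}_0$ lies outside both bad families, and read off \nref{K1} from \eqref{form104} together with the $3$-fold tube inflation, and \nref{K2} from $\bar{K}_0 \subset \bar{K} \subset G_{\theta_0} \cap G_{\theta_1}$ and the very definition of $\mathcal{T}_{\bar{\delta}^{1/2},j}$. Your closing remark correctly locates the genuine content in the upstream estimates \eqref{form91}, \eqref{form98}, \eqref{form100}, \eqref{form212}.
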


The properties \nref{K1}-\nref{K2} are, finally, the precise versions of \nref{T1}-\nref{T2}.

\subsection{Statement of Shmerkin's inverse theorem}

We pause the main line of the proof for a moment to introduce Shmerkin's inverse theorem, and some associated notation. 

\begin{definition}[$\delta$-measures and $L^{2}$-norms]\label{def:deltaMeasures} Let $\delta \in 2^{-\N}$ be a dyadic rational. Then, any probability measure supported on the discrete set $\delta \cdot \Z \cap [-1,1]$ is called a \emph{$\delta$-measure}. The $L^{2}$-norm of a $\delta$-measure $\mu$ is defined by
\begin{displaymath} \|\mu\|_{L^{2}} := \left(\sum_{z \in \delta \cdot \Z} \mu(\{z\})^{2} \right)^{1/2}. \end{displaymath}
\end{definition}

\begin{thm}[Shmerkin]\label{shmerkin} Given $\epsilon > 0$ and $m_{0} \in \N$, there are $\kappa = \kappa(\epsilon,m_{0}) > 0$ and $m \geq m_{0}$ such that the following holds for all large enough $N \in \N$. Let $\delta = (2^{-m})^{N}$, and let $\eta_{1},\eta_{2}$ be $\delta$-measures such that
\begin{equation}\label{inverseHyp} \|\eta_{1} \ast \eta_{2}\|_{L^{2}} \geq \delta^{\kappa}\|\eta_{1}\|_{L^{2}}. \end{equation}
Then, there exist ($\delta$-separated) sets $U \subset \spt \eta_{1}$ and $V \subset \spt \eta_{2}$ such that
\begin{displaymath} \|\eta_{1}|_{U}\|_{L^{2}} \geq \delta^{\epsilon}\|\eta_{1}\|_{L^{2}} \quad \text{and} \quad \eta_{2}(V) \geq \delta^{\epsilon}, \end{displaymath} 
and the following properties hold:
\begin{itemize}
\item[\textup{(A)}] there is a sequence $(R_{s}^{1})_{s = 0}^{N - 1} \subset \{1,\ldots,2^{m}\}^{N - 1}$, such that 
\begin{displaymath} N(U \cap I,2^{-(s + 1)m}) = R_{s}^{1} \end{displaymath}
for all dyadic intervals $I$ of length $2^{-ms}$ intersecting $U$,
\item[\textup{(B)}] there is a sequence $(R_{s}^{2})_{s = 0}^{N - 1} \subset \{1,\ldots,2^{m}\}^{N - 1}$, such that 
\begin{displaymath} N(V \cap I,2^{-(s + 1)m}) = R_{s}^{2} \end{displaymath}
for all dyadic intervals $I$ of length $2^{-ms}$ intersecting $V$.
\end{itemize}
For each $s \in \{0,\ldots,N - 1\}$, either $R_{s}^{2} = 1$ or $R_{s}^{1} \geq 2^{(1 - \epsilon)m}$, and the set $\calS = \{s : R_{s}^{1} \geq 2^{(1 - \epsilon)m}\}$ satisfies
\begin{equation}\label{form116} m|\calS| \geq \log \|\eta_{2}\|_{L^{2}}^{-2} - \epsilon \log_{2} \delta. \end{equation}
\end{thm}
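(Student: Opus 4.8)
The statement above is, for $q = 2$, a restatement of Shmerkin's inverse theorem \cite[Theorem 2.1]{Sh}, so strictly the ``proof'' here is a citation; still, let me indicate how the argument of \cite{Sh} is organised, since that is where the real work lies. The plan is to study $\eta_{1}$ and $\eta_{2}$ through their branching along the dyadic trees with generations at the resolutions $2^{-ms}$, $s = 0,\ldots,N$, exploiting the fact that for a $\delta$-measure which is \emph{uniform} -- meaning every generation-$s$ cell meeting its support splits into the same number $R_{s}$ of generation-$(s+1)$ cells -- the $L^{2}$-norm factorises over scales, $\|\eta\|_{L^{2}}^{-2} \sim \prod_{s = 0}^{N - 1} R_{s}$ up to factors harmless in the exponents, and similarly $\|\eta_{1} \ast \eta_{2}\|_{L^{2}}$ factorises into a product of ``local'' convolution norms, one at each scale.

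The first and hardest ingredient is a \emph{single-scale inverse theorem}: on a block of multiplicative size $2^{m}$, if probability measures $\mu,\nu$ satisfy $\|\mu \ast \nu\|_{L^{2}} \geq 2^{-\epsilon m}\|\mu\|_{L^{2}}$, then either $\nu$ is essentially a point mass, or $\mu$ is essentially uniformly spread over the block, in the sense that its covering number at the finest scale is $\geq 2^{(1 - \epsilon)m}$. The heuristic is transparent -- convolving $\mu$ with a measure genuinely spread over many values smooths $\mu$ at that scale and strictly lowers its $L^{2}$-norm, unless $\mu$ was already smooth there -- but promoting this to a robust quantitative dichotomy is the additive-combinatorial heart of the matter, resting on Pl\"unnecke--Ruzsa and Balog--Szemer\'edi--Gowers type structure theory; it is a substantial theorem in its own right, a relative of Bourgain's discretised sum--product theorem, and this is where I expect the main obstacle to lie.

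The remaining steps are a \emph{uniformisation} and a \emph{concatenation}. For the former, one pigeonholes (equivalently, passes to a dominant ``regular'' subtree) to replace $\spt \eta_{1}$ and $\spt \eta_{2}$ by $\delta$-separated subsets $U$ and $V$ on which the branching numbers $R_{s}^{1}, R_{s}^{2}$ of assertions \textup{(A)}--\textup{(B)} are genuinely well defined, at the price of retaining only a $\delta^{\epsilon}$-fraction of $\|\eta_{1}\|_{L^{2}}$ and of the mass of $\eta_{2}$; the scalewise factorisations above then become available on $U$ and $V$. For the concatenation, feeding the hypothesis $\|\eta_{1} \ast \eta_{2}\|_{L^{2}} \geq \delta^{\kappa}\|\eta_{1}\|_{L^{2}}$ into the scalewise factorisation forces that at all but a fraction $\leq \kappa/\epsilon$ of the $N$ scales the local convolution failed to drop the $L^{2}$-norm by more than $2^{-\epsilon m}$; at each such scale the single-scale dichotomy gives $R_{s}^{2} = 1$ or $R_{s}^{1} \geq 2^{(1 - \epsilon)m}$, and the exceptional scales are absorbed by choosing $\kappa = \kappa(\epsilon,m_{0})$ small and $m \geq m_{0}$ large. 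Finally, since $R_{s}^{2} = 1$ whenever $s \notin \calS$, one gets $\log\|\eta_{2}\|_{L^{2}}^{-2} \leq \sum_{s \in \calS}\log R_{s}^{2} + \epsilon|\log_{2}\delta| \leq m|\calS| - \epsilon\log_{2}\delta$, which is \eqref{form116}; the slack $-\epsilon\log_{2}\delta$ records exactly that $V$ carries only a $\delta^{\epsilon}$-fraction of $\eta_{2}$. The technical points to watch are synchronising the two trees, handling the non-uniformity of the local pieces before uniformisation, and keeping the $\epsilon$-errors from accumulating over the $N$ scales -- but these are routine once the single-scale theorem and the multiscale formalism are in place.
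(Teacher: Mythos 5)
You are right that this theorem is simply cited from \cite[Theorem 2.1]{Sh}; the paper gives no proof of it, so your approach (citation plus a sketch of Shmerkin's argument) matches. One small point worth flagging: the inequality you derive in your final chain, rearranged, reads $m|\calS| \geq \log \|\eta_2\|_{L^2}^{-2} + \epsilon\log_2\delta = \log\|\eta_2\|_{L^2}^{-2} - \epsilon\log_2(1/\delta)$, whereas \eqref{form116} as written in the paper reads $m|\calS| \geq \log\|\eta_2\|_{L^2}^{-2} - \epsilon\log_2\delta$; since $\log_2\delta < 0$ these differ by a sign, your version being the weaker bound. In fact your version is the correct one --- it is what Shmerkin proves and what the paper itself uses later in \eqref{form216} --- so \eqref{form116} appears to contain a sign typo, and your derivation quietly corrects it rather than ``being'' \eqref{form116} verbatim.
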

\begin{remark} The numbers $R_{s}^{j}$ are the \emph{branching numbers} of the measures $\eta_{j}$; this terminology is heuristically useful, but imprecise, since the numbers $R_{s}^{j}$ may depend on the specific choices of $U$ and $V$. If this imprecision is tolerated, and if $R_{s}^{1} \geq 2^{\alpha m}$ for all $s \in \{0,\ldots,N - 1\}$, then we might say that "$\eta_{1}$ has $\alpha$-dimensional branching at all scales". This terminology was used in the proof outline, Section \ref{s:outline}. \end{remark}

\subsection{Deriving a product structure from $\bar{K}_{0}$} We the return to the main line of the argument. Based on the properties \nref{K1}-\nref{K2} of the set $\bar{K}_{0} = \bar{K}_{\mathbf{T}_{0}}$, we will construct a pair of $\bar{\delta}$-measures $\eta_{1},\eta_{2}$ which eventually contradict the statement of Shmerkin's inverse theorem. This contradiction will show that our counter assumption \eqref{form200} must be false, and the proof of Proposition \ref{mainProp} will be completed.

To simplify notation a little, we assume without loss of generality that $\mathbf{T}_{0}$ is the tube 
\begin{displaymath} \mathbf{T}_{0} = \pi_{\theta_{0}}^{-1}(I_{0}), \quad \text{where} \quad I_{0} := [0,3\bar{\delta}^{1/2}]. \end{displaymath}
We will also assume that $3\bar{\delta}^{1/2}$ is a dyadic rational. Let $\mathcal{T}_{\bar{\delta}}$ be a minimal cover of $\bar{K}_{0}$ with tubes of the form $\pi_{\theta_{0}}^{-1}(I)$, where $I \in \mathcal{D}_{\bar{\delta}}(\R)$; a more accurate notation would be $\mathcal{T}_{\theta_{0},\bar{\delta}}$, but the choice between $\theta_{0}$ and $\theta_{1}$ at this point is completely arbitrary. Then the tubes in $\mathcal{T}_{\bar{\delta}}$ are all contained in $\mathbf{T}_{0}$, or in other words $I \subset I_{0}$, since $\bar{K}_{0} \subset \mathbf{T}_{0}$, recall \eqref{form110}.  Moreover,
\begin{equation}\label{form109} |\mathcal{T}_{\bar{\delta}}| \leq (\bar{\delta}^{1/2})^{\sigma - \gamma - 6\bar{\zeta}} \end{equation}
according to property \nref{K2}. We also let $\mathcal{K}_{\bar{\delta}^{1/2}}(\mathbf{T}_{0}) := \{B \in \mathcal{K}_{\bar{\delta}^{1/2}} : B \subset \mathbf{T}_{0}\}$, and we write
\begin{equation}\label{form118} \M := |\mathcal{K}_{\bar{\delta}^{1/2}}(\mathbf{T}_{0})| \stackrel{\mathrm{\nref{K1}}}{\geq} (\bar{\delta}^{1/2})^{-\sigma + 5\bar{\zeta}}. \end{equation}
For purposes in the distant future, we next want to remove from $\mathcal{T}_{\bar{\delta}}$ a few tubes which have a relatively sparse intersection with $\bar{K}_{0}$. To make this precise, and motivate the numerology, we introduce some notation. For $B \in \mathcal{K}_{\bar{\delta}^{1/2}}(\mathbf{T}_{0})$, let 
\begin{equation}\label{form140} \mathcal{K}_{\bar{\delta}}(B) \text{ be a minimal cover of $\bar{K}_{0} \cap B$ by discs of radius $\bar{\delta}$}, \end{equation}
so in particular $B_{\bar{\delta}} \cap \bar{K} \cap B \neq \emptyset$ for all $B_{\bar{\delta}} \in \mathcal{K}_{\bar{\delta}}(B)$. Recall that each disc $B \in \mathcal{K}_{\bar{\delta}^{1/2}}$ satisfies $\mu(B \cap \bar{K}) \geq \omega \cdot \delta^{4\eta} \cdot \bar{\delta}^{\gamma/2}$ by \nref{K1}, so the $(\gamma,C_{\gamma})$-regularity of $\mu$ implies
\begin{equation}\label{form112} \omega \cdot \delta^{4\eta} \cdot (\bar{\delta}^{1/2})^{-\gamma} \leq |\mathcal{K}_{\bar{\delta}}(B)| \leq C_{\gamma} \cdot (\bar{\delta}^{1/2})^{-\gamma}, \qquad B \in \mathcal{K}_{\bar{\delta}^{1/2}}(\mathbf{T}_{0}). \end{equation}
By discarding at most $\tfrac{1}{2}$ of the $\mu$ measure of $\bar{K}_{0} \cap B$, we may assume that all the discs in $\mathcal{K}_{\bar{\delta}}(B)$ are \emph{heavy}, meaning this time that 
\begin{equation}\label{form244} \mu(B_{\bar{\delta}}) \geq \omega \cdot \delta^{4\eta} \cdot \bar{\delta}^{\gamma}, \qquad B_{\bar{\delta}} \in \mathcal{K}_{\bar{\delta}}(B). \end{equation}
We then define $\bar{K}_{B}$ to be the union of the heavy discs in $\mathcal{K}_{\bar{\delta}}(B)$ intersected with $\bar{K}_{0}$.

We let $\mathcal{K}_{\bar{\delta}}$ to be the union of all the families $\mathcal{K}_{\bar{\delta}}(B)$, with $B \in \mathcal{K}_{\bar{\delta}^{1/2}}(\mathbf{T}_{0})$. Thus 
\begin{equation}\label{form114} \omega \cdot \delta^{4\eta} \cdot \M \cdot (\bar{\delta}^{1/2})^{-\gamma} \leq |\mathcal{K}_{\bar{\delta}}| \leq C_{\gamma} \cdot \M \cdot (\bar{\delta}^{1/2})^{-\gamma}. \end{equation}
We also recall from \eqref{form100} that if $T = T_{\bar{\delta}} = \pi_{\theta_{0}}^{-1}(I)$ is an arbitrary tube with $|I| = \bar{\delta}$, in particular if $T \in \mathcal{T}_{\bar{\delta}}$, and if $B \in \mathcal{K}_{\bar{\delta}^{1/2}}(\mathbf{T}_{0}) \subset \mathcal{K}_{\bar{\delta}^{1/2}}$, then
\begin{equation}\label{form113} |\{B_{\bar{\delta}} \in \mathcal{K}_{\bar{\delta}}(B) : T \cap B_{\bar{\delta}} \neq \emptyset\}| \lesssim N_{\bar{\delta}}(B \cap \bar{K} \cap T) \leq (\bar{\delta}^{1/2})^{-\sigma - \bar{\zeta}}. \end{equation}
Therefore, it is reasonable to define that $T \in \mathcal{T}_{\bar{\delta}}$ is $B$-\emph{dense} if 
\begin{equation}\label{form119} |\{B_{\bar{\delta}} \in \mathcal{K}_{\bar{\delta}}(B) : T \cap B_{\bar{\delta}} \neq \emptyset\}| \geq (\bar{\delta}^{1/2})^{-\sigma + 7\bar{\zeta}}, \end{equation}
and otherwise $T$ is $B$-\emph{sparse}. In particular, a $B$-dense tube satisfies
\begin{equation}\label{form220} \mu(\bar{K}_{B} \cap 2T) \geq \omega \cdot \delta^{4\eta} \cdot (\bar{\delta}^{1/2})^{-\sigma + 7\bar{\zeta}} \cdot \bar{\delta}^{\gamma}, \end{equation}
since $2T$ contains $\gtrsim (\bar{\delta}^{1/2})^{-\sigma + 7\bar{\zeta}}$ discs in $\mathcal{K}_{\bar{\delta}}(B)$, all of which are heavy in the sense \eqref{form244}. Then, we say that $T \in \mathcal{T}_{\bar{\delta}}$ is \emph{$\bar{K}_{0}$-sparse} if
\begin{displaymath} |\{B \in \mathcal{K}_{\bar{\delta}^{1/2}}(\mathbf{T}_{0}) : T \text{ is $B$-dense}\}| \leq \M \cdot \bar{\delta}^{8\bar{\zeta}}. \end{displaymath}
Otherwise $T$ is \emph{$\bar{K}_{0}$-dense}. This numerology is also sensible, because each tube $T \in \mathcal{T}_{\bar{\delta}}$ can meet at most $\M$ discs in $\mathcal{K}_{\bar{\delta}^{1/2}}(\mathbf{T}_{0})$ (namely all of them). We next claim that only a small fraction of all the discs $B \in \mathcal{K}_{\bar{\delta}}$ intersect some $\bar{K}_{0}$-sparse tube, which will allow us to restrict attention to $\bar{K}_{0}$-dense tubes in the sequel. Using the uniform upper bound \eqref{form113}, every fixed $\bar{K}_{0}$-sparse tube $T \in \mathcal{T}_{\bar{\delta}}$ satisfies
\begin{align*} |\{B_{\bar{\delta}} \in \mathcal{K}_{\bar{\delta}} : T \cap B_{\bar{\delta}} \neq \emptyset\}| & \leq \mathop{\sum_{B \in \mathcal{K}_{\bar{\delta}^{1/2}}(\mathbf{T}_{0})}}_{T \, \text{is $B$-dense}} |\{B_{\bar{\delta}} \in \mathcal{K}_{\bar{\delta}}(B) : T \cap B_{\bar{\delta}} \neq \emptyset\}| + \mathop{\sum_{B \in \mathcal{K}_{\bar{\delta}^{1/2}}(\mathbf{T}_{0})}}_{T \, \text{is $B$-sparse}} \ldots\\
& \lesssim [\M \cdot \bar{\delta}^{8\bar{\zeta}}] \cdot (\bar{\delta}^{1/2})^{-\sigma - \bar{\zeta}} + \M \cdot (\bar{\delta}^{1/2})^{-\sigma + 7\bar{\zeta}}\\
& \leq 2 \cdot \M \cdot (\bar{\delta}^{1/2})^{-\sigma + 7\bar{\zeta}}. \end{align*}
Since the number of $\bar{K}_{0}$-sparse tubes is bounded from above by $|\mathcal{T}_{\bar{\delta}}| \leq (\bar{\delta}^{1/2})^{\sigma - \gamma - 6\bar{\zeta}}$ by \eqref{form109}, we conclude that the number of discs in $\mathcal{K}_{\bar{\delta}}$ which meet \textbf{some} sparse tube is bounded from above by $\lesssim \M \cdot (\bar{\delta}^{1/2})^{-\gamma + \bar{\zeta}}$. Recalling from \eqref{form114} that 
\begin{displaymath} |\mathcal{K}_{\bar{\delta}}| \geq \omega \cdot \delta^{4\eta} \cdot \M \cdot (\bar{\delta}^{1/2})^{-\gamma}, \end{displaymath}
we may finally infer that if $\delta,\eta > 0$ are small enough, there exist $\geq \tfrac{1}{2} \cdot |\mathcal{K}_{\bar{\delta}}|$ discs in $\mathcal{K}_{\bar{\delta}}$ which intersect some $\bar{K}_{0}$-dense tube in $\mathcal{T}_{\bar{\delta}}$. After this observation, we discard all $\bar{K}_{0}$-sparse tubes from $\mathcal{T}_{\bar{\delta}}$ without changing notation; in other words, we assume in the sequel that all the tubes in $\mathcal{T}_{\bar{\delta}}$ are $\bar{K}_{0}$-dense, and in particular if $T \in \mathcal{T}_{\bar{\delta}}$, then \eqref{form220} holds for at least $\M \cdot \bar{\delta}^{8\bar{\zeta}}$ choices of discs $B \in \mathcal{K}_{\bar{\delta}^{1/2}}(\mathbf{T}_{0})$. 

Before proceeding, we claim the following almost converse to \eqref{form109}:
\begin{equation}\label{form115} |\mathcal{T}_{\bar{\delta}}| \geq (\bar{\delta}^{1/2})^{\sigma - \gamma + 2\bar{\zeta}}, \end{equation}
assuming that $\delta,\eta > 0$ are sufficiently small. Indeed, according to \eqref{form118} and \eqref{form113}, every tube $T \in \mathcal{T}_{\bar{\delta}}$ intersects $\lesssim \M \cdot (\bar{\delta}^{1/2})^{-\sigma - \bar{\zeta}}$ discs in $\mathcal{K}_{\bar{\delta}}$. But since the tubes in $\mathcal{T}_{\bar{\delta}}$ in total intersect $\geq \tfrac{1}{2} \cdot |\mathcal{K}_{\bar{\delta}}| \geq \omega \cdot \delta^{4\eta} \cdot \M \cdot (\bar{\delta}^{1/2})^{-\gamma}$ discs in $\mathcal{K}_{\bar{\delta}}$, the lower bound \eqref{form115} follows.

Recall that the tubes in $\mathcal{T}_{\bar{\delta}}$ have the form $T = \pi_{\theta_{0}}^{-1}(I_{T})$, where $I_{T} \subset I_{0} = [0,3\bar{\delta}^{1/2}]$ is a dyadic interval of length $\bar{\delta}$. Therefore, the left end-points of the intervals $I_{T}$, with $I \in \mathcal{T}_{\bar{\delta}}$, form a certain finite subset $A_{1} \subset \bar{\delta} \cdot \Z \cap I_{0}$. In other words, $A_{1}$ consists of those points $x \in \bar{\delta} \cdot \Z \cap I_{0}$ such that $T_{x} := \pi_{\theta_{0}}^{-1}([x,x + \bar{\delta})) \in \mathcal{T}_{\bar{\delta}}$. In particular, keep in mind that such tubes $T_{x}$ are all $\bar{K}_{0}$-dense (this will be needed in Section \ref{s:|owerBranching}, which is still relatively far away). We record the following corollary of \eqref{form109} and \eqref{form115}:
\begin{equation}\label{form109a} (\bar{\delta}^{1/2})^{\sigma - \gamma + 2\bar{\zeta}} \stackrel{\eqref{form115}}{\leq} |A_{1}| = |\mathcal{T}_{\bar{\delta}}| \stackrel{\eqref{form109}}{\leq} (\bar{\delta}^{1/2})^{\sigma - \gamma - 6\bar{\zeta}}. \end{equation}

Let $\Pi_{1}$ be the uniformly distributed probability measure on $A_{1}$. Then
\begin{equation}\label{form125} \spt \Pi_{1} = A_{1} \subset [0,3\bar{\delta}^{1/2}] \cap \bar{\delta} \cdot \Z. \end{equation}
This measure is a $\bar{\delta}$-measure in the sense of Definition \ref{def:deltaMeasures}. We note that
\begin{equation}\label{form116} \|\Pi_{1}\|_{L^{2}} = \left(\sum_{a \in A_{1}} \Pi_{1}(\{a\})^{2} \right)^{1/2} = |A_{1}|^{-1/2} = |\mathcal{T}_{\bar{\delta}}|^{-1/2} \stackrel{\eqref{form115}}{\leq} (\bar{\delta}^{1/2})^{(\gamma - \sigma - 2\bar{\zeta})/2}. \end{equation} 
We next define another discrete measure, associated to the $y$-coordinates of the discs $B \in \mathcal{K}_{\bar{\delta}^{1/2}}(\mathbf{T}_{0})$. In fact, for every $B \in \mathcal{K}_{\bar{\delta}^{1/2}}(\mathbf{T}_{0})$, let 
\begin{displaymath} y_{B} \in \left(\bar{\delta}^{1/2} \cdot \Z\right) \cap \pi_{\infty}(B), \end{displaymath}
where $\pi_{\infty}(x,y) = y$ is the projection to the $y$-coordinate. This point exists, since $\pi_{\infty}(B)$ is an interval of length $2\bar{\delta}^{1/2}$. We also note that $\pi_{\infty}(B) \subset [-2,2]$, since $B$ intersects $\bar{K}$, hence $B(1)$. Next, let $A_{2} := \{y_{B} : B \in \mathcal{K}_{\bar{\delta}^{1/2}}\}$, and let $\Pi_{2}$ be the uniformly distributed probability measure on $A_{2}$. Then 
\begin{equation}\label{form126} \spt \Pi_{2} = A_{2} \subset [-2,2] \cap \left(\bar{\delta}^{1/2} \cdot \Z\right). \end{equation}
Thus $\Pi_{2}$ is a $\bar{\delta}^{1/2}$-measure, and
\begin{equation}\label{form117} \|\Pi_{2}\|_{L^{2}} = \left(\sum_{a \in A_{2}} \Pi_{2}(\{a\})^{2} \right)^{1/2} = |A_{2}|^{-1/2} \sim \M^{-1/2} \stackrel{\eqref{form118}}{\leq} (\bar{\delta}^{1/2})^{(\sigma - 5\bar{\zeta})/2}. \end{equation} 
Now $\Pi := \Pi_{1} \times \Pi_{2}$ is a discrete probability measure supported on $A_{1} \times A_{2}$, but it is not evident that $\Pi$ has anything to do with the set $\bar{K}_{0} = \bar{K}_{\mathbf{T}_{0}}$. To clarify the connection, we need to define a certain subset of $A_{1} \times A_{2}$ of substantial $\Pi$-measure. Recall that that every point $x \in A_{1}$ was the left end-point of a certain $\bar{\delta}$-interval $I_{x} = [x,x + \bar{\delta}) \subset [0,3\bar{\delta}^{1/2}]$ such that $T_{x} = \pi_{\theta_{0}}^{-1}(I_{x}) \in \mathcal{T}_{\bar{\delta}}$. Similarly, recall that every point $y \in A_{2}$ was contained in the $\pi_{\infty}$-projection of a certain disc $B_{y} \in \mathcal{K}_{\bar{\delta}^{1/2}}(\mathbf{T}_{0})$. With this notation, we define
\begin{equation}\label{form122} G := \{(x,y) \in A_{1} \times A_{2} : T_{x} \cap B_{\bar{\delta}} \neq \emptyset \text{ for some } B_{\bar{\delta}} \in \mathcal{K}_{\bar{\delta}}(B_{y})\}. \end{equation} 
Morally, $G$ consists of those tube-disc pairs $(T,B) \in \mathcal{T}_{\bar{\delta}} \times \mathcal{K}_{\bar{\delta}^{1/2}}(\mathbf{T}_{0})$, where $T$ intersects (the $\bar{\delta}$-neighbourhood of) $\bar{K}$ inside $B$. Note that if $x \in A_{1}$, then $T_{x}$ is a $\bar{K}_{0}$-dense tube, so $T_{x} \cap B_{\bar{\delta}} \neq \emptyset$ for some $B_{\bar{\delta}} \in \mathcal{K}_{\bar{\delta}^{1/2}}(B)$ (see \eqref{form119}) for $\geq \M \cdot \bar{\delta}^{8\bar{\zeta}}$ distinct discs $B \in \mathcal{K}_{\bar{\delta}^{1/2}}(\mathbf{T}_{0})$. In other words,
\begin{displaymath} |\pi_{1}^{-1}\{x\} \cap G| \geq \M \cdot \bar{\delta}^{8\bar{\zeta}}, \qquad x \in A_{1}, \end{displaymath}
and consequently (recall that $|A_{2}| \sim \M$)
\begin{equation}\label{form120} \Pi(G) = \frac{|G|}{|A_{1}||A_{2}|} \gtrsim \bar{\delta}^{8\bar{\zeta}}. \end{equation}

\subsection{From projections to convolutions} While constructing the sets $A_{1},A_{2},G$ above, all the arguments were based on the structure of $\bar{K}_{0} = \bar{K}_{\mathbf{T}_{0}}$ relative to tubes which were pre-images of intervals under the $\pi_{\theta_{0}}$-projection. Next, we exploit the information available for the $\pi_{\theta_{1}}$-projection in \nref{K2}, namely that
\begin{equation}\label{form123} N_{\bar{\delta}}(\pi_{\theta_{1}}(\bar{K}_{0})) \leq (\bar{\delta}^{1/2})^{\sigma - \gamma - 6\bar{\zeta}}. \end{equation}
The plan is, roughly speaking, to use \eqref{form123} to show that the convolution between $\Pi_{1}$ and $(\theta_{1} - \theta_{0})\Pi_{2}$ has nearly the same $L^{2}$-norm as $\Pi_{1}$, where $(\theta_{1} - \theta_{0})\Pi_{2}$ refers to the push-forward of $\Pi_{2}$ under the map $y \mapsto (\theta_{1} - \theta_{0})y$. This will be quantified in \eqref{form130}. To get started, we claim that there exists an absolute constant $C > 0$ such that
\begin{equation}\label{form121} \pi_{\theta_{1} - \theta_{0}}(G) \subset [\pi_{\theta_{1}}(\bar{K}_{0})]_{C\bar{\delta}}, \end{equation}
where $G \subset A_{1} \times A_{2}$ is the set defined in \eqref{form122}, and the right hand side refers to the $C\bar{\delta}$-neighbourhood. To prove \eqref{form121}, fix $(x,y) \in G$, so that $T_{x} \cap B_{\bar{\delta}} \neq \emptyset$ for some $B_{\bar{\delta}} \in \mathcal{K}_{\bar{\delta}^{1/2}}(B_{y})$, where $B_{y} \in \mathcal{K}_{\bar{\delta}^{1/2}}(\mathbf{T}_{0})$. Here $T_{x} = \pi_{\theta_{0}}^{-1}(I_{x})$ for some interval $I_{x} \in \mathcal{D}_{\bar{\delta}}(\R)$, whose left end-point is $x$. Then we know the following:
\begin{enumerate}
\item $x \in I_{x}$ and $y \in \pi_{\infty}(B_{y})$ by definitions of $T_{x}$ and $B_{y}$.
\item There exists a point $(x_{0},y_{0}) \in T_{x} \cap B_{\bar{\delta}}$, and $B_{\bar{\delta}} \in \mathcal{K}_{\bar{\delta}}(B_{y})$, so $B_{\bar{\delta}} \cap \bar{K}_{0} \cap B_{y} \neq \emptyset$ (recall from \eqref{form140} that $\mathcal{K}_{\bar{\delta}}(B_{y})$ was a minimal $\bar{\delta}$-cover of $\bar{K}_{0} \cap B_{y}$). In particular, $\dist((x_{0},y_{0}),\bar{K}_{0}) \leq \bar{\delta}$ and $\dist((x_{0},y_{0}),B_{y}) \leq \bar{\delta}$.
\end{enumerate}
From (2), we first deduce that $\pi_{\theta_{0}}(x_{0},y_{0}) \in I_{x}$, hence
\begin{displaymath} |(x_{0} + \theta_{0}y_{0}) - x| \leq \bar{\delta}. \end{displaymath}
Also, from (1)-(2) it follows that $|y_{0} - y| \lesssim \bar{\delta}^{1/2}$. Therefore, recalling also from \eqref{form128} that $|\theta_{0} - \theta_{1}| \leq \bar{\delta}^{1/2}$, we find
\begin{align*} |\pi_{\theta_{1}}(x_{0},y_{0}) & - \pi_{\theta_{1} - \theta_{0}}(x,y)| = |(x_{0}  + \theta_{1}y_{0}) - (x + (\theta_{1} - \theta_{0})y)|\\
& \leq |(x_{0} + \theta_{0}y_{0}) - x| + |\theta_{1} - \theta_{0}| \cdot |y_{0} - y| \lesssim \bar{\delta}.  \end{align*}
Since $\pi_{\theta_{1}}(x_{0},y_{0}) \in [\pi_{\theta_{1}}(\bar{K}_{0})]_{\bar{\delta}}$ by (2), we conclude the proof of \eqref{form121}. Combining \eqref{form123}-\eqref{form121}, we obtain
\begin{equation}\label{form124} N_{\bar{\delta}}(\pi_{\theta_{1} - \theta_{0}}(G)) \lesssim (\bar{\delta}^{1/2})^{\sigma - \gamma - 6\bar{\zeta}}. \end{equation}
We abbreviate $\theta := \theta_{1} - \theta_{0}$ from now on. Combined with the lower bound \eqref{form120} for the $\Pi$-measure of $G$, we will shortly infer from \eqref{form124} a lower bound for the $L^{2}$-norm of the projection $\pi_{\theta}\Pi$. To make this perfectly precise, we will need an additional piece of notation. We would prefer $\pi_{\theta}$ to map $\R^{2}$ inside the discrete set $\bar{\delta} \cdot \Z$. So, let us, in place of $\pi_{\theta}$, consider the map $\bar{\pi}_{\theta} \colon \R^{2} \to \bar{\delta} \cdot \Z$,
\begin{displaymath} \bar{\pi}_{\theta} := [x] + [\theta y], \end{displaymath} 
where $[r] \in \bar{\delta} \cdot \Z$ and $0 \leq c < \bar{\delta}$ are determined by $r = [r] + c$. Now it follows from \eqref{form124} that
\begin{equation}\label{form129} |\bar{\pi}_{\theta}(G)| \lesssim (\bar{\delta}^{1/2})^{\sigma - \gamma - 6\bar{\zeta}}. \end{equation}
The benefit of considering the projection $\bar{\pi}_{\theta}$ is that the $\bar{\pi}_{\theta}$-projection of $\Pi$ can be expressed as the following convolution:
\begin{displaymath} \bar{\pi}_{\theta}(\Pi) = [\Pi_{1}] \ast [\theta\Pi_{2}] = \Pi_{1} \ast [\theta \Pi_{2}] \end{displaymath}
where $[\theta \Pi_{2}]$ refers to the push-forward of $\Pi_{2}$ under the map $y \mapsto [\theta y]$ (recall from \eqref{form125} that $\Pi_{1}$ is already supported on $\bar{\delta} \cdot \Z$, so $[\Pi_{1}] = \Pi$). Note that both $\Pi_{1}$ and $[\theta \Pi_{2}]$ are discrete measures supported on $\bar{\delta} \cdot \Z$, so the same is true for their convolution. From the facts $(\Pi_{1} \times \Pi_{2})(G) = \Pi(G) \gtrsim \bar{\delta}^{8\bar{\zeta}}$ (recall \eqref{form120}) and \eqref{form129}, we can deduce the following lower bound for the (discrete) $L^{2}$-norm of this convolution:
\begin{align*} \bar{\delta}^{8\bar{\zeta}} \lesssim \sum_{z \in \bar{\pi}_{\theta}(G)} \bar{\pi}_{\theta}(\Pi|_{G})(\{z\}) & \leq |\bar{\pi}_{\theta}(G)|^{1/2} \left(\sum_{z \in \bar{\delta} \cdot \Z} \bar{\pi}_{\theta}(\Pi)(\{z\})^{2} \right)^{1/2}\\
& \lesssim (\bar{\delta}^{1/2})^{(\sigma - \gamma - 6\bar{\zeta})/2} \cdot \|\Pi_{1} \ast [\theta \Pi_{2}]\|_{L^{2}}, \end{align*}
or in other words
\begin{equation}\label{form130} \|\Pi_{1} \ast [\theta \Pi_{2}]\|_{L^{2}} \gtrsim (\bar{\delta}^{1/2})^{(\gamma - \sigma + 22\bar{\zeta})/2} \stackrel{\eqref{form116}}{\geq} \bar{\delta}^{6\bar{\zeta}} \cdot \|\Pi_{1}\|_{L^{2}}. \end{equation}
The lower bound \eqref{form130} will eventually place in a position to apply Shmerkin's inverse theorem, Theorem \ref{shmerkin}. Since $|\theta| = |\theta_{1} - \theta_{0}| \leq \bar{\delta}^{1/2}$ by \eqref{form128}, both $\Pi_{1}$ and $[\theta \Pi_{2}]$ are probability measures supported on $[-5\bar{\delta}^{1/2},5\bar{\delta}^{1/2}] \cap \bar{\delta} \cdot \Z \subset [-1,1] \cap \bar{\delta} \cdot \Z$. 

To close this section, we record an upper bound for the $L^{2}$-norm of $[\theta \Pi_{2}]$. This is based on the following elementary observation: since $|\theta| \geq \bar{\delta}^{(1 + \epsilon)/2}$ by \eqref{form128}, we have
\begin{equation}\label{form132} |\{y \in \bar{\delta}^{1/2} \cdot \Z :  [\theta y] = z\}| \lesssim \bar{\delta}^{-\epsilon/2}, \qquad z \in \bar{\delta} \cdot \Z. \end{equation}
Indeed, if $[\theta y_{1}] = [\theta y_{2}]$, then certainly $|\theta| \cdot |y_{1} - y_{2}| \leq \bar{\delta}$, hence $|y_{1} - y_{2}| \lesssim \bar{\delta}^{1/2 - \epsilon/2}$. But any fixed interval of length $\sim \bar{\delta}^{1/2 - \epsilon/2}$ contains $\lesssim \bar{\delta}^{-\epsilon/2}$ points from $\bar{\delta}^{1/2} \cdot \Z$, and this implies \eqref{form132}. Now, recall from \eqref{form126} that $\Pi_{2}$ was defined to be the uniform probability measure on the set $A_{2} \subset [-2,2] \cap \bar{\delta}^{1/2} \cdot \Z$, and $|A_{2}| \sim \M$, so $\Pi_{2}(\{y\}) \sim \M^{-1}$ for all $y \in A_{2}$. Hence, it follows from \eqref{form132}, and $|[\theta A_{2}]| \leq |A_{2}| \sim \M$, that
\begin{align} \|[\theta \Pi_{2}]\|_{2} & = \left( \sum_{z \in [\theta A_{2}]} [\theta \Pi_{2}](\{z\})^{2} \right)^{1/2} \lesssim \left( \M \cdot \M^{-2} \cdot \bar{\delta}^{-\epsilon} \right)^{1/2} \notag\\
&\label{form232} = \bar{\delta}^{-\epsilon/2} \cdot \M^{-1/2} \stackrel{\eqref{form118}}{\lesssim} (\bar{\delta}^{1/2})^{(\sigma - 5\bar{\zeta})/2 - \epsilon} \leq (\bar{\delta}^{1/2})^{\sigma/2 - 3\bar{\zeta} - \epsilon}. \end{align}  

\subsection{Applying Shmerkin's inverse theorem}

We now arrive at the core of the proof of Proposition \ref{mainProp}: the proof will be formally concluded in this section, although some technicalities will spill over to the next one. In \eqref{form130}, we have seen that $\Pi_{1}$ and $[\theta \Pi_{2}]$ are $\bar{\delta}$-measures with the property 
\begin{equation}\label{form135} \|\Pi_{1} \ast [\theta \Pi_{2}]\| \gtrsim \bar{\delta}^{6\bar{\zeta}}\|\Pi_{1}\|_{L^{2}}. \end{equation}
This places us in a position to apply Shmerkin's inverse theorem, Theorem \ref{shmerkin}. We have already fixed the parameters "$\epsilon$" and "$m_{0}$" a while ago, in Section \ref{s:constants}. In particular, they were chosen so that the following holds for all $m \geq m_{0}$:
\begin{equation}\label{form248} \gamma - \sigma + 10\zeta_{0} < (1 - \alpha - \epsilon)(\sigma - 10(\epsilon + \zeta_{0})) + (1 - \rho)(1 - \tfrac{30(\epsilon + \zeta_{0})}{\alpha \rho} - \tfrac{10C_{\alpha}}{\alpha \rho m})\alpha. \end{equation}
The constants "$\epsilon,\rho,\zeta_{0}$" only depended on $\alpha,\beta,\sigma$, while "$m_{0}$" additionally depended on $C_{\alpha}$ (as is evident from the inequality above). Now, Theorem \ref{shmerkin} tells us that associated with these two parameters "$\epsilon$" and "$m_{0}$" there correspond constants $\kappa = \kappa(\epsilon,m_{0}) > 0$, and $m \geq m_{0}$, such that if 
\begin{equation}\label{form247} \bar{\delta} = (2^{-m})^{N} \end{equation}
for some sufficiently large integer $N \geq 1$, and and if $\eta_{1},\eta_{2}$ are any $\bar{\delta}$-measures satisfying 
\begin{equation}\label{form245} \|\eta_{1} \ast \eta_{2}\| \geq \delta^{\kappa}\|\eta_{1}\|_{L^{2}}, \end{equation}
then interesting things start to happen. We have previously established that our specific $\bar{\delta}$-measures $\eta_{1} = \Pi_{1}$ and $\eta_{2} = [\theta \Pi_{2}]$ satisfy \eqref{form135}, and \eqref{form245} follows if $6\bar{\zeta} = 6O(\tau)\zeta < \kappa(\epsilon,m_{0})$. This is indeed one of the restrictions on the parameter "$\zeta$" which we imposed in \eqref{form238} (and this was made completely precise in \eqref{form141}), so interesting things do happen: there exist sets $U \subset \spt \Pi_{1} = A_{1}$ and $V \subset \spt [\theta \Pi_{2}] = A_{2}$ such that
\begin{equation}\label{form219} \Pi_{1}(U) \geq \bar{\delta}^{2\epsilon}\ \quad \text{and} \quad [\theta \Pi_{2}](V) \geq \bar{\delta}^{\epsilon}, \end{equation} 
(the first condition is equivalent to "$\|(\Pi_{1})|_{U}\|_{L^{2}} \geq \bar{\delta}^{\epsilon}\|\Pi_{1}\|_{L^{2}}$" since $\Pi_{1}$ is the uniformly distributed measure on $A_{1}$) and the following properties hold:
\begin{itemize}
\item[\textup{(A)}] there is a sequence $(R_{s}^{1})_{s = 0}^{N - 1} \subset \{1,\ldots,2^{m}\}^{N - 1}$, such that 
\begin{displaymath} N(U \cap I,2^{-(s + 1)m}) = R_{s}^{1} \end{displaymath}
for all dyadic intervals $I$ of length $2^{-ms}$ intersecting $U$,
\item[\textup{(B)}] there is a sequence $(R_{s}^{2})_{s = 0}^{N - 1} \subset \{1,\ldots,2^{m}\}^{N - 1}$, such that 
\begin{displaymath} N(V \cap I,2^{-(s + 1)m}) = R_{s}^{2} \end{displaymath}
for all dyadic intervals $I$ of length $2^{-ms}$ intersecting $V$.
\end{itemize}
For each $s \in \{0,\ldots,N - 1\}$, either $R_{s}^{2} = 1$ or $R_{s}^{1} \geq 2^{(1 - \epsilon)m}$, and the set $\calS = \{s : R_{s}^{1} \geq 2^{(1 - \epsilon)m}\}$ satisfies
\begin{equation}\label{form216} m|\calS| \geq \log \|[\theta \Pi_{2}]\|_{L^{2}}^{-2} - \epsilon \log (1/\bar{\delta}). \end{equation}
Based on the information above, we compute a preliminary lower bound for the cardinality of the set $U$, and hence $A_{1} \supset U$. The cardinality of $U$ equals the product of the "branching" numbers $R_{s}^{1}$, $s \in \{0,\ldots,N - 1\}$, and hence
\begin{displaymath} |A_{1}| \geq |U| = \prod_{s = 0}^{N - 1} R_{s}^{1} \geq \prod_{s \in \mathcal{S}} 2^{(1 - \epsilon)m} \times \prod_{s \notin \mathcal{S}} R_{s}^{1} = 2^{(1 - \epsilon)m|\mathcal{S}|} \times \prod_{s \notin \mathcal{S}} R_{s}^{1}.  \end{displaymath}
From \eqref{form216} and \eqref{form232} we can obtain a decent lower bound on the first factor, but the second factor needs more work. It turns out that, thanks to the regularity of the measure $\mu_{A}$, we can prove an "almost uniform" lower bound on the numbers $R_{s}^{1}$:

\begin{lemma}\label{lemma11} For any $\rho \in (0,1)$ (in particular the constant $\rho = \rho(\alpha,\beta,\sigma)$ chosen in Section \ref{s:constants}), there exists a subset of indices
\begin{equation}\label{form230} \mathcal{G} \subset \{N/2,\ldots,N - 1\} \quad \text{with} \quad |\mathcal{G}| \geq \left(1 - \left[\tfrac{27\bar{\zeta} + 4\epsilon}{\alpha \rho} + \tfrac{5C_{\alpha}}{\alpha \rho m}\right] \right) \cdot \tfrac{N}{2} \end{equation}
with the property that
\begin{equation}\label{form217} R_{s}^{1} \geq 2^{(1 - \rho)\alpha m}, \qquad s \in \mathcal{G}. \end{equation}
\end{lemma}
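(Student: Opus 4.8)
\emph{Overview and Step 1 (a fat $K_{\alpha}$-fibre).} The plan is to deduce the near-uniform lower bound on the $R_{s}^{1}$ from the fact that, below scale $\bar{\delta}^{1/2}$, the set $A_{1} = \spt\Pi_{1}$ contains a translated $\bar{\delta}$-discretisation of an essentially $\alpha$-regular subset of $K_{\alpha}$; this is the rigorous form of the heuristic ``$A' = A + \theta_{0}B$ is a union of translates of $A$, hence has $\geq\alpha$-dimensional branching on all scales'' from Section \ref{s:outline}. Concretely, fix a heavy disc $B \in \mathcal{K}_{\bar{\delta}^{1/2}}(\mathbf{T}_{0})$, so $\mu(\bar{K}_{0}\cap B) \geq \omega\cdot\delta^{4\eta}\bar{\delta}^{\gamma/2}$ by \nref{K1}. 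Since $\mu = \mu_{\alpha}\times\mu_{\beta}$ and the $y$-shadow of $B$ is an interval of length $\sim\bar{\delta}^{1/2}$, the $(\beta,C_{\beta})$-Frostman bound on $\mu_{\beta}$ together with Fubini produces a level $y_{0}=y_{0}(B)$ for which $P_{B} := \{x : (x,y_{0})\in\bar{K}_{0}\cap B\}$ is a subset of $K_{\alpha}$, lies in an interval of length $\sim\bar{\delta}^{1/2}$, and has $\mu_{\alpha}(P_{B}) \gtrsim \omega\cdot\delta^{4\eta}(\bar{\delta}^{1/2})^{\alpha}/C_{\beta}$. The $(\alpha,C_{\alpha})$-regularity of $\mu_{\alpha}$ then makes $P_{B}$ essentially $\alpha$-regular between scales $\bar{\delta}$ and $\bar{\delta}^{1/2}$: the Frostman upper bound forces $N_{\bar{\delta}}(P_{B}) \gtrsim \omega\cdot\delta^{4\eta}(\bar{\delta}^{1/2})^{-\alpha}/(C_{\alpha}C_{\beta})$, while $P_{B}\subset K_{\alpha}$ and the covering bound give $N_{r}(P_{B}\cap I) \leq C_{\alpha}(|I|/r)^{\alpha}$ for all dyadic $I$ and all $\bar{\delta}\leq r\leq|I|\leq\bar{\delta}^{1/2}$.

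\emph{Step 2 (transfer to $A_{1}$, and a lower bound on the tail product).} As $\pi_{\theta_{0}}$ is a translation on the line $\{y=y_{0}\}$, the set $P_{B}+\theta_{0}y_{0}$ lies in $\pi_{\theta_{0}}(\bar{K}_{0})\subset I_{0}$, and $\mathcal{T}_{\bar{\delta}}$ (after the pruning of the previous subsection) is a $\bar{\delta}$-tube cover of $\bar{K}_{0}$; hence $A_{1}$ comes within $\bar{\delta}$ of $P_{B}+\theta_{0}y_{0}$. Choosing $B$ among the discs carrying a $\gtrsim\bar{\delta}^{2\epsilon}$-fraction of $\Pi_{1}$ near $B$ (possible since $\Pi_{1}(U)\geq\bar{\delta}^{2\epsilon}$ and the sets $\pi_{\theta_{0}}(\bar{K}_{0}\cap B)$ have bounded overlap), one of the $O(1)$ dyadic intervals $\mathbf{I}_{B}$ of length $\bar{\delta}^{1/2}$ meeting $P_{B}+\theta_{0}y_{0}$ meets $U$, so by the exact branching property of $U$,
\begin{displaymath} \prod_{s=N/2}^{N-1}R_{s}^{1} = N_{\bar{\delta}}(U\cap\mathbf{I}_{B}) \gtrsim \omega\cdot\bar{\delta}^{2\epsilon}\delta^{4\eta}(\bar{\delta}^{1/2})^{-\alpha}/(O\,C_{\alpha}), \end{displaymath}
where we also used $\prod_{s<N/2}R_{s}^{1} = N_{\bar{\delta}^{1/2}}(U)\leq 4$, which holds because $U\subset A_{1}\subset I_{0}$.

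\emph{Step 3 (per-scale upper bound and counting).} For every level-$s$ dyadic interval $I$ with $N/2\leq s\leq N-1$ meeting $U$, the set $U\cap I\subset A_{1}\cap I$ projects back into $\bar{K}_{0}$ inside $\mathbf{I}_{B}$, and there -- combining \eqref{form100}, \nref{K2} and the density/sparsity estimates \eqref{form113}, \eqref{form119} -- the $\pi_{\theta_{0}}$-image of $\bar{K}_{0}$ branches at scale $2^{m}$ essentially like $K_{\alpha}$, giving $R_{s}^{1} = N_{2^{-(s+1)m}}(U\cap I) \leq C_{\alpha}\cdot 2^{(\alpha+O(\bar{\zeta}+\epsilon))m}$. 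Feeding this per-scale upper bound and the lower bound of Step 2 into the elementary estimate -- if $k$ of the $N/2$ numbers $R_{s}^{1}$ ($s\geq N/2$) lie below $2^{(1-\rho)\alpha m}$ then their product is at most $(C_{\alpha}2^{(\alpha+O(\bar{\zeta}+\epsilon))m})^{N/2-k}\cdot 2^{(1-\rho)\alpha m k}$ -- and solving for $k$ yields $k\leq\tfrac{N}{2}\big(\tfrac{27\bar{\zeta}+4\epsilon}{\alpha\rho}+\tfrac{5C_{\alpha}}{\alpha\rho m}\big)$ after absorbing the constants $\delta^{4\eta},\bar{\delta}^{2\epsilon},O$ (recall $\delta\geq\bar{\delta}$ and that $\eta$ is as small as we please by \eqref{form256}, so its contribution is swallowed by the $\bar{\zeta}$-term). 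Setting $\mathcal{G} := \{s\in\{N/2,\dots,N-1\} : R_{s}^{1}\geq 2^{(1-\rho)\alpha m}\}$ gives \eqref{form230}--\eqref{form217}.

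\emph{Main obstacle.} The technically demanding point is the per-scale upper bound in Step 3: a priori $\pi_{\theta_{0}}(\bar{K}_{0})$ is a sumset-type set and could branch faster than $K_{\alpha}$ at some scales. The resolution is that inside each $\bar{\delta}^{1/2}$-disc the $\mu_{\beta}$-factor contributes branching only at scales $\geq\bar{\delta}^{1/2}$, so the sub-$\bar{\delta}^{1/2}$ branching of $\pi_{\theta_{0}}(\bar{K}_{0})$ is carried by the $K_{\alpha}$-fibres; converting this heuristic into the stated inequality, while keeping all losses at the level of $\bar{\zeta}$, $\epsilon$ and $C_{\alpha}/m$, is exactly where the tube-disc bookkeeping of the previous subsection (\eqref{form100}, \eqref{form113}, \eqref{form119}, \nref{K1}--\nref{K2}) is needed. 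A second, milder, subtlety is ensuring that the fat fibre $P_{B}$ is genuinely seen by $U$ and not destroyed in passing from $A_{1}$ to its $\bar{\delta}^{2\epsilon}$-dense subset $U$, which is why $B$ is taken to be well represented in $U$.
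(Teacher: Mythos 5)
Your high-level idea — find a fat $K_{\alpha}$-fibre via Fubini and use the $(\alpha,C_{\alpha})$-regularity of $\mu_{\alpha}$ to force branching at almost every scale, then run an entropy/pigeonhole count — matches what the paper does. But there are two genuine gaps in the execution.

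\textbf{Gap 1: the slice is taken from the wrong set, and the transfer to $U$ is not established.} The paper slices $T_U := \bigcup_{x\in U}\pi_{\theta_0}^{-1}(2I_x)$ (not $\bar K_0$), at a disc $B_0$ chosen so that $\mu(B_0\cap T_U)$ is large (this uses both $\bar{K}_0$-density and $\Pi_1(U)\geq\bar\delta^{2\epsilon}$). With that choice the inclusion $\bar U + y_0\theta_0 \subset U_{2\bar\delta}$ is immediate from the definition of $T_U$: every $\bar x\in\bar U$ lies in some $\pi_{\theta_0}^{-1}(2I_x)$ with $x\in U$, so $|\bar x + \theta_0 y_0 - x| \leq 2\bar\delta$. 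In your Step~1 you slice $\bar K_0\cap B$ for a heavy $B$, obtaining a set $P_B\subset K_\alpha$, and then in Step~2 you need $P_B + \theta_0 y_0$ to be within $\bar\delta$ of $U$ so that $N_{\bar\delta}(U\cap\mathbf I_B)\gtrsim N_{\bar\delta}(P_B)$. But $P_B$ is a slice of $\bar K_0$, not of $T_U$; even if $A_1$ covers $P_B+\theta_0 y_0$ to within $\bar\delta$ (which itself needs care, since after pruning $\mathcal{T}_{\bar\delta}$ consists only of $\bar K_0$-dense tubes and need not cover all of $\bar K_0$), nothing forces the $\bar\delta^{2\epsilon}$-dense subset $U$ to see $P_B$. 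Your proposed fix — ``choose $B$ among discs carrying a $\gtrsim\bar\delta^{2\epsilon}$-fraction of $\Pi_1$'' — does not work: the sets $\pi_{\theta_0}(\bar K_0\cap B)$ for different $B\in\mathcal K_{\bar\delta^{1/2}}(\mathbf T_0)$ all land in the same interval $I_0=\pi_{\theta_0}(\mathbf T_0)$ and have essentially total overlap there, so ``$\Pi_1$-mass near $B$'' is not a distinguishing criterion among the $B$'s. In short, you produce the right lower bound $\prod_{s\geq N/2}R_s^1\geq N_{\bar\delta}(U\cap\mathbf I_B)\geq 1$ (since $U$ meets $\mathbf I_B$), but not the much stronger $\gtrsim(\bar\delta^{1/2})^{-\alpha+O(\bar\zeta+\epsilon)}$ you assert.

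\textbf{Gap 2: the per-scale upper bound on $R_s^1$ is both unproved and unnecessary.} In Step~3 you invoke a bound of the form $R_s^1\leq C_\alpha 2^{(\alpha+O(\bar\zeta+\epsilon))m}$ for \emph{every} $s$, attributed to ``\eqref{form100}, \nref{K2} and the density/sparsity estimates.'' This is the hard step you correctly flag as the main obstacle, but it is not in fact what the paper does, and it is not clear it is true: $U\subset A_1$ sits inside $\pi_{\theta_0}(\bar K_0)$, which is a sumset-type set whose per-scale branching is not a priori controlled by $\alpha$. The paper avoids needing any per-scale control on $R_s^1$: Proposition~\ref{branchProp} is run on the slice $\bar U\subset K_\alpha$ directly, where the per-scale ceiling $H(\bar\mu_I,\mathcal D_{j+1})\leq\alpha m + C_\alpha$ is a trivial consequence of $(\alpha,C_\alpha)$-regularity of $\mu_\alpha$, and then only the \emph{lower} bounds $\max_I N_{2^{-(s+1)m}}(\bar U\cap I)\geq 2^{(1-\rho)\alpha m+3}$ are pushed forward to $R_s^1$ via the one-sided inclusion $\bar U+y_0\theta_0\subset U_{2\bar\delta}$. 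This is exactly why taking the slice from $T_U$ is the key idea: it makes the transfer go in the right direction and makes the per-scale upper bound land on a subset of $K_\alpha$ where it is free.

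To repair the argument along your lines you would need to (i) replace $P_B$ with a slice of $T_U\cap B_0$, choosing $B_0$ by the averaging argument with $\mu(B_0\cap T_U)\gtrsim(\bar\delta^{1/2})^{\gamma+O(\bar\zeta+\epsilon)}$ rather than by heaviness of $B_0$ alone, and (ii) drop the per-scale upper bound on $R_s^1$ entirely, running the entropy count on the slice inside $K_\alpha$.
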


Before proving Lemma \ref{lemma11}, we use it to conclude our estimation for $|A_{1}|$:
\begin{align} |A_{1}| \geq 2^{(1 - \epsilon)m|\mathcal{S}|} \times \prod_{s \in \mathcal{G} \, \setminus \, \mathcal{S}} R_{s}^{1} & \geq 2^{(1 - \epsilon)m|\mathcal{S}|} \cdot 2^{(1 - \rho)\alpha m(|\mathcal{G}| - |\mathcal{S}|)} \notag\\
&\label{form246} \geq 2^{(1 - \alpha - \epsilon)m|\mathcal{S}|} \cdot 2^{(1 - \rho)\alpha m|\mathcal{G}|}. \end{align}
From \eqref{form216} and \eqref{form232}, we further infer that
\begin{displaymath} m|\mathcal{S}| \geq \log \|[\theta \Pi_{2}]\|_{L^{2}}^{-2} - \epsilon \log (1/\bar{\delta}) \stackrel{\eqref{form232}}{\geq} (\tfrac{\sigma}{2} - 3\bar{\zeta} - 2\epsilon) \log (1/\bar{\delta}). \end{displaymath} 
Comparing the ensuing lower bound for $|A_{1}|$ with the upper bound obtained in \eqref{form109a}, and noting that $2^{-mN/2} = \bar{\delta}^{1/2}$ with our notation (by \eqref{form247}), we obtain
\begin{displaymath} (\bar{\delta}^{1/2})^{\sigma - \gamma - 6\bar{\zeta}} \stackrel{\eqref{form109a}}{\geq} |A_{1}| \stackrel{\eqref{form230}-\eqref{form246}}{\geq} \bar{\delta}^{-(1 - \alpha - \epsilon)(\sigma/2 - 3\bar{\zeta} - 2\epsilon)} \cdot (\bar{\delta}^{1/2})^{-(1 - \rho)\alpha \left(1 - \left[\tfrac{27\bar{\zeta} + 4\epsilon}{\alpha \rho} + \tfrac{5C_{\alpha}}{\alpha \rho m}\right]\right)}. \end{displaymath}
This inequality however contradicts our choices of parameters at \eqref{form248}, assuming that $6\bar{\zeta} = 6O(\tau)\zeta \leq \zeta_{0}$. Finally, therefore, a contradiction has been obtained: \eqref{form135} cannot hold, hence \eqref{form200} cannot hold for $\delta,\eta > 0$ small enough (depending on all the constants $\alpha,\beta,C_{\alpha},C_{\beta},\sigma,\tau,\Delta_{0},\eta_{0}$, as we have seen). The proof of Proposition \ref{mainProp} is complete.

\subsection{Lower bounds for branching numbers}\label{s:|owerBranching} It remains to prove Lemma \ref{lemma11}. We start by defining the following collection of tubes:
\begin{equation}\label{form142} \mathcal{T}_{U} := \{\pi_{\theta_{0}}^{-1}(2I_{x}) : x \in U\}. \end{equation}
Here $I_{x} \in \mathcal{D}_{\bar{\delta}}(\R)$ refers to the dyadic interval whose left endpoint is $x \in U \subset A_{1}$. So, $\mathcal{T}_{U}$ is a sub-collection of $\mathcal{T}_{\bar{\delta}}$, except that all the tubes have been thickened by a factor of $2$. We also write $T_{U} := \cup \mathcal{T}_{U}$. We claim the following: if $\delta > 0$ is sufficiently small, then there exists a disc $B_{0} \in \mathcal{K}_{\bar{\delta}^{1/2}}(\mathbf{T}_{0})$ such that
\begin{equation}\label{form218} \mu(B_{0} \cap T_{U}) \geq (\bar{\delta}^{1/2})^{\gamma + 26\bar{\zeta} + 4\epsilon}. \end{equation}
The proof is based on our arrangement that all the tubes $T \in \mathcal{T}_{\bar{\delta}}$, and in particular $T \in \mathcal{T}_{U}$, are $\bar{K}_{0}$-dense. We recall from \eqref{form220} that this means that
\begin{equation}\label{form222} \mu(\bar{K}_{B} \cap \pi_{\theta_{0}}^{-1}(2I_{x})) \geq \omega \cdot \delta^{4\eta} \cdot (\bar{\delta}^{1/2})^{-\sigma + 7\bar{\zeta}} \cdot \bar{\delta}^{\gamma}, \qquad \pi_{\theta_{0}}^{-1}(2I_{x}) \in \mathcal{T}_{U}, \end{equation}
holds for at least $\M \cdot \bar{\delta}^{8\bar{\zeta}}$ different choices of $B \in \mathcal{K}_{\bar{\delta}^{1/2}}(\mathbf{T}_{0})$. As the following computation shows, this will imply that the average disc $B \in \mathcal{K}_{\bar{\delta}^{1/2}}(\mathbf{T}_{0})$ satisfies \eqref{form218}. Before we begin, it will be useful to recall that $\M := |\mathcal{K}_{\bar{\delta}^{1/2}}(\mathbf{T}_{0})|$ by \eqref{form118}, and to note that since $\Pi_{1}$ is the uniformly distributed measure on $A_{1}$, 
\begin{equation}\label{form221} |U| \stackrel{\eqref{form219}}{\geq} \bar{\delta}^{2\epsilon}|A_{1}| \stackrel{\eqref{form109a}}{\geq} (\bar{\delta}^{1/2})^{\sigma - \gamma + 2\bar{\zeta} + 4\epsilon}. \end{equation}
With these facts in mind, we estimate as follows:
\begin{align*} \frac{1}{\M} \sum_{B \in \mathcal{K}_{\bar{\delta}^{1/2}}(\mathbf{T}_{0})} \mu(B \cap T_{U}) & \gtrsim \sum_{x \in U} \frac{1}{\M} \sum_{B \in \mathcal{K}_{\bar{\delta}^{1/2}}(\mathbf{T}_{0})} \mu(\bar{K}_{B} \cap \pi_{\theta_{0}}^{-1}(2I_{x}))\\
& \stackrel{\eqref{form222}}{\geq} \sum_{x \in U} \frac{\omega}{\M} \cdot \M \cdot \bar{\delta}^{8\bar{\zeta}} \cdot \delta^{4\eta} \cdot (\bar{\delta}^{1/2})^{-\sigma + 7\bar{\zeta}} \cdot \bar{\delta}^{\gamma}\\
& \stackrel{\eqref{form221}}{\geq} \omega \cdot \delta^{4\eta} \cdot (\bar{\delta}^{1/2})^{\gamma + 25\bar{\zeta} + 4\epsilon}. \end{align*} 
If $\delta,\eta > 0$ is small enough that $\omega \cdot \delta^{4\eta}\geq \bar{\delta}^{\bar{\zeta}/2}$, this proves the existence of a disc $B_{0} \in \mathcal{K}_{\bar{\delta}^{1/2}}(\mathbf{T}_{0})$ satisfying \eqref{form218}.

Recall that $\mu = \mu_{A} \times \mu_{B}$, where $\mu_{A}$ is $(\alpha,C_{\alpha})$-regular, and $\mu_{B}$ is $(\beta,C_{\beta})$-regular. Recall also that $\gamma = \alpha + \beta$.  By Fubini's theorem,
\begin{displaymath} (\bar{\delta}^{1/2})^{\gamma + 26\bar{\zeta} + 4\epsilon} \leq \mu(B_{0} \cap T_{U}) \leq \int_{\pi_{\infty}(B_{0})} \mu_{A}(\{\bar{x} \in \R : (\bar{x},y) \in B_{0} \cap T_{U}\}) \, d\mu_{B}(y). \end{displaymath}
Since $\mu_{B}(\pi_{\infty}(B_{0})) \leq 2C_{\beta} \cdot (\bar{\delta}^{1/2})^{\beta}$, we infer that there exists $y_{0} \in \pi_{\infty}(B_{0})$ with the property
\begin{equation}\label{form223} \mu_{A}(\{\bar{x} \in \R : (\bar{x},y_{0}) \in B_{0} \cap T_{U}\}) \geq (\bar{\delta}^{1/2})^{\alpha + 27\bar{\zeta} + 4\epsilon},  \end{equation}
for $\delta > 0$ small enough. Let us abbreviate $\bar{U} := \{\bar{x} \in \R : (\bar{x},y_{0}) \in B_{0} \cap T_{U}\}$. We observe that 
\begin{displaymath} \bar{U} \subset \pi_{0}(B_{0}) =: \bar{I}, \end{displaymath}
which is an interval of length $2\bar{\delta}^{1/2}$. For slight technical convenience, we actually prefer that $|\bar{I}| = \bar{\delta}^{1/2}$, and this can be arranged: if $\bar{U}$ is replaced by its intersection with either the left or the right half of $\bar{I}$, then \eqref{form223} remains valid, and this new $\bar{U}$ fits inside an interval of length precisely $\bar{\delta}^{1/2}$. With this reduction, we assume that $\diam(\bar{U}) \leq |\bar{I}| = \bar{\delta}^{1/2}$.

At this point we pause and clarify the connection between the sets $\bar{U}$ and $U$ (where the latter was defined around \eqref{form219}, and, more importantly, was used to define the tubes $\mathcal{T}_{U}$ in \eqref{form142}). We claim that
\begin{equation}\label{form224} \bar{U} + y_{0}\theta_{0} \subset U_{2\bar{\delta}},  \end{equation}
where the right hand side refers to the $2\bar{\delta}$-neighbourhood of $U$. To see this, fix $\bar{x} \in \bar{U}$, and observe that $(\bar{x},y_{0}) \in T_{U}$ by definition. In other words $(\bar{x},y_{0}) \in \pi_{\theta_{0}}^{-1}(2I_{x})$ for some $x \in U$. Since $x \in I_{x}$, this implies that $|\bar{x} + \theta_{0}y_{0} - x| \leq 2\bar{\delta}$, and \eqref{form224} has been verified. This inclusion means that we may easily obtain (lower) bounds for the branching numbers $R_{s}^{1}$ associated to $U$ by studying the set $\bar{U}$ instead.

The following proposition will quantify that $\bar{U}$ has nearly $\alpha$-dimensional branching at almost all scales:

\begin{proposition}\label{branchProp} Let $\mu_{A}$ be an $(\alpha,C_{\alpha})$-regular measure on $\R$, $C_{\alpha} \geq 1$, and let $\delta = 2^{-mN}$ be a dyadic number, for some $m,N \geq 1$. Assume that $U \subset [0,1)$ is a Borel set with $\mu_{A}(U) \geq \delta^{\omega}$ for some positive parameter $\omega \in (0,1)$. Fix $\rho \in (0,1)$, and consider those \emph{good} scales $s \in \{0,\ldots,N - 1\}$ with the property
\begin{equation}\label{form227} \max_{I \in \mathcal{D}_{2^{-ms}}} N_{2^{-m(s + 1)}}(U \cap I) \geq 2^{(1 - \rho)\alpha m + 3}. \end{equation}
Then, the number of good scales is no smaller than
\begin{equation}\label{form228} \left(1 - \left[\tfrac{\omega}{\alpha \rho} + \tfrac{2C_{\alpha}}{\alpha \rho m}\right] \right)N. \end{equation}  \end{proposition}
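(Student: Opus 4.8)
The plan is a multiscale covering‑number comparison. Since $U$ carries a definite proportion $\mu_A(U)\ge\delta^{\omega}$ of the $\alpha$‑regular mass, it must branch at ``dimension essentially $\alpha-\omega$'' on average across the $N$ consecutive scale‑ratios $2^{-m}$ between $1$ and $\delta=2^{-mN}$; on the other hand the upper covering estimate built into the definition of an $(\alpha,C_\alpha)$‑regular measure caps the branching of $U$ at any single scale by roughly $2^{\alpha m}$. Hence only a few scales $s$ can fail \eqref{form227}, i.e.\ have branching as small as $\approx 2^{(1-\rho)\alpha m}$ \emph{everywhere}.

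First I would discard trivial cases: if the number in \eqref{form228} is non‑positive there is nothing to prove, so I assume $\tfrac{\omega}{\alpha\rho}+\tfrac{2C_\alpha}{\alpha\rho m}<1$, which in particular forces $\omega<\alpha\rho$ and makes $m$ exceed a suitable absolute multiple of $C_\alpha/(\alpha\rho)$ — all that is needed to keep the denominators occurring below positive. I would also replace $U$ by $U\cap K$ with $K:=\spt\mu_A$: this does not change $\mu_A(U)$, while each count $N_{2^{-m(s+1)}}(U\cap I)$ can only decrease, so the set of good scales for $U\cap K$ is contained in that for $U$, and it suffices to treat $U\subset K$. Put $D_s:=N_{2^{-ms}}(U)$ for $s=0,\dots,N$. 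Then $D_0=1$ because $U\subset[0,1)$, while the $(\alpha,C_\alpha)$‑Frostman property gives
\[
D_N=N_\delta(U)\ \ge\ \frac{\mu_A(U)}{C_\alpha\delta^{\alpha}}\ \ge\ C_\alpha^{-1}\,2^{mN(\alpha-\omega)}.
\]

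The two per‑scale inequalities are the heart of the argument. For \emph{every} $s$, cover $U$ by $D_s$ balls of radius $2^{-ms}$; each such ball meets $K$ in a set coverable by at most $C_\alpha 2^{m\alpha}$ balls of radius $2^{-m(s+1)}$ by the upper covering bound (property (2) in the definition of $(\alpha,C_\alpha)$‑regular measures), so $D_{s+1}\le C\,C_\alpha 2^{m\alpha}D_s$ with $C$ absolute. For a \emph{bad} scale $s$ — one failing \eqref{form227}, so that $N_{2^{-m(s+1)}}(U\cap I)<2^{(1-\rho)\alpha m+3}$ for \emph{all} $I\in\mathcal{D}_{2^{-ms}}$ — the same ball‑by‑ball count, using that a radius‑$2^{-ms}$ ball is covered by boundedly many intervals of $\mathcal{D}_{2^{-ms}}$, gives $D_{s+1}\le C\,2^{(1-\rho)\alpha m+3}D_s$. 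The ``$+3$'' in \eqref{form227} and the absolute constants here are precisely what make the passage between ball‑covering numbers and dyadic intervals harmless, so I would not track them further.

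Finally I would telescope. Writing $g$ for the number of good scales in $\{0,\dots,N-1\}$ and $b=N-g$ for the number of bad ones, multiplying the per‑scale bounds gives
\[
C_\alpha^{-1}\,2^{mN(\alpha-\omega)}\ \le\ D_N\ =\ D_0\prod_{s=0}^{N-1}\frac{D_{s+1}}{D_s}\ \le\ \bigl(C\,C_\alpha 2^{m\alpha}\bigr)^{g}\bigl(C\,2^{(1-\rho)\alpha m+3}\bigr)^{b},
\]
and taking $\log_2$ yields $mN(\alpha-\omega)-\log_2 C_\alpha\le g\bigl(\alpha m+O(\log C_\alpha)\bigr)+b\bigl((1-\rho)\alpha m+O(1)\bigr)$. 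Substituting $g=N-b$, the leading term $N\alpha m$ cancels, leaving $b\bigl(\rho\alpha m-O(\log C_\alpha)\bigr)\le mN\omega+O(N\log C_\alpha)$, whence in the assumed regime $m\gtrsim C_\alpha/(\alpha\rho)$ (where the bracket on the left is comparable to $\rho\alpha m$) one gets $b\le\tfrac{N\omega}{\alpha\rho}+\tfrac{O(N\log C_\alpha)}{\alpha\rho m}$; since $\log_2 C_\alpha\le C_\alpha$ this is exactly \eqref{form228} once the implicit constants are fixed. I expect the only genuine obstacle to be conceptual rather than computational: one must notice that the naive good‑scale estimate $D_{s+1}\le 2^m D_s$ is far too lossy — it would leave an unwanted $N\alpha m$ on the right‑hand side — so the upper regularity bound (property (2)) has to be fed into \emph{all} scales, with the bad scales contributing only the true improvement $\rho\alpha m$ to the ``branching budget''. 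Everything else is routine bookkeeping of absolute constants and of the degenerate small‑$m$ range.
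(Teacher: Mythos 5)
Your proposal is correct and establishes the proposition, but it takes a genuinely different (if closely related) route from the paper. The paper discretises $U$ to a $\delta$-net, puts the uniform probability measure $\bar{\mu}$ on the net points, and runs an entropy argument: the lower bound on $H(\bar{\mu},\mathcal{D}_\delta)$ comes from the Frostman estimate, the chain rule $H(\bar{\mu},\mathcal{D}_\delta)=\sum_s H(\bar{\mu},\mathcal{D}_{2^{-m(s+1)}}\mid\mathcal{D}_{2^{-ms}})$ decomposes the budget across scales, the uniform upper bound $\alpha m + C_\alpha$ on each conditional entropy comes from the covering half of $(\alpha,C_\alpha)$-regularity, and a scale is declared bad when its conditional entropy drops below $(1-\rho)\alpha m + 3$ (which, by the ``entropy $\le\log$ of cardinality'' inequality, is exactly the negation of \eqref{form227} up to harmless constants). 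You dispense with entropy entirely and instead telescope covering numbers: $D_N=D_0\prod(D_{s+1}/D_s)$, with the lower bound on $D_N$ from Frostman, the uniform upper bound $D_{s+1}/D_s\lesssim C_\alpha 2^{\alpha m}$ from the covering half of regularity, and the improved bound $D_{s+1}/D_s\lesssim 2^{(1-\rho)\alpha m+3}$ at bad scales. These are isomorphic after taking $\log_2$ of your product identity — the conditional entropy at each scale plays the role of $\log_2(D_{s+1}/D_s)$ — but your version is more elementary (no entropy machinery, no auxiliary discrete measure $\bar{\mu}$), at the modest cost of carrying the ball-versus-dyadic-interval comparability constants explicitly. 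Your preliminary reduction $U\mapsto U\cap\spt\mu_A$ is also cleaner than the paper's passage to a dyadic cover, since it is literally a subset and therefore \eqref{form227} transfers trivially.

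One small caveat, which affects you and the paper equally: the constant $2C_\alpha$ in \eqref{form228} appears to be a typo. The paper's own computation yields $\lambda\le\omega/(\alpha\rho)+5C_\alpha/(\alpha\rho m)$, and the downstream application in Lemma~\ref{lemma11} indeed uses $5C_\alpha$, not $2C_\alpha$. Your bookkeeping produces a constant of the same order, and your remark that ``this is exactly \eqref{form228} once the implicit constants are fixed'' is the right attitude; just be aware that, as literally stated, the constant~$2$ is not reached by either argument, and that this has no bearing on the rest of the paper.
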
 

\begin{proof} We start by covering $U$ by a collection $\mathcal{U} \subset \mathcal{D}_{\delta}$ of dyadic intervals, and we discard those intervals which do not intersect $\spt \mu_{A}$. The union of the remaining intervals has the same $\mu_{A}$ measure as $U$, and it also suffices to prove \eqref{form227} for this smaller set. So, we assume with no loss of generality that every interval $I \in \mathcal{U}$ intersects $\spt \mu_{A}$. This allows us to pick one point $x_{I} \in I \cap \spt \mu_{A} \subset U \cap \spt \mu_{A}$ for each $I \in \mathcal{U}$, and write $\bar{\mu}$ for the uniformly distributed probability measure on $\{x_{I} : I \in \mathcal{U}\}$. Since $\mu_{A}$ was assumed to be $(\alpha,C_{\alpha})$-regular, and $\mu(U) \geq \delta^{\omega}$, we have $|\mathcal{U}| \geq C_{\alpha}^{-1} \cdot \delta^{\omega - \alpha}$. Therefore
\begin{equation}\label{form261} H(\bar{\mu},\mathcal{D}_{\delta}) \geq \log (C_{\alpha}^{-1} \cdot \delta^{\omega - \alpha}) = (\alpha - \omega)mN - \log C_{\alpha} \geq (\alpha - \omega)mN - C_{\alpha}. \end{equation}
We then decompose the $\delta$-entropy of $\bar{\mu}$ by repeated applications of \eqref{entropy} (note that $H(\bar{\mu},\mathcal{D}_{0}) = 0$, since $\spt \bar{\mu} \subset [0,1)$):
\begin{align*} (\alpha - \omega)mN - C_{\alpha} \leq H(\bar{\mu},\mathcal{D}_{\delta}) & = \sum_{s = 0}^{N - 1} H(\bar{\mu},\mathcal{D}_{2^{-m(s + 1)}} \mid \mathcal{D}_{2^{-ms}})\\
& = \sum_{s = 0}^{N - 1} \sum_{I \in \mathcal{D}_{2^{-ms}}} \bar{\mu}(I) \cdot H(\bar{\mu}_{I},\mathcal{D}_{2^{-m(s + 1)}}).  \end{align*}
Here $\bar{\mu}_{I} = \bar{\mu}(I)^{-1}\bar{\mu}|_{I}$ as usual. In particular, $\bar{\mu}_{I}$ is a probability measure supported on $I \cap \spt \mu_{A}$ (no closure is needed, since $\bar{\mu}$ is a discrete measure). To make further progress, we derive a uniform entropy upper bound from the $(\alpha,C_{\alpha})$-regularity of $\mu_{A}$:
\begin{displaymath} H(\bar{\mu}_{I},\mathcal{D}_{2^{-m(s + 1)}}) \leq \log N_{2^{-m(s + 1)}}(I \cap \spt \mu_{A}) \leq \log (C_{\alpha} \cdot 2^{\alpha m}) \leq \alpha m + C_{\alpha}, \quad I \in \mathcal{D}_{2^{-ms}}. \end{displaymath}
As a consequence of the preceding inequality, we also see that
\begin{equation}\label{form225} H(\bar{\mu},\mathcal{D}_{2^{-m(s + 1)}} \mid \mathcal{D}_{2^{-ms}}) \leq \alpha m + C_{\alpha}, \qquad 0 \leq s \leq N - 1. \end{equation} 
Now, a scale index $s \in \{0,\ldots,N - 1\}$ is called \emph{bad} if 
\begin{displaymath} H(\bar{\mu},\mathcal{D}_{2^{-m(s + 1)}} \mid \mathcal{D}_{2^{-ms}}) \leq (1 - \rho)\alpha m + 3, \end{displaymath}
and \emph{good} otherwise. Note that if an index $s \in \{0,\ldots,N - 1\}$ is good, then in particular $H(\bar{\mu}_{I},\mathcal{D}_{2^{-m(s + 1)}}) \geq (1 - \rho)\alpha m + 3$ for some interval $I \in \mathcal{D}_{2^{-ms}}$, which implies \eqref{form227}. So, it remains to show that there are not too many bad scales. We observe that
\begin{align}\label{form226} (\alpha - \omega)mN - C_{\alpha} \stackrel{\eqref{form261}}{\leq} H(\bar{\mu},\mathcal{D}_{\delta}) & = \sum_{s = 0}^{N - 1} H(\bar{\mu},\mathcal{D}_{2^{-m(s + 1)}} \mid \mathcal{D}_{2^{-ms}})\\
& = \sum_{s \,\mathrm{ bad}} \ldots + \sum_{s \,\mathrm{ good}} \ldots. \notag \end{align}
Let $\lambda N$ be the number of bad scales, so the number of good scales is $(1 - \lambda)N$. With this notation, and using \eqref{form225}, the right hand side is bounded from above by
\begin{displaymath} \lambda N\cdot [(1 - \rho)\alpha m + 3] + (1 - \lambda)N \cdot [\alpha m + C_{\alpha}] \leq (1 - \lambda \rho) \alpha mN + (3 + C_{\alpha})N. \end{displaymath}
Comparing this upper bound with the lower bound in \eqref{form226}, we obtain
\begin{displaymath} (1 - \lambda \rho) \alpha mN \geq (\alpha - \omega)mN - (3 + C_{\alpha})N - C_{\alpha} \geq (\alpha - \omega)mN - 5C_{\alpha}N, \end{displaymath}
or in other words
\begin{displaymath} \lambda \leq \left[\tfrac{\omega}{\alpha \rho} + \tfrac{5C_{\alpha}}{\alpha \rho m}\right]. \end{displaymath}
Since the number of good scales is $(1 - \lambda)N$, we obtain \eqref{form228}. \end{proof}

We can then prove Lemma \ref{lemma11}, or in other words \eqref{form230}-\eqref{form217}. Recall from \eqref{form223} that $\bar{U} \subset \bar{I}$ satisfied $\mu_{A}(\bar{U}) \geq (\bar{\delta}^{1/2})^{\alpha + 27\bar{\zeta} + 4\epsilon} =: (\bar{\delta}^{1/2})^{\alpha + \omega}$, where
\begin{displaymath} \omega := 27\bar{\zeta} + 4\epsilon. \end{displaymath} 
Since $\bar{I}$ is an interval of length $\bar{\delta}^{1/2}$, the rescaled and re-normalised measure 
\begin{displaymath} \mu_{\bar{I}} := (\bar{\delta}^{1/2})^{-\alpha} \cdot T_{\bar{I}}\mu_{A} \end{displaymath}
is an $(\alpha,C_{\alpha})$-regular measure, and $T_{\bar{I}}(\bar{U}) \subset [0,1]$ is a Borel set with $\mu_{\bar{I}}(T_{\bar{I}}(\bar{U})) \geq (\bar{\delta}^{1/2})^{\omega}$. Therefore, Proposition \ref{branchProp} can be applied to $\mu_{\bar{I}}$ and the set $T_{\bar{I}}(\bar{U})$. The scale "$\delta$" at which the proposition is applied is now $\bar{\delta}^{1/2}$, which in our notation (namely $\bar{\delta} = 2^{-mN}$) can be written as
\begin{displaymath} \bar{\delta}^{1/2} = 2^{-m \cdot (N/2)}. \end{displaymath}
Now, the conclusion \eqref{form227} of Proposition \ref{branchProp} would literally say something about the intersections of $T_{\bar{I}}(\bar{U})$ with dyadic intervals of lengths between $\bar{\delta}^{1/2}$ and $1$. In the following, we already translate this information back to $\bar{U}$, and scales between $\bar{\delta}$ and $\bar{\delta}^{1/2}$ (these correspond to indices $s \in \{N/2,\ldots,N - 1\}$): there exists a family of \emph{good indices} $\mathcal{G} \subset \{N/2,\ldots,N - 1\}$ such that
\begin{equation}\label{form229} \max_{I \in \mathcal{D}_{2^{-ms}}} N_{2^{-(s + 1)m}}(\bar{U} \cap I) \geq 2^{(1 - \rho)\alpha m + 3}, \qquad s \in \mathcal{G}, \end{equation} 
and 
\begin{displaymath} |\mathcal{G}| \stackrel{\eqref{form228}}{\geq} \left(1 - \left[\tfrac{27\bar{\zeta} + 4\epsilon}{\alpha \rho} + \tfrac{5C_{\alpha}}{\alpha \rho m}\right] \right) \cdot \tfrac{N}{2}. \end{displaymath} 
This is the lower bound we claimed in \eqref{form230}. Finally, a combination of \eqref{form229} and the inclusion \eqref{form224} finally allows us to estimate from below the branching numbers $R_{s}^{1}$. Assume that $s \in \mathcal{G} \subset \{N/2,\ldots,N - 1\}$, so that \eqref{form229} holds, and let $I \in \mathcal{D}_{sm}$ be some dyadic interval with 
\begin{align*} N_{2^{-m(s + 1)}}(U_{2\bar{\delta}} \cap (I + y_{0}\theta_{0})) & \stackrel{\eqref{form224}}{\geq} N_{2^{-m(s + 1)}}((\bar{U} + y_{0}\theta_{0}) \cap (I + y_{0}\theta_{0}))\\
& \,\,\, = \,\, N_{2^{-m(s + 1)}}(\bar{U} \cap I) \geq 2^{(1 - \rho)\alpha m + 3}. \end{align*} 
Since $2^{-m(s + 1)} \geq \bar{\delta}$, this implies that
\begin{displaymath}  \max_{I' \in \mathcal{D}_{2^{-ms}}} N_{2^{-m(s + 1)}}(U \cap I') \geq 2^{(1 - \rho)\alpha m}. \end{displaymath}
The left hand side is a lower bound for $R_{s}^{1}$, by the definition of these branching numbers. This proves \eqref{form217} for all $s \in \mathcal{G}$, and hence completes the proof of Proposition \ref{mainProp}.

\section{Proof of Proposition \ref{prop3}}\label{s:prop2}

We repeat the statement:

\begin{proposition}\label{prop2} Let $\theta \in [0,1]$, and let $1 \leq M \leq N < \infty$ be constants, let $0 < r \leq R \leq 1$, and let $\mu$ be a $(\gamma,C_{\gamma})$-regular measure with $\gamma \in [0,2]$, $C_{\gamma} > 0$, and $K := \spt \mu \subset \R^{2}$. Abbreviate $\mu_{s} := \mu|_{B(s)}$ for $s > 0$. Then, there exist absolute constants $c,C > 0$ such that
\begin{equation}\label{form27} \mu_{1}(H_{\theta}(CN,[r,1])) \leq \mu_{1}(H_{\theta}(cM,[4R,5])) + CC_{\gamma}^{2} \cdot \mu_{4}(H_{\theta}(c\tfrac{N}{M},[4r,7R])).  \end{equation} 
\end{proposition}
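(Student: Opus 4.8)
The statement is a ``multi-scale to two-scale'' decomposition of the high multiplicity set, and the natural strategy is a pigeonholing argument on intermediate scales combined with the elementary covering lemmas already established (Lemmas \ref{lemma6} and \ref{lemma7}). Fix $x \in H_{\theta}(CN,[r,1])$. Unwinding the definition, the line $\ell_{x} := \pi_{\theta}^{-1}\{\pi_{\theta}(x)\}$ meets $K_{r} \cap B(x,1)$ in a set requiring $\geq CN$ discs of radius $r$ to cover. The key idea is to split this covering number into the contribution coming from \emph{inside} the ball $B(x,R)$ and the contribution coming from the \emph{annulus} $B(x,1) \setminus B(x,R)$; one of the two must carry a large share of the multiplicity. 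More precisely, I would argue: either $\m_{\theta}(x \mid [4r,7R]) \gtrsim N/M$, in which case (after a harmless inflation of radii via Lemma \ref{lemma6}) $x$ contributes to $H_\theta(c\tfrac{N}{M},[4r,7R])$, \emph{or} the multiplicity picked up at scales $\geq R$ forces $\m_\theta(x \mid [4R,5]) \gtrsim M$, so $x$ contributes to $H_\theta(cM,[4R,5])$. The factor $C_\gamma^2$ in front of the $\mu_4$-term, and the enlargement of $B(1)$ to $B(4)$, are the price of passing from the pointwise inclusion to a measure estimate: a single point $x$ near the edge of $B(1)$ only ``sees'' high multiplicity inside $B(x,R)$, which may stick out of $B(1)$, so one works with $B(4)$ and uses $(\gamma,C_\gamma)$-regularity to sum a bounded-overlap cover.

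The cleanest way to organise the measure estimate, I think, is to cover $B(1) \cap H_\theta(CN,[r,1])$ by a minimal family of $R$-tubes $\mathbf{T} = \pi_\theta^{-1}(I)$, $|I| = R$ (there are $\lesssim C_\gamma R^{-1}$ of them hitting $B(2)$), and inside each such tube further cover $B(1) \cap \mathbf{T} \cap K$ by discs of radius $R$. Within one disc $B = B(z,R)$, rescaling by $T_{B}$ and applying Lemma \ref{lemma7} reduces the ``small-scale'' part of the multiplicity to the unit scale, where one can run a dichotomy: if the rescaled point has unit-scale multiplicity $\gtrsim N/M$ relative to $T_B(K)$, pull this back via Lemma \ref{lemma7} to membership in $H_\theta(c\tfrac{N}{M},[4r,7R])$ (the constants $4$ and $7$ absorb the rescaling distortion and the fact that $B$ is centred at $z$, not at $x$); otherwise the remaining multiplicity $\gtrsim M$ must be realised at scales between $R$ and $1$, i.e. by \emph{distinct} tubes $\mathbf{T}' \ni$ pieces of $\ell_x \cap K_r$, which is exactly the statement that $x \in H_\theta(cM,[4R,5])$ after one more application of Lemma \ref{lemma6} to move from radius $R$ up to radius $4R$ and from $1$ to $5$. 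Summing $\mu$ over the bounded-overlap cover, with $|\mathcal{K}_R| \lesssim C_\gamma$ tubes and $(\gamma,C_\gamma)$-regularity controlling $\mu(B)$ versus $\mu(T_B^{-1}(\cdot))$, produces the constant $CC_\gamma^2$.

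I expect the main obstacle to be bookkeeping the geometry precisely enough that the ``dichotomy'' is genuinely exhaustive: one must check that the $4r$, $7R$, $4R$, $5$ radii (as opposed to $r$, $R$, $1$) really do leave enough room. The subtle point is that covering $\ell_x \cap K_r \cap B(x,1)$ by $r$-discs and then grouping those discs according to which $R$-disc of a fixed cover of $K$ they fall into is not quite a clean product: a single $r$-disc can straddle two $R$-discs, and the $R$-disc containing the bulk of $\ell_x$'s intersection need not be centred on $\ell_x$. This is why the statement is phrased with $B(x,R)$ replaced by discs $B$ that merely \emph{meet} $\mathbf{T}$, and why radii are inflated. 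A careful proof will fix, for each $x$, a maximal $r$-separated subset of $\ell_x \cap K_r \cap B(x,1)$, sort its $\geq CN$ points by the $R$-disc cover, and observe: if $\geq cN$ of them land in a single $R$-disc $B$ then $x$'s multiplicity at scales $[4r,7R]$ is $\gtrsim N$ (much more than $N/M$), while if they spread over $\geq M$ distinct $R$-discs then there are $\gtrsim M$ distinct $R$-tubes through points of $\ell_x \cap K_r$, giving membership in $H_\theta(cM,[4R,5])$; the intermediate regime (fewer than $M$ discs, none with $\geq cN/M$ points) is impossible by pigeonholing $CN$ points into $< M$ boxes. Once this counting is pinned down, the passage to the measure inequality is routine via Lemmas \ref{lemma6}--\ref{lemma7} and regularity, and I would not belabour it.
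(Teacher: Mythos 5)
Your proposal reduces, after the pigeonholing at the end, to a \emph{pointwise} dichotomy: for every $x \in B(1) \cap H_{\theta}(CN,[r,1])$, either $x \in H_{\theta}(cM,[4R,5])$ or $x \in H_{\theta}(c\tfrac{N}{M},[4r,7R])$. But that inclusion is false, and the shape of the inequality already tells you so: a pointwise inclusion would give $\mu_{1}(\cdots) \leq \mu_{1}(\cdots) + \mu_{1}(\cdots)$ with constant $1$ in front of the last term, whereas the proposition has $\mu_{4}$ in place of $\mu_{1}$ and an unavoidable factor $CC_{\gamma}^{2}$.

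Here is the concrete failure of the ``concentration'' branch. Suppose $R \ll 1$ and the $\gtrsim CN$ many $r$-separated points of $\ell_{x} \cap K_{r} \cap B(x,1)$ all sit inside a single $R$-ball $B = B(z,R)$ with $z \in K$ and $|x - z| \approx \tfrac{1}{2}$. Then on the one hand $\m_{\theta}(x \mid [4R,5])$ is $O(1)$ (the clump occupies essentially one $4R$-disc), so $x \notin H_{\theta}(cM,[4R,5])$ once $M$ is moderately large; and on the other hand $B(z,4R) \not\subset B(x,7R)$ because $|x-z| \gg 7R$, so the clump is invisible to $\m_{\theta}(x \mid [4r,7R]) = N_{4r}(B(x,7R) \cap K_{4r} \cap \ell_{x})$ and $x \notin H_{\theta}(c\tfrac{N}{M},[4r,7R])$ either. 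The sentence ``$x$'s multiplicity at scales $[4r,7R]$ is $\gtrsim N$'' is exactly where this goes wrong: the multiplicity statistic is centred at $x$, not at the clump, and your inflation by constants $4$ and $7$ cannot bridge a gap of order $1$ when $R$ is small.

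The proposition is really saying something subtler: when a clump of $\gtrsim N/M$ many $r$-discs sits inside some $3B_{R}$, it is the \emph{points $y$ of $K$ near that clump} (not $x$) that belong to $H_{\theta}(c\tfrac{N}{M},[4r,7R])$, since for such $y$ one does have $4B_{R} \subset B(y,7R)$. The paper's argument never argues per point $x$. It works with the line-multiplicity functions $F_{r}(t)$, $F_{R}(t)$, $\widetilde{F}_{R}(t)$ depending only on $t = \pi_{\theta}(x)$, transfers $\mu_{1}(H_{\theta}(CN,[r,1]) \setminus H_{\theta}(cM,[4R,5]))$ into an integral against the push-forward $\mu_{\theta} = \pi_{\theta}\mu_{1}$, and on each $r$-interval $I$ uses $(\gamma,C_{\gamma})$-regularity twice: once to bound $\mu_{\theta}(I) \lesssim C_{\gamma}r^{\gamma}\sup_{3I} F_{r}$ from above, and once to bound the $\mu$-measure of the clump inside $3B_{R}$ from below by $\gtrsim r^{\gamma}C_{\gamma}^{-1}F_{r}(B_{R})(t_{1})$. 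Multiplying these two gives the $C_{\gamma}^{2}$, and the clump lives in $3B_{R} \subset B(4)$, which produces $\mu_{4}$. That transfer of mass from the fibre of $x$ to the location of the clump is the content of the lemma and is the ingredient your proposal is missing; the rest of your outline (tube covers, $R$-disc covers, Lemmas \ref{lemma6}--\ref{lemma7}) is compatible with it, so the fix is to abandon the per-point dichotomy and argue on the line as the paper does.
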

Here we abbreviated $H_{\theta}(K,M,[r,R]) =: H_{\theta}(M,[r,R])$. During the proof, we will also abbreviate $\m_{K,\theta} =: \m_{\theta}$. These notions were introduced in Definitions \ref{def:mult} and \ref{def:highMult}.

\begin{proof}[Proof of Proposition \ref{prop2}] For $t \in \R$, write 
\begin{equation}\label{form143} F_{r}(t) := N_{r}(B(2) \cap K_{r} \cap \pi_{\theta}^{-1}\{t\}) \quad \text{and} \quad F_{R}(t) := N_{R}(B(3) \cap K_{R} \cap \pi_{\theta}^{-1}\{t\}), \end{equation}
where $K_{s}$ refers to the $s$-neighbourhood of $K$. We also define the following variant of $F_{R}$:
\begin{displaymath} \widetilde{F}_{R}(t) := N_{4R}(B(4) \cap K_{4R} \cap \pi_{\theta}^{-1}\{t\}), \qquad t \in \R. \end{displaymath}
The first point to observe about the definition fo $F_{r}$ is that if $x \in B(1)$, then $B(x,1) \subset B(2)$, and hence
\begin{displaymath} \m_{\theta}(x \mid [r,1]) = N_{r}(B(x,1) \cap K_{r} \cap \pi_{\theta}^{-1}\{\pi_{\theta}(x)\}) \leq F_{r}(\pi_{\theta}(x)). \end{displaymath}
In particular,
\begin{equation}\label{form41} x \in B(1) \cap H_{\theta}(CN,[r,1]) \quad \Longrightarrow \quad F_{r}(\pi_{\theta}(x)) \geq CN. \end{equation}
Similarly, 
\begin{equation}\label{form42} x \in B(1) \, \setminus \, H_{\theta}(cM,[4R,5]) \quad \Longrightarrow \quad \widetilde{F}_{R}(\pi_{\theta}(x)) \leq cM, \end{equation}
because if $x \in B(1)$, then $B(4) \subset B(x,5)$, and hence
\begin{displaymath} \widetilde{F}_{R}(\pi_{\theta}(x)) = N_{4R}(B(4) \cap K_{4R} \cap \pi_{\theta}^{-1}\{\pi_{\theta}(x)\}) \leq \m_{\theta}(x \mid [4R,5]). \end{displaymath}
There is also a useful relationship between $F_{r}(t)$ and the push-forward measure $\mu_{\theta} := \pi_{\theta}\mu_{1}$, which reads as follows: if $I \subset \R$ is an interval of length $r$, then 
\begin{equation}\label{form11} \mu_{\theta}(I) \lesssim C_{\gamma}r^{\gamma} \cdot \sup_{t \in 3I} F_{r}(t). \end{equation} 
Indeed, by the $(\gamma,C_{\gamma})$-regularity of $\mu$, an upper bound for $\mu_{\theta}(I)$ is given by $4C_{\gamma}r^{\gamma} \cdot n$, where "$n$" is the largest number of disjoint $r$-discs centred at $B(1) \cap K \cap \pi_{\theta}^{-1}(I)$. All of these $r$-discs are contained in $B(2) \cap K_{r} \cap \pi_{\theta}^{-1}(3I)$. Now, the average line $\pi_{\theta}^{-1}\{t\}$, with $t \in 3I$, meets $\geq n/3$ of these discs, since the probability of hitting each disc individually is $\tfrac{1}{3}$. Hence, for some $t \in 3I$, it holds
\begin{displaymath} F_{r}(t) = N_{r}(B(2) \cap K_{r} \cap \pi_{\theta}^{-1}\{t\}) \gtrsim n. \end{displaymath}
This proves \eqref{form11}. 

Let $\mathcal{K}_{R}$ be a boundedly overlapping cover of $K_{r} \cap B(2)$ by discs of radius $R$, centred at $K$. Thus $B_{R} \subset K_{R} \cap B(3)$ for all $B_{R} \in \mathcal{K}_{R}$. For $B_{R} \in \mathcal{K}_{R}$ fixed, we define
\begin{displaymath} F_{r}(B_{R})(t) := N_{r}(B_{R} \cap K_{r} \cap \pi_{\theta}^{-1}\{t\}) \quad \text{and} \quad \widetilde{F}_{r}(B_{R})(t) := N_{4r}(4B_{R} \cap K_{4r} \cap \pi_{\theta}^{-1}\{t\}). \end{displaymath}
We claim the following inequality for every $t \in \R$:
\begin{equation}\label{form8} F_{r}(t) \leq C_{1}\sum_{B_{R} \in \mathcal{K}_{R}} F_{r}(B_{R})(t), \end{equation} 
where $C_{1} > 0$ is an absolute constant. This inequality means that an upper bound for the $r$-discs intersecting $\pi_{\theta}^{-1}\{t\}$ can be obtained by finding an upper bound on both $R$-discs intersecting $\pi_{\theta}^{-1}\{t\}$, and an upper bound for $r$-discs intersecting $\pi_{\theta}^{-1}\{t\}$ inside any given $R$-disc. To prove \eqref{form8}, fix $t \in \R$, and let $\{x_{1},\ldots,x_{m}\}$ be a maximal $r$-separated subset of $B(2) \cap K_{r} \cap \pi_{\theta}^{-1}\{t\}$, so that $F_{r}(t) \lesssim m$. For every $1 \leq j \leq m$, the point $x_{j} \in K_{r}$ lies in $B_{R} \cap K_{r} \cap \pi_{\theta}^{-1}\{t\}$ for some $B_{R} \in \mathcal{K}_{R}$. Consequently,
\begin{displaymath} F_{r}(t) \lesssim m \leq \sum_{B_{R} \in \mathcal{K}_{R}} |\{1 \leq j \leq m : x_{j} \in B_{R} \cap K_{r} \cap \pi_{\theta}^{-1}\{t\}\}| \lesssim \sum_{B_{R} \in \mathcal{K}_{R}} F_{r}(B_{R})(t), \end{displaymath}
where the final inequality used the $r$-separation of the points $x_{j}$. This proves \eqref{form8}.

Next we claim that, for every $t \in \R$,
\begin{equation}\label{form9} |\{B_{R} \in \mathcal{K}_{R} : F_{r}(B_{R})(t) \neq 0\}| \leq C_{2}F_{R}(t), \end{equation}
where $C_{2} > 0$ is another absolute constant. Indeed, for every $B_{R} \in \mathcal{K}_{R}$ with $F_{r}(B_{R})(t) \neq 0$, there exists $x_{B_{R}} \in B_{R} \cap K_{r} \cap \pi_{\theta}^{-1}\{t\} \subset B(3) \cap K_{R} \cap \pi_{\theta}^{-1}\{t\}$. Since the discs $B_{R}$ have bounded overlap, the points $x_{B_{R}}$ obtained this way are essentially $R$-separated (more precisely: contain an $R$-separated subset of comparable cardinality), and consequently
\begin{displaymath} F_{R}(t) = N_{R}(B(3) \cap K_{R} \cap \pi_{\theta}^{-1}\{t\}) \gtrsim |\{B_{R} \in \mathcal{K}_{R} : F_{r}(B_{R})(t) \neq \emptyset\}|, \end{displaymath}
as stated in \eqref{form9}.

We combine \eqref{form8} and \eqref{form9} to reach the following useful inequality, for any $H \geq 1$:
\begin{displaymath} F_{r}(t) \leq C_{1} \mathop{\sum_{B_{R} \in \mathcal{K}_{R}}}_{F_{r}(B_{R})(t) \geq H} F_{r}(B_{R})(t) + C_{1}C_{2}HF_{R}(t), \qquad t \in \R. \end{displaymath}
In particular, choosing $H := N/M$ and $C \geq 2C_{1}C_{2}$ (this "$C$" is the absolute constant referred to in the statement of the proposition), we find that if $F_{R}(t) \leq M$ for some $t \in \R$, then
\begin{displaymath} F_{r}(t) \leq C_{1} \mathop{\sum_{B_{R} \in \mathcal{K}_{R}}}_{F_{r}(B_{R})(t) \geq N/M} F_{r}(B_{R})(t) + CN/2. \end{displaymath} 
In particular, we derive the following key observation, again valid for any $t \in \R$:
\begin{equation}\label{form10} F_{r}(t) \geq CN \text{ and } F_{R}(t) \leq M \quad \Longrightarrow \quad F_{r}(t) \leq 2C_{1} \mathop{\sum_{B_{R} \in \mathcal{K}_{R}}}_{F_{r}(B_{R})(t) \geq N/M} F_{r}(B_{R})(t). \end{equation} 
To apply \eqref{form10}, we start by making the "trivial" estimate
\begin{displaymath} \mu_{1}(H_{\theta}(CN,[r,1])) \leq \mu_{1}(H_{\theta}(cM,[4R,5])) + \mu_{1}(H_{\theta}(CN,[r,1]) \, \setminus \, H_{\theta}(cM,[4R,5])). \end{displaymath}
Consequently, \eqref{form27} will follow once we manage to prove that
\begin{equation}\label{form28} \mu_{1}(H_{\theta}(CN,[r,1]) \, \setminus \, H_{\theta}(cM,[4R,5])) \lesssim C_{\gamma}^{2} \cdot \mu_{4}(H_{\theta}(c\tfrac{N}{M},[4r,7R])). \end{equation} 
Let $x \in B(1) \cap H_{\theta}(CN,[r,1]) \, \setminus \, H_{\theta}(cM,[4R,5])$ be arbitrary. Then, we infer from \eqref{form41}-\eqref{form42} that
\begin{displaymath} F_{r}(\pi_{\theta}(x)) \geq CN \quad \text{and} \quad \widetilde{F}_{R}(\pi_{\theta}(x)) \leq cM. \end{displaymath}
In particular, the left hand side of \eqref{form28} satisfies
\begin{equation}\label{form45} \mu_{1}(H_{\theta}(CN,[r,1]) \, \setminus \, H_{\theta}(cM,[4R,5])) \leq \int \mathbf{1}_{\{F_{r}(t) \geq CN \text{ and } \widetilde{F}_{R}(t) \leq cM\}}(t) \, d\mu_{\theta}(t). \end{equation} 
Let $\mathcal{D}_{r}(\R)$ be the collection of dyadic intervals of $\R$ of length $r$. We decompose the integral on the right as 
\begin{displaymath} \sum_{I \in \mathcal{D}_{r}(\R)} \int_{I} \mathbf{1}_{\{t : F_{r}(t) \geq CN \text{ and } \widetilde{F}_{R}(t) \leq cM\}}(t) \, d\mu_{\theta}(t). \end{displaymath}
Fix $I \in \mathcal{D}_{r}(\R)$, and assume that there exists at least one point $t_{0} \in I$ with $F_{r}(t_{0}) \geq CN$ and $\widetilde{F}_{R}(t_{0}) \leq cM$ (otherwise the corresponding term is zero). For such an interval $I \in \mathcal{D}_{r}(\R)$, we simply apply the estimate \eqref{form11} to evaluate
\begin{equation}\label{form18} \int_{I} \mathbf{1}_{\{t : F_{r}(t) \geq CN \text{ and } \widetilde{F}_{R}(t) \leq cM\}}(t) \, d\mu_{\theta}(t) \leq \mu_{\theta}(I) \lesssim C_{\gamma}r^{\gamma} \cdot \sup_{t \in 3I} F_{r}(t). \end{equation}
Let $t_{1} \in 3I$ be a point which nearly attains the supremum on the right, say, up to a constant $2$, and moreover $F_{r}(t_{1}) \geq F_{r}(t_{0}) \geq CN$. 

Recall from \eqref{form143} that $F_{R}(t) := N_{R}(B(3) \cap K_{R} \cap \pi_{\theta}^{-1}\{t\})$. We claim that if the absolute constant $c > 0$ is chosen small enough, then $F_{R}(t_{1}) \leq M$. Indeed, let $\{x_{1},\ldots,x_{m}\} \subset B(3) \cap \pi_{\theta}^{-1}\{t_{1}\} \cap K_{R}$ be a maximal $R$-separated set, with $m \sim F_{R}(t_{1})$. Then, since $|t_{1} - t_{0}| \leq 2r$, the line $\pi_{\theta}^{-1}\{t_{0}\}$ intersects $B(5) \cap K_{4R}$ in $\gtrsim m$ points, which are $4R$-separated. Consequently 
\begin{displaymath} cM \geq \widetilde{F}_{R}(t_{0}) = N_{4R}(B(5) \cap K_{4R} \cap \pi_{\theta}^{-1}\{t_{0}\}) \gtrsim m \sim F_{R}(t_{1}), \end{displaymath}
and the claim follows. Therefore, $F_{r}(t_{1}) \geq CN$ and $F_{R}(t_{1}) \leq M$, and we are in a position to apply \eqref{form10} to the point $t_{1}$:
\begin{equation}\label{form17} F_{r}(t_{1}) \leq 2C_{1} \mathop{\sum_{B_{R} \in \mathcal{K}_{R}}}_{F_{r}(B_{R})(t_{1}) \geq N/M} F_{r}(B_{R})(t_{1}). \end{equation} 
We next claim that if $B_{R} \in \mathcal{K}_{R}$ is one of the discs appearing in the sum in \eqref{form17}, that is, $F_{r}(B_{R})(t_{1}) \geq N/M$, then
\begin{equation}\label{form16} F_{r}(B_{R})(t_{1}) \lesssim \frac{C_{\gamma}}{r^{\gamma}}\int_{5I} \mathbf{1}_{\{s : \widetilde{F}_{r}(B_{R})(s) \geq cN/M\}}(s) \, d\pi_{\theta}(\mu|_{3B_{R}})(s). \end{equation} 
To prove this, we first claim that
\begin{equation}\label{form43} \pi_{\theta}(\mu|_{3B_{R}})([t_{1} - 2r,t_{1} + 2r]) \gtrsim \tfrac{r^{\gamma}}{C_{\gamma}} \cdot N_{r}(B_{R} \cap K_{r} \cap \pi_{\theta}^{-1}\{t_{1}\}) = \tfrac{r^{\gamma}}{C_{\gamma}} \cdot F_{r}(B_{R})(t_{1}). \end{equation}
To see this, note that every point $x \in B_{R} \cap K_{r} \cap \pi_{\theta}^{-1}\{t_{1}\}$ lies at distance $\leq r$ from a point in $2B_{R} \cap K \cap \pi_{\theta}^{-1}([t_{1} - r,t_{1} + r])$, and 
\begin{displaymath} B(x,r) \subset 3B_{R} \cap \pi_{\theta}^{-1}([t_{1} - 2r,t_{1} + 2r]), \end{displaymath}
since $r \leq R$. Hence, $\pi_{\theta}(\mu|_{3B_{R}})([t_{1} - 2r,t_{1} + 2r])$ exceeds, by a constant factor, the total $\mu$-measure of discs $B(x,r)$ obtained in this way. This measure is bounded from below by the right hand side of \eqref{form43}.

To deduce \eqref{form16} from \eqref{form43}, it remains to observe that
\begin{equation}\label{form249} [t_{1} - 2r,t_{1} + 2r] \subset \{s : \widetilde{F}_{r}(B_{R})(s) \geq cN/M\}, \end{equation}
assuming that the absolute constant $c > 0$ was chosen small enough. To see this, recall that
\begin{displaymath} \widetilde{F}_{r}(B_{R})(s) = N_{4r}(4B_{R} \cap K_{4r} \cap \pi_{\theta}^{-1}\{s\}). \end{displaymath}
The point is that since $F_{r}(B_{R})(t_{1}) \geq N/M$ (we are only considering these terms in \eqref{form17}), there exist $\gtrsim N/M$ points in $B_{R} \cap K_{r} \cap \pi_{\theta}^{-1}\{t_{1}\}$, which are $r$-separated. Now, if $s \in [t_{1} - 2r,t_{1} + 2r]$, then $\pi_{\theta}^{-1}\{s\}$ intersects $K_{4r}$ whenever $\pi_{\theta}^{-1}\{t_{1}\}$ intersects $K_{r}$, and these intersections occur inside $4B_{R}$. Hence $\widetilde{F}_{r}(B_{R})(s) \gtrsim F_{r}(B_{R})(t_{1}) \geq N/M$ for all $s \in [t_{1} - 2r,t_{1} + 2r]$. This completes the proof of \eqref{form249}, hence \eqref{form16}.

We record at this point that
\begin{equation}\label{form44} x \in 3B_{R} \text{ and } \widetilde{F}_{R}(B_{R})(\pi_{\theta}(x)) \geq cN/M \quad \Longrightarrow \quad x \in H_{\theta}(c\tfrac{N}{M},[4r,7R]). \end{equation}
This is so, because if $x \in 3B_{R}$, then $B(x,7R) \supset 4B_{R}$, and hence
\begin{displaymath} \m_{\theta}(x \mid [4r,7R]) \geq N_{4r}(4B_{R} \cap K_{4r} \cap \pi_{\theta}^{-1}\{\pi_{\theta}(x)\}) = \widetilde{F}_{r}(B_{R})(\pi_{\theta}(x)). \end{displaymath}
Combining \eqref{form18}-\eqref{form16}, we first learn that
\begin{displaymath} \int_{I} \mathbf{1}_{\{t : F_{r}(t) \geq CN \text{ and } \widetilde{F}_{R}(t) \leq cM\}}(t) \, d\mu_{\theta}(t) \lesssim C_{\gamma}^{2} \sum_{B_{R} \in \mathcal{K}_{R}} \int_{5I} \mathbf{1}_{\{s : \widetilde{F}_{r}(B_{R})(s) \geq cN/M\}}(s) \, d\pi_{\theta}(\mu|_{3B_{R}})(s).   \end{displaymath} 
Summing over $I \in \mathcal{D}_{r}(\R)$, using the bounded overlap of the intervals $5I$, applying \eqref{form44}, and finally using the bounded overlap of the discs $3B_{R} \subset B(4)$, $B_{R} \in \mathcal{K}_{R}$, we find that
\begin{align*} \int \mathbf{1}_{\{t : F_{r}(t) \geq CN \text{ and } \widetilde{F}_{R}(t) \leq cM\}}(t) \, d\mu_{\theta}(t) & \lesssim C_{\gamma}^{2} \sum_{B_{R} \in \mathcal{K}_{R}} \int_{\R} \mathbf{1}_{\{s : \widetilde{F}_{r}(B_{R})(s) \geq cN/M\}}(s) \, d\pi_{\theta}(\mu|_{3B_{R}})(s)\\
& \leq C_{\gamma}^{2} \sum_{B_{R} \in \mathcal{K}_{R}} \mu(3B_{R} \cap H_{\theta}(c\tfrac{N}{M},[4r,7R]))\\
& \lesssim C_{\gamma}^{2} \cdot \mu(B(4) \cap H_{\theta}(c\tfrac{N}{M},[4r,7R])). \end{align*} 
Recalling \eqref{form45}, this concludes the proof of \eqref{form28}, and the proof of the proposition. \end{proof}

\bibliographystyle{plain}
\bibliography{references}

\def\cprime{$'$}
\begin{thebibliography}{10}

\bibitem{MR1491854}
Stephen Astels.
\newblock Cantor sets and numbers with restricted partial quotients.
\newblock {\em Trans. Amer. Math. Soc.}, 352(1):133--170, 2000.

\bibitem{MR3955707}
Bal\'{a}zs B\'{a}r\'{a}ny, Michael Hochman, and Ariel Rapaport.
\newblock Hausdorff dimension of planar self-affine sets and measures.
\newblock {\em Invent. Math.}, 216(3):601--659, 2019.

\bibitem{Bo1}
Jean Bourgain.
\newblock On the {E}rd\"os-{V}olkmann and {K}atz-{T}ao ring conjectures.
\newblock {\em Geom. Funct. Anal.}, 13(2):334--365, 2003.

\bibitem{Bo2}
Jean Bourgain.
\newblock The discretized sum-product and projection theorems.
\newblock {\em J. Anal. Math.}, 112:193--236, 2010.

\bibitem{2020arXiv201202747C}
Laura {Cladek} and Terence {Tao}.
\newblock {Additive energy of regular measures in one and higher dimensions,
  and the fractal uncertainty principle}.
\newblock {\em arXiv e-prints}, page arXiv:2012.02747, December 2020.

\bibitem{MR3558305}
Semyon Dyatlov and Joshua Zahl.
\newblock Spectral gaps, additive energy, and a fractal uncertainty principle.
\newblock {\em Geom. Funct. Anal.}, 26(4):1011--1094, 2016.

\bibitem{MR3558544}
Kenneth Falconer and Tom Kempton.
\newblock The dimension of projections of self-affine sets and measures.
\newblock {\em Ann. Acad. Sci. Fenn. Math.}, 42(1):473--486, 2017.

\bibitem{MR4313239}
De-Jun Feng and Yu-Feng Wu.
\newblock On arithmetic sums of fractal sets in {$\Bbb R^d$}.
\newblock {\em J. Lond. Math. Soc. (2)}, 104(1):35--65, 2021.

\bibitem{Ho}
Michael Hochman.
\newblock On self-similar sets with overlaps and inverse theorems for entropy.
\newblock {\em Ann. of Math. (2)}, 180(2):773--822, 2014.

\bibitem{MR2912701}
Michael Hochman and Pablo Shmerkin.
\newblock Local entropy averages and projections of fractal measures.
\newblock {\em Ann. of Math. (2)}, 175(3):1001--1059, 2012.

\bibitem{Mar}
John~M. Marstrand.
\newblock Some fundamental geometrical properties of plane sets of fractional
  dimensions.
\newblock {\em Proc. London Math. Soc. (3)}, 4:257--302, 1954.

\bibitem{zbMATH01249699}
Pertti {Mattila}.
\newblock {\em {Geometry of sets and measures in Euclidean spaces. Fractals and
  rectifiability. 1st paperback ed.}}
\newblock Cambridge: Cambridge University Press, 1st paperback ed. edition,
  1999.

\bibitem{MR3503710}
Pertti Mattila and Tuomas Orponen.
\newblock Hausdorff dimension, intersections of projections and exceptional
  plane sections.
\newblock {\em Proc. Amer. Math. Soc.}, 144(8):3419--3430, 2016.

\bibitem{MR3590535}
Tuomas Orponen.
\newblock On the distance sets of {A}hlfors-{D}avid regular sets.
\newblock {\em Adv. Math.}, 307:1029--1045, 2017.

\bibitem{MR4055989}
Tuomas Orponen.
\newblock An improved bound on the packing dimension of {F}urstenberg sets in
  the plane.
\newblock {\em J. Eur. Math. Soc. (JEMS)}, 22(3):797--831, 2020.

\bibitem{MR4218963}
Tuomas Orponen.
\newblock On the {A}ssouad dimension of projections.
\newblock {\em Proc. Lond. Math. Soc. (3)}, 122(2):317--351, 2021.

\bibitem{2021arXiv211002779O}
Tuomas {Orponen}.
\newblock {On the discretised $ABC$ sum-product problem}.
\newblock {\em arXiv e-prints}, page arXiv:2110.02779, October 2021.

\bibitem{2018arXiv180109591O}
Tuomas {Orponen} and Laura {Venieri}.
\newblock {A note on expansion in prime fields}.
\newblock {\em arXiv e-prints}, page arXiv:1801.09591, January 2018.

\bibitem{MR2470633}
Yuval Peres and Pablo Shmerkin.
\newblock Resonance between {C}antor sets.
\newblock {\em Ergodic Theory Dynam. Systems}, 29(1):201--221, 2009.

\bibitem{MR4163999}
Eino Rossi and Pablo Shmerkin.
\newblock On measures that improve {$L^q$} dimension under convolution.
\newblock {\em Rev. Mat. Iberoam.}, 36(7):2217--2236, 2020.

\bibitem{MR2994996}
Tom Sanders.
\newblock The structure theory of set addition revisited.
\newblock {\em Bull. Amer. Math. Soc. (N.S.)}, 50(1):93--127, 2013.

\bibitem{Sh}
Pablo Shmerkin.
\newblock On {F}urstenberg's intersection conjecture, self-similar measures,
  and the {$L^q$} norms of convolutions.
\newblock {\em Ann. of Math. (2)}, 189(2):319--391, 2019.

\end{thebibliography}

\end{document}